\theoremstyle{plain}
\newtheorem{thm}{Theorem}[section]
\newtheorem{theorem}[thm]{Theorem}
\newtheorem*{theorem*}{Theorem}
\newtheorem{lemma}[thm]{Lemma}
\newtheorem{corollary}[thm]{Corollary}
\newtheorem{proposition}[thm]{Proposition}
\theoremstyle{definition}
\newtheorem{example}[thm]{Example}
\newtheorem{conjecture}[thm]{Conjecture}
\numberwithin{equation}{thm}
\title[Chevalley restriction theorem for orthogonal groups ]{A higher-dimensional Chevalley restriction theorem for orthogonal groups}
\author[]{Lei Song}
\address{School of Mathematics, Sun Yat-sen University\\No. 135 Xingang Xi Road, Guangzhou, Guangdong 510275, P. R. China}
\email{songlei3@mail.sysu.edu.cn}
\author[]{Xiaopeng Xia}
\address{School of Mathematical Sciences, University of Science and Technology of China\\ No. 96 Jinzhai Road, Hefei, Anhui 230026, P. R. China}
\email{xpxia@mail.ustc.edu.cn}
\author[]{Jinxing Xu}%
\address{School of Mathematical Sciences, University of Science and Technology of China\\ No. 96 Jinzhai Road, Hefei, Anhui 230026, P. R. China}
\email{xujx02@ustc.edu.cn}%
\date{}
\begin{document}
\subjclass[2020]{14L30 (primary), 20G05 (secondary).}
  \keywords{Chevalley Restriction Theorem, Invariant Theory, Commuting Scheme, Orthogonal Groups, Pfaffians}
 \begin{abstract}
 We prove a higher-dimensional Chevalley restriction theorem for orthogonal groups, which was conjectured by Chen and Ng\^{o} for reductive groups.  In characteristic $p>2$, we also prove a weaker statement. In characteristic $0$, the theorem implies that the categorical quotient of a commuting scheme by the diagonal adjoint action of the group is integral and normal. As applications, we deduce some trace identities and a certain multiplicative property of the Pfaffian over an arbitrary commutative algebra.
 \end{abstract}
  \maketitle

\section{Introduction}

Let $G$ be a reductive group over an algebraically closed field $\mathbb{K}$ with Lie algebra $\mathfrak{g}$. For an integer $d\geq 2$,  let  $\mathfrak{C}^d_{\mathfrak{g}}\subset \mathfrak{g}^d$ be the  commuting scheme, which is defined as the scheme-theoretic fiber of the commutator map over the zero
\[\mathfrak{g}^d\rightarrow \prod\limits_{i<j} \mathfrak{g}, \ \ (x_1,\cdots, x_d)\mapsto \prod_{i<j}[x_i, x_j].\] 
Its underlying variety (the reduced induced closed subscheme) $\mathfrak{C}^d_{\mathfrak{g},red}$ is called the commuting variety. As a set, $\mathfrak{C}^d_{\mathfrak{g},red}$ consists of $d$-tuples $(x_1,\cdots, x_d)\in \mathfrak{g}^d$ such that $[x_i,x_j]=0$, for all  $1\leq i,j\leq d$. It is a long-standing open question whether or not $\mathfrak{C}^d_{\mathfrak{g}}$ is reduced, that is $\mathfrak{C}^d_{\mathfrak{g}}=\mathfrak{C}^d_{\mathfrak{g},red}$.  When the characteristic of $\mathbb{K}$ ($\rm{char} \ \mathbb{K}$ for short) is zero, Charbonnel [\ref{Charbonnel}] recently claims a proof for $\mathfrak{C}^2_{\mathfrak{g}}=\mathfrak{C}^2_{\mathfrak{g},red}$. Although there is no adequate evidence to expect that $\mathfrak{C}^d_{\mathfrak{g}}$ is reduced for general $d$, one can study the categorical quotient $\mathfrak{C}^d_{\mathfrak{g}}/\!\!/G$ and ask the same question. Here $G$ acts on $\mathfrak{g}^d$ by the diagonal adjoint action, and the action leaves $\mathfrak{C}^d_{\mathfrak{g}}$ stable. 

Let $T$ be a maximal torus of $G$ and $\mathfrak{t}$ be the Lie algebra of $T$. Then the Weyl group $W:=N_G(T)/T$ acts on $\mathfrak{t}^d$ diagonally. The embedding $\mathfrak{t}^d\hookrightarrow \mathfrak{g}^d$ factors through $\mathfrak{C}^d_{\mathfrak{g}}$ and induces the natural morphism 
\[\Phi: \mathfrak{t}^d/\!\!/W \rightarrow \mathfrak{C}^d_{\mathfrak{g}}/\!\!/ G.\]
In studying the Hitchin morphism from the moduli stack of principal $G$-Higgs bundles on a proper smooth variety $X$ of dimension $d\ge 2$, Chen and Ng\^{o} \cite{Chen-Ngo} are led to

\begin{conjecture}[Chen-Ng\^{o}]\label{conj}
The morphism $\Phi: \mathfrak{t}^d/\!\!/W\rightarrow \mathfrak{C}^d_{\mathfrak{g}}/\!\!/G$
is an isomorphism. 
\end{conjecture}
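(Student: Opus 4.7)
The plan is to prove the isomorphism by reducing to a statement about the induced homomorphism $\Phi^*\colon \mathbb{K}[\mathfrak{C}^d_{\mathfrak{g}}]^G\to \mathbb{K}[\mathfrak{t}^d]^W$ on invariant rings. Injectivity of $\Phi^*$ I would deduce from the density of $G\cdot\mathfrak{t}^d$ in the irreducible component of $\mathfrak{C}^d_{\mathfrak{g},red}$ containing $\mathfrak{t}^d$: a commuting $d$-tuple with one regular semisimple component is simultaneously conjugate into $\mathfrak{t}$, so any $G$-invariant vanishing on $\mathfrak{t}^d$ must vanish on the dense semisimple locus and hence on the whole component. In characteristic zero one then extends this to the invariants of the full (possibly non-reduced) commuting scheme by a Jordan-decomposition deformation argument; in characteristic $p>2$ this step is more delicate, which is consistent with the paper's weaker statement in that setting.

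The surjectivity of $\Phi^*$ is the main content. I would approach it via the classical invariant theory of $\mathfrak{so}_n$-tuples. By the first fundamental theorem of invariant theory for $O(n)$, the ring $\mathbb{K}[(\mathfrak{so}_n)^d]^{O(n)}$ is generated by traces of words $\operatorname{tr}(A_{i_1}\cdots A_{i_k})$; for $SO(n)$ with $n$ even, one must additionally include Pfaffians of linear combinations of the $A_j$'s, regarded as polynomials in the combination coefficients. On a generic commuting tuple in $\mathfrak{C}^d_{\mathfrak{g}}$, simultaneous block-diagonalization into $2\times 2$ blocks indexed by Cartan coordinates is possible, and these traces and Pfaffians evaluate to explicit symmetric and signed-symmetric functions in those coordinates. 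Matching these against the well-known presentation of $\mathbb{K}[\mathfrak{t}^d]^W$ for the hyperoctahedral Weyl group---generated by symmetric functions in squared rows of the coordinate matrix, and, in type $D$, by a single product-of-coordinates element---would then yield surjectivity.

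The main obstacle, I expect, is the Pfaffian input in the $SO(2m)$ case. A $W$-invariant on $\mathfrak{t}^d$ involving the type-$D$ ``half-determinant'' generator is not obviously the restriction of any polynomial built from traces alone, and producing a $G$-invariant lift requires a genuine polynomial identity expressing the Pfaffian of a linear combination of commuting skew-symmetric matrices as a product of eigenvalue-like factors. Establishing such an identity, and, crucially, verifying that it holds over an arbitrary base ring so that it lifts from the reduced commuting variety to the scheme $\mathfrak{C}^d_{\mathfrak{g}}$, is the crux of the argument; this seems to match what the abstract refers to as the ``multiplicative property of the Pfaffian.'' Once $\Phi^*$ is shown to be bijective, integrality and normality of $\mathfrak{C}^d_{\mathfrak{g}}/\!\!/G$ in characteristic zero follow formally from those of $\mathfrak{t}^d/\!\!/W$.
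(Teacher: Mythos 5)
There is a genuine gap, and it sits exactly at the point your proposal treats most casually: the non-reducedness of $\mathfrak{C}^d_{\mathfrak{g}}$. Any argument based on evaluating invariant functions at points---density of $G\cdot\mathfrak{t}^d$, simultaneous conjugation of tuples with a regular semisimple entry, Jordan decomposition of closed orbits---only ever sees the reduced variety $\mathfrak{C}^d_{\mathfrak{g},red}$, so at best it proves $\mathfrak{t}^d/\!\!/W\simeq \mathfrak{C}^d_{\mathfrak{g},red}/\!\!/G$, which is Hunziker's older, weaker statement. The conjecture asks for injectivity of $\Phi$ on $\mathbb{K}[\mathfrak{C}^d_{\mathfrak{g}}]^G$, i.e.\ one must exclude nilpotent invariants (and, more generally, invariants killed by restriction to $\mathfrak{t}^d$ for reasons invisible to points); a ``Jordan-decomposition deformation argument'' gives no handle on this, and no such extension from the reduced to the non-reduced ring is known to be formal. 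The paper's resolution is entirely algebraic: by Procesi--Zubkov--Lopatin, $\mathbb{K}[\mathfrak{C}^d_{\mathfrak{g}}]^G$ is linearly spanned by the coefficients of $\sqrt{F_{\mathfrak{g}}}$, where $F_{\mathfrak{g}}$ is the universal determinant series of \eqref{equ:defn of F_g and   F_t}; the technical heart (Lemma \ref{lemma:deg of sqrt F_g}, proved via Lemmas \ref{lemma:pfaffian solution}--\ref{lemma:expansion of pfaffian}) is an explicit construction of a square root of $F_{\mathfrak{g}}$ as a Pfaffian of an auxiliary $(2d+4)n\times(2d+4)n$ skew-symmetric matrix over the possibly non-reduced ring $\mathbb{K}[\mathfrak{C}^d_{\mathfrak{g}}]$, yielding $\deg\sqrt{F_{\mathfrak{g}}}\le\lfloor n/2\rfloor$. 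This degree bound is what lets $\Phi$ carry a spanning set bijectively onto the basis of $\mathbb{K}[\mathfrak{t}^d]^W$ given by the coefficients of $\sqrt{F_{\mathfrak{t}}}$ (Lemma \ref{lemma:deg of sqrt F_t}), giving injectivity and surjectivity simultaneously without any density argument. Your proposal contains no substitute for this step.

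Two further corrections. First, even at the reduced level your injectivity argument is incomplete: $\mathfrak{C}^d_{\mathfrak{g},red}$ is in general reducible for $d\ge 2$, and vanishing on the component containing $\mathfrak{t}^d$ does not give vanishing everywhere; one must instead argue through closed orbits (every orbit closure of a commuting tuple contains the orbit of its semisimple part, which is conjugate into $\mathfrak{t}^d$), which is how the reduced statement is classically obtained. Second, you have the logical direction of the Pfaffian identity reversed: the ``multiplicative property of the Pfaffian'' over an arbitrary commutative algebra (Corollary \ref{cor:multiplicative of Pfaffian}) is an \emph{application} of the restriction isomorphism, not an ingredient of its proof; the Pfaffians that do enter the proof are the series $H_{\mathfrak{g}},H_{\mathfrak{t}}$ used to control the $(-1)$-eigenspace in the type $D$ case (Lemma \ref{lemma:basis of K[t]^W_(1)}, Proposition \ref{prop:Im=V_(1)}) and the auxiliary Pfaffian construction behind Lemma \ref{lemma:deg of sqrt F_g}, neither of which is the factorization identity you propose to establish.
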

 When $d=1$ and $\rm{char} \ \mathbb{K} =0$, Conjecture \ref{conj} is simply the classical Chevalley restriction theorem. Since in the context of Higgs bundles, $d$ is the dimension of the underlying variety $X$, we view the conjecture as a higher-dimensional analogue of Chevalley restriction theorem. Note when $d=2$ and $\rm{char} \ \mathbb{K} =0$, this conjecture is a special  case (degree zero part) of a more general conjecture proposed by Berest et al. [\ref{Berest et al.}].


If $\rm{char} \ \mathbb{K} =0$, Conjecture \ref{conj} is  known  to hold for  $G=GL_n(\mathbb{K})$ (Vaccarino [\ref{Vaccarino}], Domokos [\ref{Domokos}], and later Chen-Ng\^{o} [\ref{Chen-Ngo}] independently; see also Gan-Ginzburg [\ref{Gan-Ginzburg}] for case $d=2$) and for $G=Sp_{n}(\mathbb{K})$ (Chen-Ng\^{o} [\ref{Chen-Ngo-Symplectic}]). 
A weaker version $\mathfrak{t}^d/\!\!/W\xrightarrow{\sim} \mathfrak{C}^d_{\mathfrak{g},red}/\!\!/G$ is proved by Hunziker [\ref{Hunziker}] if $G$ is of type $A,B,C,D$ or $G_2$.

If $\rm{char}\ \mathbb{K} >0$, Conjecture \ref{conj} is largely open. However, Vaccarino [\ref{Vaccarino}] proved the weaker version $\mathfrak{t}^d/\!\!/W\xrightarrow{\sim} \mathfrak{C}^d_{\mathfrak{g},red}/\!\!/G$ for $G=GL_n(\mathbb{K})$.   

The main purpose of the article is to prove Conjecture \ref{conj} for orthogonal groups in case $\rm{char} \ \mathbb{K} =0$ and to prove a weaker version of this conjecture in case $\rm{char} \ \mathbb{K}>2$. To be more  precise, our main result is the following (see Theorems \ref{main thm: char.  0}, \ref{thm:char.  p case}, \ref{thm:main thm in the n even G=SO_n case} and \ref{thm:n even G=SO_n, char.  p case}):
\begin{theorem}\label{main thm in introduction}
Suppose $n\geq 2$, $d\geq 1$, and  $G$ is an orthogonal group $O_n(\mathbb{K})$ or a special orthogonal group $SO_n(\mathbb{K})$. 
\begin{enumerate}
\item If $\rm{char} \ \mathbb{K} =0$, then $\Phi: \mathfrak{t}^d/\!\!/W\rightarrow \mathfrak{C}^d_{\mathfrak{g}}/\!\!/G$ is an isomorphism.
\item If $\rm{char} \ \mathbb{K} >2$, then $\Phi: \mathfrak{t}^d/\!\!/W\rightarrow \mathfrak{C}^d_{\mathfrak{g},red}/\!\!/G$ is an isomorphism. 
\end{enumerate} 
\end{theorem}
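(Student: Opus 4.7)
The plan is to analyze the induced ring homomorphism $\Phi^*\colon \mathbb{K}[\mathfrak{C}^d_{\mathfrak{g}}]^G \to \mathbb{K}[\mathfrak{t}^d]^W$ (respectively $\mathbb{K}[\mathfrak{C}^d_{\mathfrak{g},red}]^G \to \mathbb{K}[\mathfrak{t}^d]^W$ for part (2)) and show it is a ring isomorphism. This map factors through the classical restriction $\operatorname{res}\colon \mathbb{K}[\mathfrak{g}^d]^G \to \mathbb{K}[\mathfrak{t}^d]^W$. First, I would invoke the first fundamental theorem of orthogonal invariant theory (Weyl's theorem, extended to good characteristic by De Concini--Procesi) to identify generators of $\mathbb{K}[\mathfrak{g}^d]^G$: traces $\operatorname{tr}(X_{i_1}\cdots X_{i_k})$ of monomials in the coordinate matrices, supplemented by Pfaffian-type invariants in the $SO_n$ case with $n$ even.

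For surjectivity of $\Phi^*$, since $\mathbb{K}[\mathfrak{g}^d]^G$ surjects onto $\mathbb{K}[\mathfrak{C}^d_{\mathfrak{g}}]^G$ (and onto its reduced counterpart), it suffices to check that $\operatorname{res}$ is surjective. Restricting traces of monomials to the Cartan produces power sums in the joint eigenvalues; these, together with the restrictions of the Pfaffian generators, generate all of $\mathbb{K}[\mathfrak{t}^d]^W$ by a symmetric-function computation accounting for the type $D$ sign conditions. For injectivity in the reduced case (part (2)), I would use a density argument: every $d$-tuple $(x_1,\ldots,x_d)$ of commuting elements of $\mathfrak{g}$ with $x_1$ regular semisimple is simultaneously $G$-conjugate to a point of $\mathfrak{t}^d$, since the centralizer $\mathfrak{z}_{\mathfrak{g}}(x_1)$ equals $\mathfrak{t}$. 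Combined with the assertion that $G\cdot \mathfrak{t}^d$ meets every irreducible component of $\mathfrak{C}^d_{\mathfrak{g},red}$, one concludes that any $G$-invariant vanishing on $\mathfrak{t}^d$ vanishes on all of $\mathfrak{C}^d_{\mathfrak{g},red}$, yielding part (2).

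The main obstacle is part (1), the scheme-theoretic statement in characteristic $0$. Injectivity of $\Phi^*$ here cannot be obtained from density alone: one must show that if $f\in \mathbb{K}[\mathfrak{g}^d]^G$ vanishes on $\mathfrak{t}^d$, then $f$ lies in the defining ideal $I$ of the scheme $\mathfrak{C}^d_{\mathfrak{g}}$, and not merely in $\sqrt{I}$. This amounts to establishing explicit trace and Pfaffian identities valid on $d$-tuples of pairwise commuting matrices with entries in an arbitrary commutative algebra---precisely the applications highlighted in the abstract. My strategy would be to derive such identities by formally simultaneously diagonalizing over the generic point (exploiting unique factorization properties of characteristic polynomials and their Pfaffian analogues) and then to descend to the scheme-theoretic setting via the Reynolds averaging operator, which is available in characteristic $0$ because $G$ is linearly reductive. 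The delicate compatibility between these new trace identities and the extra Pfaffian generators in the $SO_n$, $n$ even, case is expected to be the technical crux of the argument; I would likely treat $O_n$ first (no Pfaffian generators), and then bootstrap the $SO_n$ case by comparing $SO_n$- and $O_n$-invariants through the quotient $O_n/SO_n=\mathbb{Z}/2$.
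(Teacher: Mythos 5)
Your injectivity arguments are where the proposal breaks down, and they break at exactly the point the paper identifies as its technical heart. For part (2), you want to deduce that a $G$-invariant vanishing on $\mathfrak{t}^d$ vanishes on $\mathfrak{C}^d_{\mathfrak{g},red}$ from the assertion that $G\cdot\mathfrak{t}^d$ meets every irreducible component. Meeting a component is not enough (an invariant vanishing on $\overline{G\cdot\mathfrak{t}^d}$ can still be nonzero on a component it only meets in a proper closed subset); you would need $\overline{G\cdot\mathfrak{t}^d}$, which is irreducible, to contain every component, i.e.\ irreducibility of the commuting variety. For $d\geq 3$ commuting varieties of reductive Lie algebras are in general reducible, with components containing no regular semisimple element, so this density statement is false in general and certainly not available as an input. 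The theorem is still true because the invariants do not separate such components, but that is a consequence of the theorem, not something a density argument can supply. The paper avoids this geometry entirely: using Procesi--Zubkov--Lopatin it shows the image of $\mathbb{K}[M_n(\mathbb{K})^d]^G$ in $\mathbb{K}[\mathfrak{C}^d_{\mathfrak{g}}]^G$ is $\mathbb{K}$-linearly spanned by the coefficients of $\sqrt{F_{\mathfrak{g}}}$ of degree at most $\lfloor n/2\rfloor$, that the corresponding coefficients of $\sqrt{F_{\mathfrak{t}}}$ form a $\mathbb{K}$-linear basis of $\mathbb{K}[\mathfrak{t}^d]^W$ (similarly with the Pfaffian series $H_{\mathfrak{g}}$, $H_{\mathfrak{t}}$ for the type $D$ part), and, in characteristic $p$, that every invariant of the reduced scheme has a power lying in this image (Mumford--Fogarty--Kirwan, Lemma A.1.2); injectivity is then pure linear algebra, with no irreducibility needed.

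For part (1) you correctly reformulate injectivity as: an invariant vanishing on $\mathfrak{t}^d$ must lie in the commutator ideal $I$ itself, not merely in $\sqrt{I}$. But the proposed mechanism --- ``formally simultaneously diagonalizing over the generic point'' and descending via the Reynolds operator --- does not exist as stated: $\mathfrak{C}^d_{\mathfrak{g}}$ may be non-reduced and reducible, so there is no generic point at which the tautological commuting skew-symmetric matrices can be put in normal form, and nilpotents of $\mathbb{K}[\mathfrak{C}^d_{\mathfrak{g}}]$ are invisible to any pointwise argument. This missing step is precisely what the paper supplies: the degree bound $\deg\sqrt{F_{\mathfrak{g}}}\leq\lfloor n/2\rfloor$ over the possibly non-reduced ring $\mathbb{K}[\mathfrak{C}^d_{\mathfrak{g}}]$, proved by explicitly constructing a square root of $F_{\mathfrak{g}}$ as a specialization of the Pfaffian of an auxiliary $(2d+4)n\times(2d+4)n$ skew-symmetric block matrix built from a constant skew-symmetric matrix chosen to satisfy a Pfaffian vanishing/non-vanishing system (Lemmas 5.2--5.5), followed by a weighted-homogeneity scaling argument; your proposal offers no substitute for it. A further, smaller, issue: in characteristic $p>2$ your surjectivity sketch via power sums is doubtful, since power sums need not generate multisymmetric invariants in positive characteristic --- the paper instead works with the orbit-sum coefficients of $N_{\mathfrak{t}}$ as a linear basis (adapting Hunziker), which is why its part (2) argument goes through for all $p>2$.
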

Our proof can also treat in a uniform way the case $G=Sp_n(\mathbb{K})$ ($n$ even), which is due to Chen-Ng\^{o} [\ref{Chen-Ngo-Symplectic}] if $\rm{char} \ \mathbb{K} =0$.  

Since  $W$ is finite and $\mathfrak{t}^d$ is an affine space,  the theorem implies
\begin{corollary}
If $\rm{char} \ \mathbb{K} =0$ and $G=O_n(\mathbb{K})$ or $SO_n(\mathbb{K})$,  the quotient $\mathfrak{C}^d_{\mathfrak{g}}/\!\!/G$ is integral (i.e. reduced and irreducible) and  normal. $\hfill$ $\qed$
\end{corollary}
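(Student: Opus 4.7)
The plan is to reduce everything to the isomorphism $\Phi\colon \mathfrak{t}^d/\!\!/W \xrightarrow{\sim} \mathfrak{C}^d_{\mathfrak{g}}/\!\!/G$ supplied by part (1) of Theorem~\ref{main thm in introduction} and then invoke standard properties of finite invariant rings. Once $\Phi$ is known to be an isomorphism, it suffices to show that the affine scheme $\mathfrak{t}^d/\!\!/W$ is integral and normal.

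First I would observe that $\mathfrak{t}^d$ is an affine space, so its coordinate ring $R=\mathcal{O}(\mathfrak{t}^d)$ is a polynomial ring over $\mathbb{K}$ and in particular an integrally closed domain. The Weyl group $W=N_G(T)/T$ is finite, and $\mathfrak{t}^d/\!\!/W=\operatorname{Spec} R^W$. Since $R^W\subset R$ and $R$ is a domain, $R^W$ is a domain, so $\mathfrak{t}^d/\!\!/W$ is integral.

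For normality I would use the classical fact that if a finite group acts on a normal domain $R$, then the fixed subring $R^W$ is again normal. Concretely, any element of $\operatorname{Frac}(R^W)\subseteq\operatorname{Frac}(R)$ integral over $R^W$ is a fortiori integral over $R$, hence lies in $R$; being $W$-invariant, it lies in $R\cap\operatorname{Frac}(R^W)=R^W$. Combining this with the isomorphism $\Phi$ from Theorem~\ref{main thm in introduction}(1) yields that $\mathfrak{C}^d_{\mathfrak{g}}/\!\!/G\cong\operatorname{Spec} R^W$ is integral and normal.

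There is no real obstacle here: the entire content of the corollary is packaged in the characteristic zero case of the main theorem, and the passage from $\mathfrak{t}^d/\!\!/W$ being a finite quotient of an affine space to integrality and normality is immediate from basic commutative algebra. The only thing worth remarking is that the corollary does not follow from the weaker part (2) of the main theorem, because the statement there involves $\mathfrak{C}^d_{\mathfrak{g},red}$ rather than $\mathfrak{C}^d_{\mathfrak{g}}$, so reducedness of the scheme-theoretic quotient truly uses the full strength of the characteristic zero isomorphism.
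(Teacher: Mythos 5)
Your argument is correct and coincides with the paper's own reasoning: the corollary is deduced directly from Theorem \ref{main thm: char.  0} (the characteristic-zero isomorphism $\Phi$) together with the standard facts that $\mathbb{K}[\mathfrak{t}^d]^W$ is a domain, being a subring of a polynomial ring, and is normal, being the invariant ring of a finite group acting on a normal domain. Nothing further is needed.
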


We will divide the proof into two parts, according to the type of the Weyl group $W$. In the first part, $W$ is of type $B$, and this includes cases $G=O_n(\mathbb{K})$, $SO_n(\mathbb{K})$ ($n$ odd), and $Sp_n(\mathbb{K})$ ($n$ even). Based on the results in the first part (Theorems \ref{main thm: char.  0} and \ref{thm:char.  p case}), we prove Theorem \ref{thm:main thm in the n even G=SO_n case} and Theorem \ref{thm:n even G=SO_n, char.  p case} in the second one, which corresponds to the $W$ of type $D$ ($G=SO_n(\mathbb{K})$, $n$ even).

Next let us explain the strategy of  our proof of Theorem \ref{main thm: char.  0}.  Let $\mathbb{K}[\mathfrak{C}^d_{\mathfrak{g}}]$ (resp. $\mathbb{K}[\mathfrak{t}^d]$) be the coordinate ring of $\mathfrak{C}^d_{\mathfrak{g}}$  (resp. $\mathfrak{t}^d$).
In order to show the restriction homomorphism $\Phi: \mathbb{K}[\mathfrak{C}_{\mathfrak{g}}^d]^G\rightarrow \mathbb{K}[\mathfrak{t}^d]^W$ is an isomorphism, it suffices to construct a $\mathbb{K}$-linearly spanning set of $\mathbb{K}[\mathfrak{C}_{\mathfrak{g}}^d]^G$, which are mapped bijectively to a $\mathbb{K}$-linear basis of $\mathbb{K}[\mathfrak{t}^d]^W$. 

The results of Procesi [\ref{procesi:matrix invariants}] give a set of generators of $\mathbb{K}[M_n(\mathbb{K})^d]^G $, which induces a set of generators of $\mathbb{K}[\mathfrak{C}^d_{\mathfrak{g}}]^G$ under the surjective homomorphism $\mathbb{K}[M_n(\mathbb{K})^d]^G \rightarrow\mathbb{K}[\mathfrak{C}^d_{\mathfrak{g}}]^G$. We  shall encode these generators in the determinant of a universal matrix.  In \eqref{equ:defn of F_g and   F_t}, we define the formal power series
\begin{displaymath}
F_{\mathfrak{g}}=\det (I_n+\sum\limits_{(i_1,\cdots, i_d)\in S}X(1)^{i_1}X(2)^{i_2}\cdots X(d)^{i_d}T_{i_1\cdots i_d}),
\end{displaymath}
with $F_{\mathfrak{g}}\in \mathbb{K}[\mathfrak{C}^d_{\mathfrak{g}}]^G[[T_{i_1\cdots i_d}|(i_1,\cdots, i_d)\in S]]$, where the index set $S$ is
\[\{(i_1, \cdots, i_d)\:|\: i_1, \cdots, i_d \in \mathbb{Z}_{\geq 0}, (i_1, \cdots, i_d)\neq (0,\cdots, 0), i_1+\cdots + i_d \textmd{ is even}\}, \] 
and $X(1),\cdots, X(d)$ are ``generic" commuting skew symmetric $n\times n$ matrices over $\mathbb{K}[\mathfrak{C}^d_{\mathfrak{g}}]$.
Based on Procesi's results, we can show that a $\mathbb{K}$-linearly spanning set of $\mathbb{K}[\mathfrak{C}^d_{\mathfrak{g}}]^G$ is given by the coefficients of $F_{\mathfrak{g}}$, and in turn  by the coefficients of $\sqrt{F_{\mathfrak{g}}}$, the unique square root of $F_{\mathfrak{g}}$ with constant coefficient $1$. 

Under the restriction homomorphism $\Phi: \mathbb{K}[\mathfrak{C}_{\mathfrak{g}}^d]^G\rightarrow \mathbb{K}[\mathfrak{t}^d]^W$, we deduce that
\[\Phi(\sqrt{F_{\mathfrak{g}}})=\sqrt{F_{\mathfrak{t}}}.\]
See \eqref{equ:defn of F_g and   F_t}, \eqref{equ:expansion F_t, F_g} for the definition of $F_{\mathfrak{t}}$ and $\sqrt{F_{\mathfrak{t}}}$.
 Now explicit computations show $\deg \sqrt{F_{\mathfrak{t}}}\leq \lfloor\frac{n}{2}\rfloor$, that is,  $\sqrt{F_{\mathfrak{t}}}$ can be written as  
 \[\sqrt{F_{\mathfrak{t}}}=1+ \sum\limits_{w\in \mathcal{M}(\lfloor\frac{n}{2}\rfloor)} c_w w,\] 
where $c_w\in \mathbb{K}[\mathfrak{t}^d]^W$ and $\mathcal{M}(\lfloor\frac{n}{2}\rfloor)$ is the set of non-empty monomials in  variables $\{T_{i_1 \cdots i_d}| (i_1, \cdots, i_d)\in S\}$ whose degree is less than or equal to $\lfloor\frac{n}{2}\rfloor$. We can also show that the coefficients of  $\sqrt{F_{\mathfrak{t}}}$ form a $\mathbb{K}$-linear basis of $\mathbb{K}[\mathfrak{t}^d]^W$.  So to finish, all we need to show is $\deg \sqrt{F_{\mathfrak{g}}}\leq \lfloor\frac{n}{2}\rfloor$.  This turns out to be the technical heart of this paper.     

To prove $\deg \sqrt{F_{\mathfrak{g}}}\leq \lfloor\frac{n}{2}\rfloor$, we explicitly construct a power series  $N$ with constant coefficient $1$ such that $N^2=F_{\mathfrak{g}}$. By the uniqueness of square roots with constant coefficient $1$, we see $N=\sqrt{F_{\mathfrak{g}}}$.  So the degree bound follows from the explicit construction of $N$.  

If $G=Sp_n(\mathbb{K})$, our construction of $N$ is essentially the same as that presented in Chen-Ng\^{o} [\ref{Chen-Ngo-Symplectic}]. However, if $G=O_n(\mathbb{K})$, the construction of $N$ is much more involved and poses significant challenges. We provide the full details in Section \ref{subsec:construction of N}.

\subsection*{Acknowledgments}
L. S. thanks Jianxun Hu for the encouragement. J. X. would like to thank Xiaowen Hu and Mao Sheng for helpful discussions related to this work. The authors would like to thank Hao Sun for the helpful comments which improve the manuscript. We are very grateful to the anonymous referee for a careful reading  and for providing valuable references. During the preparation of the article, 
L. S. was partially supported by the Guangdong Basic and Applied Basic Research Foundation 2020A1515010876, X. X. was partially supported by the Innovation Program for Quantum Science and Technology (2021ZD0302902), and J. X. was partially supported by the National Key R and D Program of China (2020YFA0713100),   CAS Project for Young Scientists in Basic Research (YSBR-032) and NSFC (12271495).

\section{Notations and preliminaries}\label{sec:some lemmas}

In this section we fix some notations and record some useful lemmas that will be used frequently in the subsequent sections.  

Throughout the paper,   $\mathbb{K}$ is an algebraically closed field of characteristic not equal to $2$. All rings are commutative, unless otherwise specified.

Given a ring $R$, we denote by $M_n(R)$ the set of $n\times n$ matrices over  $R$, and for a matrix $M$, we denote by $M^t$ its transpose, and by $M(i, j)$ the $(i, j)$-entry of $M$. We denote the coordinate ring of an affine $\mathbb{K}$-scheme $X$ by $\mathbb{K}[X]$. In particular, 
if $V$ is a $\mathbb{K}$-linear space, $\mathbb{K}[V]$ means the $\mathbb{K}$-algebra of polynomial functions on $V$. If a group $G$ acts $\mathbb{K}$-linearly on $V$, then $G$ acts naturally  on $\mathbb{K}[V]$ by $g\cdot f \ (v) =f(g^{-1}\cdot v)$, for $g\in G, \ f\in \mathbb{K}[V], \ v\in V$.  The $\mathbb{K}$-algebra  of $G$-invariant polynomials on $V$ is denoted by  $\mathbb{K}[V]^G$.  For $n\in \mathbb{Z}$, we denote by $\lfloor n \rfloor$ the maximal integer less than or equal to $n$.

Here are some lemmas on square roots of formal power series. 

\begin{lemma}\label{lemma:f^2 determines f, first form}
 Suppose $R=\oplus_{i=0}^{\infty} R_i$ is a  graded $\mathbb{K}$-algebra. Let $f_1, f_2\in R$ and let $c_j\in R_0$ be the degree zero part of $f_j$, $j=1,2$. If $f_1^2=f_2^2$, $c_1=c_2$, and if $c_1$ is not a zero-divisor in $R$, then $f_1=f_2$.
\end{lemma}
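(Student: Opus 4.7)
The plan is to factor $f_1^2 - f_2^2 = (f_1 - f_2)(f_1 + f_2) = 0$ in $R$ and then reduce the problem to showing that $f_1 + f_2$ is not a zero-divisor. The degree zero component of $f_1 + f_2$ is $c_1 + c_2 = 2c_1$; since $\operatorname{char} \mathbb{K} \neq 2$, the element $2$ is a unit in $\mathbb{K} \subset R_0$, so $2c_1$ inherits from $c_1$ the property of being a non-zero-divisor in $R$.

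Thus it suffices to establish the following standard fact about graded rings: if $h \in R = \bigoplus_{i \geq 0} R_i$ has degree zero component $h_0$ that is not a zero-divisor, then $h$ itself is not a zero-divisor. I would prove this by the usual lowest-degree trick. Suppose $h \cdot x = 0$ for some nonzero $x \in R$. Write $x = \sum_i x_i$ with $x_i \in R_i$ (a finite decomposition, since $R$ is an honest direct sum rather than a completion), and let $m$ be the smallest index with $x_m \neq 0$. Comparing degree-$m$ components on both sides of $hx = 0$ yields $h_0 x_m = 0$, and the non-zero-divisor hypothesis on $h_0$ forces $x_m = 0$, a contradiction.

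Applying this fact with $h = f_1 + f_2$ to the identity $(f_1 - f_2)(f_1 + f_2) = 0$ immediately gives $f_1 - f_2 = 0$, which is the desired conclusion. There is no real obstacle here: the argument is essentially routine, and the three hypotheses $c_1 = c_2$, $\operatorname{char} \mathbb{K} \neq 2$, and $c_1$ not a zero-divisor are exactly what is needed to make the lowest-degree trick go through.
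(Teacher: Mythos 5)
Your proof is correct and follows essentially the same route as the paper: factor $f_1^2-f_2^2=(f_1+f_2)(f_1-f_2)=0$, note the degree-zero part of $f_1+f_2$ is $2c_1$, and conclude $f_1+f_2$ is not a zero-divisor. The only difference is that you spell out, via the lowest-degree-component argument, the step the paper dismisses as ``it is easy to see,'' which is a welcome but not substantively different addition.
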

\begin{proof}
Put $f_1=c_1+g_1$ and  $f_2=c_2+g_2$. Since $f_1^2=f_2^2$ and $c_1=c_2$, we see 
  $$
(c_1+c_2+g_1+g_2) (g_1-g_2)=0.
  $$
Since $c_1+c_2=2c_1$ is not a zero-divisor in $R$, it is easy to see $c_1+c_2+g_1+g_2$ is not a zero-divisor in $R$. From this, we deduce $g_1=g_2$ and hence $f_1=f_2$. 
\end{proof}

\begin{lemma}\label{lemma:f^2 determines f}
Let  $R$ be a  $\mathbb{K}$-algebra, and  $R[[t_1, t_2,\cdots]]$ be the formal power series ring  with countable  variables.
\begin{enumerate}
\item Suppose $f_1, f_2\in R[[t_1, t_2,\cdots]]$, and let $c_j=f_j(0,0,\cdots )\in R$ be the constant coefficient  of $f_j$. If $f_1^2=f_2^2$, $c_1=c_2$, and if $c_1$ is not a zero-divisor in $R$, then $f_1=f_2$.
\item Suppose $g\in R[[t_1, t_2,\cdots]]$ has constant coefficient  $1$, then there exists a unique $f\in R[[t_1, t_2,\cdots]]$ whose constant coefficient is $1$ and which satisfies $f^2=g$. Moreover, the $\mathbb{K}$-subalgebra of $R$ generated by the coefficients of $g$ coincides with  that generated by the coefficients of $f$. 
\end{enumerate}

\end{lemma}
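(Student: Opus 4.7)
The plan is to adapt the argument of Lemma~\ref{lemma:f^2 determines f, first form} to the power series setting, treating $R[[t_1, t_2, \ldots]]$ as the set of formal sums $\sum_\alpha r_\alpha t^\alpha$ indexed by multi-indices $\alpha = (\alpha_1,\alpha_2,\ldots)$ of finite support. The only new ingredient I need is the intermediate

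\textbf{Claim.} If $u \in R[[t_1,t_2,\ldots]]$ has constant coefficient $u_0$ that is not a zero-divisor in $R$, then $u$ is not a zero-divisor in $R[[t_1,t_2,\ldots]]$.

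To prove the claim, I argue by contradiction. Suppose $uv = 0$ with $v = \sum_\alpha v_\alpha t^\alpha \neq 0$, and equip finite-support multi-indices with the componentwise partial order $\preceq$. Pick any $\gamma_0$ with $v_{\gamma_0} \neq 0$; since the box $\{\gamma : \gamma \preceq \gamma_0\}$ is finite, the subset $\{\gamma \preceq \gamma_0 : v_\gamma \neq 0\}$ has a $\preceq$-minimal element $\beta$, which is automatically $\preceq$-minimal in all of $\{\gamma : v_\gamma \neq 0\}$. The coefficient of $t^\beta$ in $uv$ equals $\sum_{\alpha+\gamma=\beta} u_\alpha v_\gamma$; every summand has $\gamma \preceq \beta$, and $\gamma \prec \beta$ forces $v_\gamma = 0$ by minimality, so only $\alpha = 0$, $\gamma = \beta$ contributes, giving $u_0 v_\beta = 0$. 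Since $u_0$ is not a zero-divisor, $v_\beta = 0$, a contradiction.

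With the claim in hand, part~(1) proceeds exactly as in Lemma~\ref{lemma:f^2 determines f, first form}: writing $f_1 = c_1 + h_1$, $f_2 = c_2 + h_2$ with $h_j(0,0,\ldots) = 0$, the equation $f_1^2 = f_2^2$ factors as $(h_1 - h_2)(2c_1 + h_1 + h_2) = 0$. The second factor has constant coefficient $2c_1$, which is a non-zero-divisor in $R$ because $\mathrm{char}\,\mathbb{K} \neq 2$ and $c_1$ is a non-zero-divisor; the claim then yields $h_1 = h_2$, hence $f_1 = f_2$.

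For part~(2), I would construct the square root coefficient by coefficient. Writing $f = 1 + \sum_{\alpha \neq 0} f_\alpha t^\alpha$, equating $f^2 = g$ term by term produces, for each $\alpha \neq 0$, the recursion
\[ 2 f_\alpha \;=\; g_\alpha \;-\; \sum_{\substack{\gamma + \delta = \alpha\\ 0 \prec \gamma,\, \delta \prec \alpha}} f_\gamma f_\delta, \]
which uniquely determines $f_\alpha$ (using $\mathrm{char}\,\mathbb{K} \neq 2$) from $g_\alpha$ and finitely many previously defined $f_\gamma$ with $|\gamma| := \sum_i \gamma_i < |\alpha|$. Induction on $|\alpha|$ gives existence, and uniqueness is immediate from part~(1) applied with $c_1 = c_2 = 1$. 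For the \emph{moreover} assertion, expanding $f^2 = g$ directly expresses every $g_\alpha$ as a $\mathbb{Z}$-polynomial in the $f_\beta$'s, while the displayed recursion expresses every $f_\alpha$ inductively as a $\mathbb{K}$-polynomial in the $g_\beta$'s, giving the desired equality of the two subalgebras.

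The only real obstacle I anticipate is keeping the set-theoretic setup of $R[[t_1,t_2,\ldots]]$ straight and justifying the minimality step in the claim; once this is in place, the remainder of the proof is a routine bookkeeping exercise.
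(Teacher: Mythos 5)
Your proposal is correct and follows essentially the same route as the paper: part (1) by factoring $f_1^2-f_2^2$ and showing the factor with non-zero-divisor constant term is a non-zero-divisor in the power series ring (your minimal-monomial claim just fills in the step the paper labels ``easy to see''), and part (2) by the identical coefficient-by-coefficient recursion $2f_\alpha = g_\alpha - \sum f_\gamma f_\delta$, with the same observation yielding the equality of the two coefficient subalgebras.
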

\begin{proof}
\begin{enumerate}
\item The proof is the same as that for the previous lemma.
\item The uniqueness of $f$ follows from $(1)$. In order to show its existence, let $\mathcal{T}$ be the set of all nonempty monomials in the variables $\{t_1, t_2,\cdots\}$. For $w_1,\ w_2\in \mathcal{T}$, define $w_1\leq w_2$ if there exists $w\in \{1\}\cup \mathcal{T}$ such that $w_2=w_1 w$. This gives a  partial order on $\mathcal{T}$.  Write $g=1+\sum_{w\in \mathcal{T}} a_w w$, with $a_w\in R$. For each $w\in \mathcal{T}$, we will define  by induction $b_w\in R$, so that $f:=1+\sum_{w\in \mathcal{T}}b_w w$ satisfies $f^2=g$. 

Suppose $w\in \mathcal{T}$ and suppose for any $v\in \mathcal{T}$ with $v<w$, the element $b_v\in R$ has been defined. Let $\mathcal{T}_w:=\{(v_1, v_2)\in \mathcal{T}^2| v_1<w, \ v_2<w, v_1 v_2=w\}$.   Then define 
\[b_w:=\frac{1}{2}(a_w-\sum_{(v_1,v_2)\in \mathcal{T}_w}b_{v_1}b_{v_2}).\]
By direct computations, we can see that  $f:=1+\sum_{w\in \mathcal{T}}b_w w$ satisfies $f^2=g$. Moreover, it follows from the explicit expressions that the $\mathbb{K}$-subalgebras $\mathbb{K}[a_w| w\in \mathcal{T}]$ and $\mathbb{K}[b_w| w\in \mathcal{T}]$ of $R$ coincide. 
\end{enumerate}
 \end{proof}

\begin{lemma}\label{lemma:f^2=y^2n g implies f=y^n h}
Let $R=\oplus_{i=0}^{\infty} R_i$ be a  graded $\mathbb{K}$-algebra, with $R_0=\mathbb{K}$.  Suppose $f, g\in R[[y,z_0,z_1,\cdots]]$ satisfy $f^2=y^{2n}g$  and  $g=z_0^{2m}+g_1+g_2+\cdots$,  for $ g_i\in R_i[[y,z_0,z_1,\cdots]]$, with  $n, m \geq 0$, then there exists $\tilde{f}\in R[[y,z_0,z_1,\cdots]]$ satisfying $f=y^n \tilde{f}$. Moreover, if $f^2=y^{2n}$, then $f=\pm y^n$.
\end{lemma}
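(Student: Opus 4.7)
The plan is to exploit the grading on $R$ to split the single equation $f^2=y^{2n}g$ into a countable family of equations indexed by $R$-degree, and then induct to show $y^n$ divides each graded piece of $f$.

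I would first write $f=\sum_{k\ge 0}f_k$ with $f_k\in R_k[[y,z_0,z_1,\ldots]]$, which is well-defined via the product decomposition $R[[y,z_0,z_1,\ldots]]=\prod_{k}R_k[[y,z_0,z_1,\ldots]]$ induced by the grading on $R$. Comparing $R$-degree-$0$ parts of $f^2=y^{2n}g$ gives $f_0^2=y^{2n}z_0^{2m}$ in the integral domain $\mathbb{K}[[y,z_0,z_1,\ldots]]$, so $f_0=\varepsilon\,y^n z_0^m$ for some $\varepsilon\in\{\pm 1\}$. For $k\ge 1$, the $R$-degree-$k$ component of the equation reads
\[
2\varepsilon\,y^n z_0^m f_k+\sum_{\substack{i+j=k\\ i,j\ge 1}}f_if_j=y^{2n}g_k.
\]

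Next I would prove $y^n\mid f_k$ by induction on $k$. Assuming $y^n\mid f_i$ for $1\le i<k$, every term $f_if_{k-i}$ in the sum above is divisible by $y^{2n}$, so the sum equals $y^{2n}h_k$ for some $h_k\in R_k[[y,z_0,z_1,\ldots]]$. Cancelling $y^n$ on both sides (using $y$-torsion-freeness of the power-series ring) yields $2\varepsilon\,z_0^m f_k=y^n(g_k-h_k)$, so $y^n$ divides $2\varepsilon\,z_0^m f_k$. Since $z_0^m$ is not a zero-divisor in $R_k[[y,z_0,z_1,\ldots]]$ (expand in powers of $z_0$ to see this) and $2\in\mathbb{K}^{\times}$ because $\mathrm{char}\,\mathbb{K}\ne 2$, this forces $y^n\mid f_k$. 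Combined with $y^n\mid f_0$, this gives $y^n\mid f$, and one sets $\tilde f=f/y^n\in R[[y,z_0,z_1,\ldots]]$.

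For the moreover clause, the main statement applied with $g=1$ and $m=0$ produces $f=y^n\tilde f$ with $\tilde f^2=1$. Decomposing $\tilde f=\sum_k\tilde f_k$ by the $R$-grading, the degree-$0$ equation $\tilde f_0^2=1$ in $\mathbb{K}[[y,z_0,z_1,\ldots]]$ gives $\tilde f_0=\pm 1$, and for $k\ge 1$ the relation $2\tilde f_0\tilde f_k+\sum_{\substack{i+j=k\\ i,j\ge 1}}\tilde f_i\tilde f_j=0$, combined with the induction hypothesis $\tilde f_i=0$ for $0<i<k$, forces $\tilde f_k=0$. Hence $\tilde f=\pm 1$ and $f=\pm y^n$. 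The delicate point throughout is the divisibility step in the induction: it requires simultaneously using $y$-regularity, $z_0^m$-regularity of $R_k[[y,z_0,z_1,\ldots]]$, and invertibility of $2$; everything else is routine bookkeeping once the grading on $R$ is used to split the equation.
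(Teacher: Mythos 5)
Your proof is correct and follows essentially the same route as the paper: decompose $f$ by the grading of $R$, solve the degree-zero part to get $\pm y^n z_0^m$, and induct on the graded degree using $y$-regularity, $z_0^m$-regularity and the invertibility of $2$, with the same treatment of the case $f^2=y^{2n}$. One harmless imprecision: $R[[y,z_0,z_1,\ldots]]$ is only contained in, not equal to, $\prod_k R_k[[y,z_0,z_1,\ldots]]$, but all you actually use is the coefficientwise decomposition $f=\sum_k f_k$, which is valid, so nothing is affected.
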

\begin{proof}
Let $f=f_0+f_1+\cdots $,  with $f_i\in R_i[[y,z_0,z_1,\cdots]]$. By $f^2=y^{2n} g$  we get $f_0^2=y^{2n}z_0^{2m}$ in $R_0[[y,z_0,z_1,\cdots]]=\mathbb{K}[[y,z_0,z_1,\cdots]]$. This implies  $f_0=\pm y^nz_0^m$.   Assume without loss of generality that $f_0=y^nz_0^m$.  Now suppose $i\geq 1$ and suppose for each $1\leq j<i$, there exists $\tilde{f}_j\in R_j[[y,z_0,z_1,\cdots]]$ such that $f_j=y^n \tilde{f}_j$.  By comparing the $R_i[[y,z_0,z_1,\cdots]]$ part of $f^2$ and $y^{2n}g$, we get $\sum_{j=0}^i f_j f_{i-j}=y^{2n} g_i$. Since $\sum_{j=0}^i f_j f_{i-j}=2f_0f_i+\sum_{j=1}^{i-1}f_jf_{i-j}=2y^nz_0^m f_i+y^{2n}\sum_{j=1}^{i-1}\tilde{f}_j\tilde{f}_{i-j}$, we obtain $2y^nz_0^m f_i=y^{2n} (g_i-\sum_{j=1}^{i-1}\tilde{f}_j\tilde{f}_{i-j})$. As neither $z_0$ nor $y$ is a zero-divisor in $R[[y,z_0,z_1,\cdots]]$, this implies $2z_0^m f_i=y^{n} (g_i-\sum_{j=1}^{i-1}\tilde{f}_j\tilde{f}_{i-j})$, and then $f_i=y^n\tilde{f}_i$ for some $\tilde{f}_i\in R_i[[y,z_0,z_1,\cdots]]$. By induction on $i$ we see that for each $i\geq 1$, 
there exists $\tilde{f}_i\in R_i[[y,z_0,z_1,\cdots]]$ satisfying $f_i=y^n\tilde{f}_i$. Since $f\in R[[y,z_0,z_1,\cdots]]$, the formal sum $\tilde{f}:=z_0^m+\sum_{i=1}^{\infty}\tilde{f}_i$ can be viewed as an element of $R[[y,z_0,z_1,\cdots]]$ and it satisfies $f=y^n\tilde{f}$.

If $f^2=y^{2n}$, then $\tilde{f}^2=1$, and this implies $\tilde{f}\in R_0=\mathbb{K}$. So $\tilde{f}=\pm1$ and $f=\pm y^n$. 
\end{proof}

\section{Main theorems: type $B$ case}\label{sec:main theorems}
Suppose $n\geq 2, \ d\geq 1$ are   positive integers. Let 
\begin{displaymath}
O_n(\mathbb{K}):=\{A\in M_n(\mathbb{K})\:|\: A A^t=I_n\}
\end{displaymath}
be the orthogonal group, and if $n$ is even, let
\begin{displaymath}
Sp_n(\mathbb{K}):=\{A\in M_n(\mathbb{K})\:|\: A^t J A =I_n\}
\end{displaymath}
be the symplectic group, where $J=\begin{pmatrix}
0& I_{\frac{n}{2}}\\ 
-I_{\frac{n}{2}}&0
\end{pmatrix}$. 
 
 Throughout this section, $G$ is  one of the following groups:
\begin{displaymath}
G=
\begin{cases}
O_n(\mathbb{K}), \\ 
Sp_n(\mathbb{K}), & n \textmd{   even},\\ 
SO_n(\mathbb{K}), & n \textmd{   odd}.
\end{cases}
\end{displaymath}

Let $T$ be a maximal torus of $G$ and $W=N_G(T)/T$ be the Weyl group.  If $n\geq 4$ is even, the Weyl groups of $O_n(\mathbb{K})$ and $Sp_n(\mathbb{K})$ are isomorphic, and both are of type $B$.  As usual,  $\mathfrak{g}$ denotes the Lie algebra of $G$. Explicitly, 
\begin{displaymath}
\mathfrak{g}=\begin{cases}
\mathfrak{so}_n, \ \textmd{ if } G \ \textmd{ is } O_n(\mathbb{K}) \textmd{ or } SO_n(\mathbb{K}); \\ 
\mathfrak{sp}_n, \ \textmd{ if } G \ \textmd{ is } Sp_n(\mathbb{K}),
\end{cases}
\end{displaymath}
where we fix the realizations of the Lie algebras as matrices:
\begin{eqnarray*}
\mathfrak{so}_n&= & \{A\in M_n(\mathbb{K}\:)|\: A+A^t=0\}, \\
\mathfrak{sp}_n &= &\{A\in M_n(\mathbb{K})\:|\: A^t J+ J A=0\}.
\end{eqnarray*}

The Lie algebra $\mathfrak{t}$ of $T$ is  a Cartan subalgebra of $\mathfrak{g}$. Via the diagonal adjoint  representation, the group $G$ acts on $\mathfrak{g}^d$ and this induces  an action of $W$ on $\mathfrak{t}^d$. For $1\leq k\leq d$, \  $1\leq i,j\leq n$, let $x(k)_{ij}$ be the polynomial function of $\mathfrak{g}^d$ whose value at a point $(A_1,\cdots, A_d)\in \mathfrak{g}^d$ is the $(i,j)$-entry of the matrix $A_k\in M_n(\mathbb{K})$. 

 Over the  ring $\mathbb{K}[\mathfrak{g}^d]$, consider the ``generic" $n\times n$ matrices $X(1)$, $X(2)$, $\cdots$, $X(d)$, such that  the $(i,j)$-entry of $X(k)$ is $x(k)_{ij}$. 
 Let $I$ be the ideal of $\mathbb{K}[\mathfrak{g}^d]$ generated by all of the entries of the matrices $[X(k), X(l)]:=X(k)X(l)-X(l)X(k), \ 1\leq k, l\leq d$, and define the quotient ring 
 \[\mathbb{K}[\mathfrak{C}^d_{\mathfrak{g}}]:=\mathbb{K}[\mathfrak{g}^d]/I. \]
 This ring can be viewed as the coordinate ring of the commuting scheme $\mathfrak{C}^d_{\mathfrak{g}}$. Since $I$ is obviously a homogeneous ideal of the polynomial ring $\mathbb{K}[\mathfrak{g}^d]$, the quotient ring  $\mathbb{K}[\mathfrak{C}^d_{\mathfrak{g}}]=\oplus_{i=0}^{\infty}\mathbb{K}[\mathfrak{C}^d_{\mathfrak{g}}]_i$ is a graded $\mathbb{K}$-algebra. Moreover, the  degree zero part $\mathbb{K}[\mathfrak{C}^d_{\mathfrak{g}}]_0=\mathbb{K}$, and the degree one part $\mathbb{K}[\mathfrak{C}^d_{\mathfrak{g}}]_1$ is $\mathbb{K}$-linearly spanned by $x(k)_{ij}, \ 1\leq k\leq d, \ 1\leq i,j\leq n$. From now on, we view $X(k)$ $(1\leq k\leq d)$ as matrices over $\mathbb{K}[\mathfrak{C}^d_{\mathfrak{g}}]$. Note these matrices are mutually commutative by the definition of $I$, and $(X(1),\cdots, X(d))\in \mathfrak{C}^d_{\mathfrak{g}}(\mathbb{K}[\mathfrak{C}^d_{\mathfrak{g}}])$ can also be viewed as the tautological $\mathbb{K}[\mathfrak{C}^d_{\mathfrak{g}}]$-valued  point of the commuting scheme $\mathfrak{C}^d_{\mathfrak{g}}$.

 Since obviously $I$ is $G$-invariant, we have the induced action of $G$ on $\mathbb{K}[\mathfrak{C}^d_{\mathfrak{g}}]$. Moreover, the action preserves the degrees on $\mathbb{K}[\mathfrak{C}^d_{\mathfrak{g}}]$, so that the invariant subring $\mathbb{K}[\mathfrak{C}^d_{\mathfrak{g}}]^G$ is still a graded $\mathbb{K}$-algebra whose degree zero part is equal to $\mathbb{K}$. Any polynomial function on $\mathfrak{g}^d$ restricts to a polynomial function on $\mathfrak{t}^d$ through the inclusion $\mathfrak{t}^d\subset \mathfrak{g}^d$, and the restriction homomorphism $\mathbb{K}[\mathfrak{g}^d]\rightarrow \mathbb{K}[\mathfrak{t}^d]$ factors through $\mathbb{K}[\mathfrak{C}^d_{\mathfrak{g}}]$. This induces  the following  restriction homomorphism between the invariant rings:
\begin{displaymath}
\Phi: \mathbb{K}[\mathfrak{C}^d_{\mathfrak{g}}]^G \rightarrow \mathbb{K}[\mathfrak{t}^d]^W.
\end{displaymath}

For $1\leq k\leq d$, let $Y(k)$ be the image of $X(k)$  under the restriction homomorphism $M_n(\mathbb{K}[\mathfrak{C}^d_{\mathfrak{g}}])\rightarrow M_n(\mathbb{K}[\mathfrak{t}^d])$. Then $Y(1),\cdots, Y(d)$ are commuting $n\times n$ matrices over $\mathbb{K}[\mathfrak{t}^d]$. 
We define the following formal power series:
\begin{equation}\label{equ:defn of F_g and   F_t}
\begin{split}
F_{\mathfrak{g}}&:=\det (I_n+\sum\limits_{(i_1,\cdots, i_d)\in S}X(1)^{i_1}X(2)^{i_2}\cdots X(d)^{i_d}T_{i_1\cdots i_d}),\\ 
F_{\mathfrak{t}}&:=\det (I_n+\sum\limits_{(i_1,\cdots, i_d)\in S}Y(1)^{i_1}Y(2)^{i_2}\cdots Y(d)^{i_d}T_{i_1\cdots i_d}),
\end{split}
\end{equation}
with $F_{\mathfrak{g}}\in \mathbb{K}[\mathfrak{C}^d_{\mathfrak{g}}][[T_{i_1\cdots i_d}|(i_1,\cdots, i_d)\in S]]$ and  $F_{\mathfrak{t}}\in \mathbb{K}[\mathfrak{t}^d][[T_{i_1\cdots i_d}|(i_1,\cdots, i_d)\in S]]$.
Here and throughout this paper,  $S$ denotes the index set  
\[\{(i_1, \cdots, i_d)\:|\: i_1, \cdots, i_d \in \mathbb{Z}_{\geq 0}, (i_1, \cdots, i_d)\neq (0,\cdots, 0), i_1+\cdots + i_d \textmd{ is even}\}.\] 
Since determinants are invariant under conjugations, we see in fact $F_{\mathfrak{g}}\in \mathbb{K}[\mathfrak{C}^d_{\mathfrak{g}}]^G [[T_{i_1\cdots i_d}|(i_1,\cdots, i_d)\in S]]$ and $F_{\mathfrak{t}}\in \mathbb{K}[\mathfrak{t}^d]^W [[T_{i_1\cdots i_d}|(i_1,\cdots, i_d)\in S]]$. Moreover, under the restriction homomorphism, we have 
$\Phi (F_{\mathfrak{g}})=F_{\mathfrak{t}}$. 

By Lemma \ref{lemma:f^2 determines f}, there exists a unique $\sqrt{F_{\mathfrak{g}}}\in \mathbb{K}[\mathfrak{C}^d_{\mathfrak{g}}]^G [[T_{i_1\cdots i_d}|(i_1,\cdots, i_d)\in S]]$ with constant coefficient  $1$ and satisfying   $\sqrt{F_{\mathfrak{g}}}^2=F_{\mathfrak{g}}$. Similarly, we denote $\sqrt{F_{\mathfrak{t}}}\in \mathbb{K}[\mathfrak{t}^d]^W [[T_{i_1\cdots i_d}|(i_1,\cdots, i_d)\in S]]$ as the unique square root of $F_{\mathfrak{t}}$ with constant coefficient $1$. Define $\mathcal{M}$ to be the set of non-empty monomials in the variables $\{T_{i_1 \cdots i_d}| (i_1, \cdots, i_d)\in S\}$. We expand these two formal power series as follows:
\begin{equation}\label{equ:expansion F_t, F_g}
 \sqrt{F_{\mathfrak{g}}}=1+\sum\limits_{w\in \mathcal{M}}d_w w,\ \ \ \sqrt{F_{\mathfrak{t}}}= 1+\sum\limits_{w\in \mathcal{M}} c_w w,
\end{equation} 
with  $d_w\in \mathbb{K}[\mathfrak{C}^d_{\mathfrak{g}}]^G$,\ $c_w\in \mathbb{K}[\mathfrak{t}^d]^W$.

For a commutative ring $R$, 
and for a nonzero formal power series $f\in R[[T_{i_1 \cdots i_d}| (i_1, \cdots, i_d)\in S]] $, we write $f=a_0+\sum\limits_{w\in \mathcal{M}}a_w w$ with $a_0,\ a_w\in R$. Define the degree $\deg f\in \mathbb{Z}_{\geq 0}\cup \{+\infty\}$ of $f$ by 
\[\deg f:=
\begin{cases}
0, \ \textmd{if } f=a_0;\\ 
\sup \{\deg w| w\in \mathcal{M}, a_w\neq 0\}, \ \textmd{if } f\neq a_0.
\end{cases}
\]
We say $f$ is \textit{of finite degree} if $\deg f<+\infty$.

Let $\bigoplus \limits_S\mathbb{K}$ be the direct sum of $\mathbb{K}$ indexed by $S$. Note that for an element $\underline{t}=(t_{i_1\cdots i_d})\in \bigoplus \limits_S\mathbb{K} $, the components  $t_{i_1 \cdots i_d}=0$ for all but finitely many $(i_1, \cdots,  i_d)\in S$. Then  if a formal power series $f\in R[[T_{i_1 \cdots i_d}| (i_1, \cdots, i_d)\in S]] $ is of finite degree, we get a well-defined element $f(\underline{t})\in R$ by evaluating $f$ at $T_{i_1 \cdots i_d}=t_{i_1 \cdots i_d}$. Note $f(\underline{0})\in R$ is the constant coefficient of $f$, where $\underline{0}$ is the zero element in $\bigoplus \limits_S\mathbb{K}$.

Let $G$ act on $M_n(\mathbb{K})^d$ via simultaneous conjugation: $g\cdot(A_1,\cdots, A_d)=(gA_1 g^{-1},\cdots, gA_d g^{-1})$, \ for $g\in G$, \ $(A_1,\cdots, A_d)\in M_n(\mathbb{K})^d$. Then the inclusion $\mathfrak{g}^d\subset M_n(\mathbb{K})^d$ induces a restriction homomorphism $\varphi: \mathbb{K}[M_n(\mathbb{K})^d]^G\rightarrow \mathbb{K}[\mathfrak{C}^d_{\mathfrak{g}}]^G$, and we have the following diagram:
\begin{displaymath}
\mathbb{K}[M_n(\mathbb{K})^d]^G \xrightarrow{\varphi} 
\mathbb{K}[\mathfrak{C}^d_{\mathfrak{g}}]^G \xrightarrow{\Phi} \mathbb{K}[\mathfrak{t}^d]^W.
\end{displaymath}


\begin{proposition}\label{prop:image varphi=V}
 The image of $\varphi$ is $\mathbb{K}$-linearly spanned by the coefficients $\{1\}\cup\{d_w| w\in \mathcal{M}\}$ of $\sqrt{F_{\mathfrak{g}}}$.
\end{proposition}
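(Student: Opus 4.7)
The plan is to identify both $\mathrm{Im}(\varphi)$ and the $\mathbb{K}$-linear span $V$ of $\{1\}\cup\{d_w:w\in\mathcal{M}\}$ with the same explicit object: the $\mathbb{K}$-subalgebra $U\subseteq\mathbb{K}[\mathfrak{C}^d_{\mathfrak{g}}]^G$ generated by the traces
\[
A_\alpha:=\mathrm{tr}\bigl(X(1)^{\alpha_1}\cdots X(d)^{\alpha_d}\bigr),\qquad \alpha\in S.
\]

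First I would establish $\mathrm{Im}(\varphi)=U$ via classical invariant theory. Procesi's first fundamental theorem identifies the generators of $\mathbb{K}[M_n(\mathbb{K})^d]^G$ (for $G=O_n,\,Sp_n$, or $SO_n$) with traces of all words in the generic matrices $X(1),\dots,X(d)$ and their transposes. Restricting to $\mathfrak{g}^d$, the defining relation of $\mathfrak{g}$ converts a transpose into a sign in the orthogonal case and into conjugation by $J$ (harmless under trace) in the symplectic case. Restricting further to $\mathfrak{C}^d_{\mathfrak{g}}$, the mutual commutativity of the $X(k)$'s reorders every monomial as $X(1)^{i_1}\cdots X(d)^{i_d}$, and the identity $\mathrm{tr}(M^t)=\mathrm{tr}(M)$ forces $A_\alpha=0$ whenever $|\alpha|$ is odd; this is precisely the restriction encoded in the index set $S$.

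Next, the inclusion $V\subseteq U$ follows easily. Expanding $F_{\mathfrak{g}}=\det(I_n+M)$ with $M:=\sum_{\alpha\in S}X(1)^{\alpha_1}\cdots X(d)^{\alpha_d}T_\alpha$ via principal minors writes each coefficient of $F_{\mathfrak{g}}$ as a $\mathbb{K}$-polynomial expression in the $A_\alpha$'s, hence a member of $U$. Lemma \ref{lemma:f^2 determines f}(2) then places each $d_w$ inside the same $\mathbb{K}$-subalgebra.

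For the substantive direction $U\subseteq V$, I would argue by induction on the number $m$ of factors in a product $A_{\alpha_1}\cdots A_{\alpha_m}$ with $\alpha_i\in S$. The base cases $m=0$ and $m=1$ are trivial from $1\in V$ and from the identity $A_\alpha=2d_{T_\alpha}$, which is read off by matching coefficients of $T_\alpha$ in $F_{\mathfrak{g}}=(\sqrt{F_{\mathfrak{g}}})^2$. For the inductive step, set $w:=T_{\alpha_1}\cdots T_{\alpha_m}$ and compare coefficients of $w$ on the two sides of $F_{\mathfrak{g}}=(\sqrt{F_{\mathfrak{g}}})^2$: the right-hand side yields $2d_w$ plus a bilinear expression in $d_{w_1}d_{w_2}$ with strictly shorter $w_1, w_2$, while a direct expansion of $\det(I_n+M)$ on the left-hand side produces a nonzero scalar multiple of $A_{\alpha_1}\cdots A_{\alpha_m}$ together with a $\mathbb{K}$-linear combination of products of at most $m-1$ of the $A_\alpha$'s. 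Solving for the target product and invoking the inductive hypothesis on every lower-order term places $A_{\alpha_1}\cdots A_{\alpha_m}$ inside $V$, completing the induction.

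The main obstacle is verifying the leading-term structure used in the inductive step: that the coefficient of $A_{\alpha_1}\cdots A_{\alpha_m}$ in the expansion of $\det(I_n+M)$ at $w$ is a unit of $\mathbb{K}$, and that the correction terms genuinely involve only products of fewer $A$'s. This follows from a cumulant/Newton-type identity; in the generic case of pairwise distinct $\alpha_i$'s the relevant scalar is a power of $\tfrac{1}{2}$, which is invertible by the standing hypothesis $\mathrm{char}\,\mathbb{K}\neq 2$, and the case of repeated indices is handled by a combinatorial refinement of the induction (and, in positive characteristic, by exploiting the Frobenius-type identity $A_{p\alpha}=A_\alpha^p$ that holds on the commuting scheme, which bypasses the factorial denominators that would otherwise appear).
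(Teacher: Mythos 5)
Your overall plan (identify $\mathrm{Im}\,\varphi$ with the trace algebra $U$, then show $U$ equals the span $V$ of $\{1\}\cup\{d_w\}$ by a Newton/cumulant-type induction) is a genuinely different route from the paper, and in characteristic $0$ its core can be made to work; but as written there are two real gaps. First, the inductive step is circular in its treatment of the cross terms: the coefficient of $w$ in $(\sqrt{F_{\mathfrak{g}}})^2$ is $2d_w+\sum_{w_1w_2=w}d_{w_1}d_{w_2}$, and the products $d_{w_1}d_{w_2}$ are \emph{not} ``lower-order terms'' covered by your inductive hypothesis, which is about products of the $A_\alpha$'s, not products of elements of $V$ ($V$ is only a linear span, and its closure under multiplication is essentially what is being proved). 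If you convert each $d_{w_i}$ back into trace polynomials to apply the hypothesis, the cross terms reintroduce the top product $A_{\alpha_1}\cdots A_{\alpha_m}$ with total coefficient $1-2^{1-m}$ (for distinct $\alpha_i$), exactly cancelling against the leading coefficient $1$ on the determinant side except for the contribution $2^{1-m}$ hidden inside $2d_w$; so ``solving for the target product'' only works if you track this cancellation. The clean repair is to expand $\sqrt{F_{\mathfrak{g}}}=\exp\bigl(\tfrac12\operatorname{tr}\log(I_n+M)\bigr)$ directly and read off $d_w=2^{-m}(\prod_j m_j!)^{-1}A_{\alpha_1}\cdots A_{\alpha_m}+(\text{products of at most }m-1\text{ traces})$, then induct — but this is a char-$0$ argument.

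Second, and more seriously, the proposition is needed in characteristic $p>2$ (it feeds into Theorems \ref{thm:char.  p case} and \ref{thm:n even G=SO_n, char.  p case}), and your sketch does not actually cover that case: the identification $\mathrm{Im}\,\varphi=U$ via Procesi's first fundamental theorem is a characteristic-$0$ statement (in characteristic $p$ one must invoke Zubkov/Lopatin, as the paper does), and both your inclusion $V\subseteq U$ (rewriting determinant coefficients as polynomials in traces) and the inductive extraction above require dividing by factorials, which fail for $m\geq p$; the proposed rescue via $A_{p\alpha}=A_\alpha^p$ is asserted but not carried out, and it is far from clear it suffices. The paper avoids all of this with one structural observation you do not use: by multiplicativity of determinants, $F_{\mathfrak{g}}(\underline{t}_1)F_{\mathfrak{g}}(\underline{t}_2)=F_{\mathfrak{g}}(\underline{t}_3)$, and by Lemma \ref{lemma:f^2 determines f, first form} the same holds for $\sqrt{F_{\mathfrak{g}}}$; hence the span of the evaluations (equivalently, of the coefficients) is already closed under multiplication, which upgrades ``generates as a $\mathbb{K}$-algebra'' to ``spans $\mathbb{K}$-linearly'' with no denominators and works uniformly for $\operatorname{char}\mathbb{K}\neq 2$. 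You should either adopt that multiplicativity argument or supply the missing characteristic-$p$ bookkeeping before the inductive approach can be accepted.
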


\begin{proof}
Let $S^{\prime}:=\{(i_1, \cdots, i_d)| i_1, \cdots, i_d\in \mathbb{Z}_{\geq 0}, i_1+\cdots +i_d \textmd{ odd}\} $, and define the following $n\times n$ matrix
 \[A:=I_n+ \sum_{(i_1,\cdots, i_d)\in S\cup S^{\prime}}X(1)^{i_1}\cdots X(d)^{i_d}T_{i_1\cdots i_d}.\]
Let $F ^{\prime}:=\det A$, a formal power series of the variables $\{T_{i_1\cdots i_d}|(i_1,\cdots, i_d)\in S\cup S ^{\prime}\}$. 
By Procesi [\ref{procesi:matrix invariants}] (char.  $0$ case) and Zubkov [\ref{zubkov}]  (char.  $p>2$ case),   $\mathrm{Im}\ \varphi$ is generated as a $\mathbb{K}$-algebra by the coefficients of $\mathrm{tr }\ A$. From this or directly  from Lopatin [\ref{lopatin}, Corollary 2], we deduce that  $\mathrm{Im}\ \varphi$ is generated by the coefficients of   $F ^{\prime}$, and hence by the coefficients of $(F ^{\prime})^2$ according to Lemma \ref{lemma:f^2 determines f} (2). Note if $G=O_n(\mathbb{K})$ or $SO_n(\mathbb{K})$, the matrix $X(1)^{i_1}\cdots X(d)^{i_d}$ is symmetric for $(i_1,\cdots, i_d)\in S$ and  skew-symmetric for $(i_1,\cdots, i_d)\in S ^{\prime}$, so in these cases
\begin{eqnarray*}
(F ^{\prime})^2&&=\det (A  A^t)\\
&&=\det [(I_n+ \sum_{(i_1,\cdots, i_d)\in S}X(1)^{i_1}\cdots X(d)^{i_d}T_{i_1\cdots i_d})^2-\\ 
&&
(\sum_{(i_1,\cdots, i_d)\in S ^{\prime}}X(1)^{i_1}\cdots X(d)^{i_d}T_{i_1\cdots i_d})^2].
\end{eqnarray*}

Similarly if $G=Sp_n(\mathbb{K})$, we have 
\begin{eqnarray*}
(F ^{\prime})^2 &&=\det (A  (JAJ)^t) \\ 
&&=\det [(I_n+ \sum_{(i_1,\cdots, i_d)\in S}X(1)^{i_1}\cdots X(d)^{i_d}T_{i_1\cdots i_d})^2-\\ 
&&(\sum_{(i_1,\cdots, i_d)\in S ^{\prime}}X(1)^{i_1}\cdots X(d)^{i_d}T_{i_1\cdots i_d})^2].
\end{eqnarray*}
In either case, each coefficient of $(F ^{\prime})^2$ is a $\mathbb{K}$-linear combination of the coefficients of  $F_{\mathfrak{g}}=\det (I_n+ \sum_{(i_1,\cdots, i_d)\in S}X(1)^{i_1}\cdots X(d)^{i_d}T_{i_1\cdots i_d})$. Then $\mathrm{Im}\ \varphi$ is generated by the coefficients of $F_{\mathfrak{g}}$, or equivalently, by the evaluations $F_{\mathfrak{g}}(\underline{t})$ $(\underline{t}\in \bigoplus\limits_{S}\mathbb{K})$. By the multiplicative property of determinants, for any $\underline{t}_1, \underline{t}_2\in \bigoplus\limits_{S}\mathbb{K}$, there exists $\underline{t}_3\in \bigoplus\limits_{S}\mathbb{K}$, such that 
\begin{equation}\label{equ:multi phi}
F_{\mathfrak{g}}(\underline{t}_1) F_{\mathfrak{g}}(\underline{t}_2)=F_{\mathfrak{g}}(\underline{t}_3). 
\end{equation}
So $\mathrm{Im}\ \varphi$ is $\mathbb{K}$-linearly spanned by $F_{\mathfrak{g}}(\underline{t})$ $(\underline{t}\in \bigoplus\limits_{S}\mathbb{K})$.

By Lemma \ref{lemma:f^2 determines f} (2), the $\mathbb{K}$-subalgebra of $\mathbb{K}[\mathfrak{C}^d_{\mathfrak{g}}]^G$ generated   by the coefficients of $\sqrt{F_{\mathfrak{g}}}$ coincides with that generated by the coefficients of  $F_{\mathfrak{g}}$, and hence is equal to $\mathrm{Im}\ \varphi$. Moreover, the equality (\ref{equ:multi phi}) implies that $(\sqrt{F_{\mathfrak{g}}}(\underline{t}_1) \sqrt{F_{\mathfrak{g}}}(\underline{t}_2))^2=(\sqrt{F_{\mathfrak{g}}}(\underline{t}_3))^2$. By applying Lemma \ref{lemma:f^2 determines f, first form} to the graded $\mathbb{K}$-algebra $\mathbb{K}[\mathfrak{C}^d_{\mathfrak{g}}]^G$, we obtain $\sqrt{F_{\mathfrak{g}}}(\underline{t}_1) \sqrt{F_{\mathfrak{g}}}(\underline{t}_2)=\sqrt{F_{\mathfrak{g}}}(\underline{t}_3)$. So finally $\mathrm{Im}\ \varphi$ is $\mathbb{K}$-linearly spanned by the coefficients of $\sqrt{F_{\mathfrak{g}}}$.
\end{proof}

The following two lemmas are important,  but technical in nature, so their proofs will be postponed to Section \ref{technical lemmas}. Here and throughout this paper, $\mathcal{M}(\lfloor\frac{n}{2}\rfloor):=\{w\in \mathcal{M}| \deg w\leq \lfloor\frac{n}{2}\rfloor\}$ is the subset of $\mathcal{M}$ consisting of monomials with degree less than or equal to $\lfloor\frac{n}{2}\rfloor$.

\begin{lemma}\label{lemma:deg of sqrt F_t}
 $\deg \sqrt{F_{\mathfrak{t}}}\leq \lfloor\frac{n}{2}\rfloor$, and $\{1\}\cup \{c_w| w\in \mathcal{M}(\lfloor\frac{n}{2}\rfloor)\}$ is a $\mathbb{K}$-linear basis of $\mathbb{K}[\mathfrak{t}^d]^W$. 
\end{lemma}
\begin{lemma}\label{lemma:deg of sqrt F_g}
 $\deg \sqrt{F_{\mathfrak{g}}}\leq \lfloor\frac{n}{2}\rfloor$. 
\end{lemma}
Assuming these lemmas, we can now prove one of  our main theorems.

\begin{theorem}\label{main thm: char.  0}
If $\rm{char} \ \mathbb{K}=0$, then the restriction homomorphism $\Phi: \mathbb{K}[\mathfrak{C}^d_{\mathfrak{g}}]^G\rightarrow \mathbb{K}[\mathfrak{t}^d]^W$ is an isomorphism of $\mathbb{K}$-algebras. 
\end{theorem}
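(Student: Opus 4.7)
The plan is to combine Proposition \ref{prop:image varphi=V}, Lemma \ref{lemma:deg of sqrt F_t}, and Lemma \ref{lemma:deg of sqrt F_g} with the surjectivity of $\varphi$ in characteristic zero, in order to identify $\Phi$ as a bijection between bases.

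First I would verify that $\varphi: \mathbb{K}[M_n(\mathbb{K})^d]^G \to \mathbb{K}[\mathfrak{C}^d_{\mathfrak{g}}]^G$ is surjective. Since $\mathfrak{g}$ is a linear subspace of $M_n(\mathbb{K})$ and $\mathbb{K}[\mathfrak{C}^d_{\mathfrak{g}}]$ is by construction a quotient of $\mathbb{K}[\mathfrak{g}^d]$, the composite $\mathbb{K}[M_n(\mathbb{K})^d] \twoheadrightarrow \mathbb{K}[\mathfrak{g}^d] \twoheadrightarrow \mathbb{K}[\mathfrak{C}^d_{\mathfrak{g}}]$ is surjective. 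Because $G$ is reductive and $\rm{char}\ \mathbb{K}=0$, the functor of $G$-invariants is exact, whence $\varphi$ is surjective. Proposition \ref{prop:image varphi=V} then says that $\mathbb{K}[\mathfrak{C}^d_{\mathfrak{g}}]^G$ is $\mathbb{K}$-linearly spanned by $\{1\}\cup\{d_w\mid w\in \mathcal{M}\}$, and Lemma \ref{lemma:deg of sqrt F_g} trims this spanning set down to $\{1\}\cup\{d_w\mid w\in \mathcal{M}(\lfloor n/2\rfloor)\}$.

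Next I would compare $\sqrt{F_{\mathfrak{g}}}$ with $\sqrt{F_{\mathfrak{t}}}$. Applying $\Phi$ to the identity $\sqrt{F_{\mathfrak{g}}}^{\,2}=F_{\mathfrak{g}}$ and using $\Phi(F_{\mathfrak{g}})=F_{\mathfrak{t}}$ yields $\Phi(\sqrt{F_{\mathfrak{g}}})^2=F_{\mathfrak{t}}=\sqrt{F_{\mathfrak{t}}}^{\,2}$, and both sides have constant coefficient $1$. Lemma \ref{lemma:f^2 determines f}(1) then forces $\Phi(\sqrt{F_{\mathfrak{g}}})=\sqrt{F_{\mathfrak{t}}}$, so that $\Phi(d_w)=c_w$ for each $w\in \mathcal{M}(\lfloor n/2\rfloor)$. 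By Lemma \ref{lemma:deg of sqrt F_t}, the image set $\{1\}\cup\{c_w\mid w\in \mathcal{M}(\lfloor n/2\rfloor)\}$ is a $\mathbb{K}$-linear basis of $\mathbb{K}[\mathfrak{t}^d]^W$. Any $\mathbb{K}$-linear map sending a spanning set of the source onto a linearly independent set of the target is automatically injective, while surjectivity onto $\mathbb{K}[\mathfrak{t}^d]^W$ is immediate since the $c_w$ already span. Hence $\Phi$ is an isomorphism, and as a byproduct $\{1\}\cup\{d_w\mid w\in \mathcal{M}(\lfloor n/2\rfloor)\}$ is in fact a basis of $\mathbb{K}[\mathfrak{C}^d_{\mathfrak{g}}]^G$.

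The genuine technical difficulties in this approach have been absorbed into the two postponed lemmas; the construction of the explicit square root $N$ underlying Lemma \ref{lemma:deg of sqrt F_g} is, as flagged in the introduction, the hard part. The only place where the hypothesis $\rm{char}\ \mathbb{K}=0$ enters is in securing the surjectivity of $\varphi$ via linear reductivity, which is precisely what will fail in positive characteristic and force the weaker statement (the reduced commuting scheme) in the $\rm{char}\ \mathbb{K}>2$ analogue.
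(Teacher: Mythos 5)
Your proposal is correct and follows essentially the same route as the paper's proof: surjectivity of $\varphi$ from linear reductivity in characteristic zero, Proposition \ref{prop:image varphi=V} plus Lemma \ref{lemma:deg of sqrt F_g} to get the spanning set $\{1\}\cup\{d_w\mid w\in \mathcal{M}(\lfloor\frac{n}{2}\rfloor)\}$, the square-root uniqueness of Lemma \ref{lemma:f^2 determines f} to obtain $\Phi(d_w)=c_w$, and Lemma \ref{lemma:deg of sqrt F_t} to conclude that $\Phi$ carries a spanning set bijectively onto a basis. No gaps; the spanning-set-to-basis argument you spell out is exactly the paper's final step.
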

\begin{proof}

By Lemma \ref{lemma:deg of sqrt F_t} and \ref{lemma:deg of sqrt F_g}, the expansions (\ref{equ:expansion F_t, F_g}) reduce to the following:
\begin{equation}\label{equ:final expansion F_t, F_g}
  \sqrt{F_{\mathfrak{g}}}=1+\sum\limits_{w\in \mathcal{M}(\lfloor\frac{n}{2}\rfloor)}d_w w, \ \ \  \sqrt{F_{\mathfrak{t}}}= 1+\sum\limits_{w\in \mathcal{M}(\lfloor\frac{n}{2}\rfloor)} c_w w .
\end{equation} 
Since  $\Phi(\sqrt{F_{\mathfrak{g}}})^2=\Phi(F_{\mathfrak{g}})=F_{\mathfrak{t}}=(\sqrt{F_{\mathfrak{t}}})^2$, and both the constant coefficients of $\Phi(\sqrt{F_{\mathfrak{g}}})$, \ $\sqrt{F_{\mathfrak{t}}}$ are  $1$, we deduce from Lemma \ref{lemma:f^2 determines f} that $\Phi(\sqrt{F_{\mathfrak{g}}})=\sqrt{F_{\mathfrak{t}}}$. So 
\begin{equation}\label{equ:d_w, c_w}
\Phi(d_w)=c_w, \quad \text{ for all } w\in \mathcal{M}(\lfloor\frac{n}{2}\rfloor). 
\end{equation}

Since   $\rm{char}\ \mathbb{K}=0$, the reductive group $G$ is linearly  reductive. Then $\varphi: \mathbb{K}[M_n(\mathbb{K})^d]^G \rightarrow\mathbb{K}[\mathfrak{C}^d_{\mathfrak{g}}]^G$ is surjective, because it is induced from  the surjective restriction homomorphism $\mathbb{K}[M_n(\mathbb{K})^d]\twoheadrightarrow \mathbb{K}[\mathfrak{C}^d_{\mathfrak{g}}]$. 
By Proposition \ref{prop:image varphi=V} and \eqref{equ:final expansion F_t, F_g}, $\mathbb{K}[\mathfrak{C}^d_{\mathfrak{g}}]^G$ is $\mathbb{K}$-linearly spanned by $\{1\}\cup\{d_w| w\in \mathcal{M}(\lfloor\frac{n}{2}\rfloor)\}$. Then by Lemma \ref{lemma:deg of sqrt F_t} and \eqref{equ:d_w, c_w},  the homomorphism $\Phi$  maps a $\mathbb{K}$-linearly spanning set bijectively to a $\mathbb{K}$-linear basis, hence it is an isomorphism.

\end{proof}

Let $\mathbb{K}[\mathfrak{C}^d_{\mathfrak{g},red}]:= \mathbb{K}[\mathfrak{C}^d_{\mathfrak{g}}]/\sqrt{(0)}$ be the quotient of $\mathbb{K}[\mathfrak{C}^d_{\mathfrak{g}}]$ by its nilpotent radical. This is the coordinate ring of the commuting variety $\mathfrak{C}^d_{\mathfrak{g},red}$.  As $\mathbb{K}[\mathfrak{t}^d]^W$ is reduced, $\Phi: \mathbb{K}[\mathfrak{C}^d_{\mathfrak{g}}]^G\rightarrow \mathbb{K}[\mathfrak{t}^d]^W$ factors through $\mathbb{K}[\mathfrak{C}^d_{\mathfrak{g},red}]^G$. 

\begin{theorem}\label{thm:char.  p case}
If $\rm{char} \ \mathbb{K}=p>2$, then the restriction homomorphism $\Phi: \mathbb{K}[\mathfrak{C}^d_{\mathfrak{g},red}]^G\rightarrow \mathbb{K}[\mathfrak{t}^d]^W$ is an isomorphism of $\mathbb{K}$-algebras. 
\end{theorem}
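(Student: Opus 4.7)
The strategy is to recycle the entire proof of Theorem \ref{main thm: char.  0}, isolating the single place where linear reductivity was essential and bypassing it after passing to the nilreduction. Examining that proof, the construction $V := \mathrm{Im}(\varphi) \subset \mathbb{K}[\mathfrak{C}^d_{\mathfrak{g}}]^G$ and the identification $\Phi|_V \colon V \xrightarrow{\sim} \mathbb{K}[\mathfrak{t}^d]^W$ (sending $d_w \mapsto c_w$) go through verbatim when $\mathrm{char}\,\mathbb{K} = p > 2$: Proposition \ref{prop:image varphi=V} already cites Zubkov's version of Procesi's theorem for $p>2$, and Lemmas \ref{lemma:deg of sqrt F_t}, \ref{lemma:deg of sqrt F_g} are characteristic-free (modulo $p\neq 2$). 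Linear reductivity was used only to deduce the surjectivity of $\varphi$, i.e., $V = \mathbb{K}[\mathfrak{C}^d_{\mathfrak{g}}]^G$, which can fail in positive characteristic.

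Let $V_{\mathrm{red}}$ denote the image of $V$ in $\mathbb{K}[\mathfrak{C}^d_{\mathfrak{g},red}]^G$ under the canonical surjection $\mathbb{K}[\mathfrak{C}^d_{\mathfrak{g}}] \twoheadrightarrow \mathbb{K}[\mathfrak{C}^d_{\mathfrak{g},red}]$. The composition
\[ V \twoheadrightarrow V_{\mathrm{red}} \hookrightarrow \mathbb{K}[\mathfrak{C}^d_{\mathfrak{g},red}]^G \xrightarrow{\Phi_{\mathrm{red}}} \mathbb{K}[\mathfrak{t}^d]^W \]
equals $\Phi|_V$ and is an isomorphism, so $V \to V_{\mathrm{red}}$ is itself an isomorphism and $\Phi_{\mathrm{red}}|_{V_{\mathrm{red}}}$ is an isomorphism onto $\mathbb{K}[\mathfrak{t}^d]^W$. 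Hence $\Phi_{\mathrm{red}}$ is surjective and admits a $\mathbb{K}$-linear section with image $V_{\mathrm{red}}$, yielding
\[ \mathbb{K}[\mathfrak{C}^d_{\mathfrak{g},red}]^G = V_{\mathrm{red}} \oplus \ker(\Phi_{\mathrm{red}}). \]
The theorem reduces to showing $\ker(\Phi_{\mathrm{red}}) = 0$, equivalently that the morphism $\psi \colon \mathfrak{t}^d/\!\!/W \to \mathfrak{C}^d_{\mathfrak{g},red}/\!\!/G$ induced by $\Phi_{\mathrm{red}}$ is dominant.

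For dominance, since $G$ is geometrically reductive (Haboush), $\mathbb{K}[\mathfrak{C}^d_{\mathfrak{g},red}]^G$ is a finitely generated $\mathbb{K}$-algebra; the categorical quotient $\mathfrak{C}^d_{\mathfrak{g},red}/\!\!/G$ is therefore a reduced Jacobson scheme of finite type over $\mathbb{K}$, and dominance of $\psi$ is equivalent to its surjectivity on $\mathbb{K}$-points. The $\mathbb{K}$-points of $\mathfrak{C}^d_{\mathfrak{g},red}/\!\!/G$ parametrize closed $G$-orbits on $\mathfrak{C}^d_{\mathfrak{g},red}$; by the Hilbert--Mumford criterion each such orbit contains a $d$-tuple of pairwise commuting semisimple elements of $\mathfrak{g}$, and for $\mathfrak{g}=\mathfrak{so}_n$ or $\mathfrak{sp}_n$ with $p>2$ such commuting semisimples are simultaneously diagonalizable over $\mathbb{K}$, hence lie in a common Cartan subalgebra, which is in turn $G$-conjugate to $\mathfrak{t}$. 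Therefore every closed $G$-orbit on $\mathfrak{C}^d_{\mathfrak{g},red}$ meets $\mathfrak{t}^d$, and $\psi$ is surjective on closed points.

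The main obstacle is the closing chain of classical facts in the last paragraph: the characterization of closed $G$-orbits on the reduced commuting scheme as those of pairwise commuting semisimple tuples, and the simultaneous-conjugation statement for such tuples in characteristic $p>2$. For the classical Lie algebras at hand these are folklore in good characteristic, but they rely on genuine input from geometric invariant theory in positive characteristic and the structure of centralizers of regular semisimple elements of $\mathfrak{so}_n$ and $\mathfrak{sp}_n$, and should be formulated with precise references rather than invoked as routine.
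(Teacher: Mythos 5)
Your first half — identifying $V_{\mathrm{red}}=\mathrm{Im}(\pi\circ\varphi)$ in $\mathbb{K}[\mathfrak{C}^d_{\mathfrak{g},red}]^G$ and noting that $\Phi$ restricts to a linear isomorphism of it onto $\mathbb{K}[\mathfrak{t}^d]^W$ (via Proposition \ref{prop:image varphi=V} with Zubkov's theorem, and Lemmas \ref{lemma:deg of sqrt F_t}, \ref{lemma:deg of sqrt F_g}, which indeed only need $p\neq 2$) — is exactly the paper's argument, and it correctly yields surjectivity. The divergence, and the genuine gap, is in your injectivity step. The chain you invoke — that the unique closed $G$-orbit in each fiber is the orbit of a commuting semisimple tuple, and that commuting semisimple elements of $\mathfrak{so}_n$ or $\mathfrak{sp}_n$ are $G$-conjugate into the fixed Cartan $\mathfrak{t}$ — does not follow from merely citing the Hilbert--Mumford criterion. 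One must take Jordan decompositions of the commuting tuple, show the tuple of semisimple parts lies in the orbit closure (which requires producing a single cocharacter of the centralizer of the semisimple parts that simultaneously contracts all the commuting nilpotent parts, i.e.\ placing them in the nilradical of a Borel of that centralizer compatibly with the bilinear form), and conjugate a simultaneous eigenspace decomposition into $\mathfrak{t}$ while respecting the form. All of this is true for $p>2$, but in positive characteristic it is a real piece of work or needs precise references (Richardson-type results, $G$-complete reducibility), and you concede as much in your final paragraph; as written the proof is incomplete at precisely this point. (A minor slip: dominance of $\psi$ is implied by, not equivalent to, surjectivity on $\mathbb{K}$-points; harmless, since you only use the implication you need. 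Also note that the weaker statement ``every $G$-orbit closure in $\mathfrak{C}^d_{\mathfrak{g},red}$ meets $\mathfrak{t}^d$'' already suffices for your dominance argument, since invariants are constant on orbit closures — you never need the closed orbit itself to consist of semisimple tuples.)

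The paper closes injectivity by a purely algebraic device that bypasses all orbit geometry: since $\mathbb{K}[M_n(\mathbb{K})^d]\twoheadrightarrow \mathbb{K}[\mathfrak{C}^d_{\mathfrak{g},red}]$ and $G$ is geometrically reductive, Mumford--Fogarty--Kirwan [\ref{GIT}, Lemma A.1.2] provides, for every $f\in \mathbb{K}[\mathfrak{C}^d_{\mathfrak{g},red}]^G$, an integer $m\geq 1$ with $f^m\in \mathrm{Im}(\pi\circ\varphi)$. If $\Phi(f)=0$ then $\Phi(f^m)=0$, hence $f^m=0$ because $\Phi$ is injective on $\mathrm{Im}(\pi\circ\varphi)$, and then $f=0$ by reducedness of $\mathbb{K}[\mathfrak{C}^d_{\mathfrak{g},red}]^G$. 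Substituting this two-line argument for your last paragraph completes your proof with no structure theory of commuting tuples in characteristic $p$; if you prefer to keep the geometric route, you must actually prove (or precisely cite) the orbit-closure statement above rather than invoke it as routine.
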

\begin{proof}

Let $\pi: \mathbb{K}[\mathfrak{C}^d_{\mathfrak{g}}]^G\rightarrow \mathbb{K}[\mathfrak{C}^d_{\mathfrak{g},red}]^G$ be the homomorphism induced by the natural quotient homomorphism $\mathbb{K}[\mathfrak{C}^d_{\mathfrak{g}}]\rightarrow \mathbb{K}[\mathfrak{C}^d_{\mathfrak{g},red}]$. Consider the composition of homomorphisms:
\begin{displaymath}
\mathbb{K}[M_n(\mathbb{K})^d]^G\xrightarrow{\varphi}\mathbb{K}[\mathfrak{C}^d_{\mathfrak{g}}]^G\xrightarrow{\pi} \mathbb{K}[\mathfrak{C}^d_{\mathfrak{g},red}]^G. 
\end{displaymath}

By the same arguments as above, we can see that  when restricted on the subspace $\mathrm{ Im }\ (\pi\circ \varphi)$,  $\Phi$ induces an isomorphism $\mathrm{ Im }\ (\pi\circ \varphi)\xrightarrow{\sim} \mathbb{K}[\mathfrak{t}^d]^W$ of $\mathbb{K}$-linear spaces. In particular, $\Phi: \mathbb{K}[\mathfrak{C}^d_{\mathfrak{g},red}]^G\rightarrow \mathbb{K}[\mathfrak{t}^d]^W$ is surjective. 

For any $f\in \mathbb{K}[\mathfrak{C}^d_{\mathfrak{g},red}]^G$, it follows from Mumford-Fogarty-Kirwan [\ref{GIT}, Lemma A.1.2] that  there exists $m\geq 1$, such that $f^m\in \mathrm{ Im }~ (\pi\circ \varphi)$. If $\Phi(f)=0$, then $\Phi(f^m)=0$. Since $f^m \in \mathrm{ Im }~ (\pi\circ \varphi)$ and $\Phi|_{\mathrm{ Im }~ (\pi\circ \varphi)}$ is an isomorphism, we obtain $f^m=0$. This implies $f=0$ as the ring $\mathbb{K}[\mathfrak{C}^d_{\mathfrak{g},red}]^G$ is reduced. So we obtain $\Phi$ is injective. This in turn implies that $\Phi: \mathbb{K}[\mathfrak{C}^d_{\mathfrak{g},red}]^G\rightarrow \mathbb{K}[\mathfrak{t}^d]^W$ is an isomorphism.

\end{proof}

\section{Main theorems: type $D$ case} \label{sec:The n even and G=SO_n case}
In this section, keeping the same notations as in Section \ref{sec:main theorems}, we assume furthermore that $n\geq 2$ is even and  $G=O_n(\mathbb{K})$.  Let   $G^{\prime}:=SO_n(\mathbb{K})$, and define the corresponding  Weyl group by $W^{\prime}:=N_{G^{\prime}}(T)/T$.  Note both $G^{\prime}\subset G$ and $W^{\prime}\subset W=N_{G}(T)/T$ are subgroups of index two. We have the following commutative diagram.
\begin{displaymath}
\begin{diagram}
\mathbb{K}[\mathfrak{C}^d_{\mathfrak{g}}]^{G^{\prime}} &\rTo^{\ \ \ \ \Phi \ \ \ \ } &\mathbb{K}[\mathfrak{t}^d]^{W^{\prime}}\\
\uInto_{} & &\uInto_{}\\
\mathbb{K}[\mathfrak{C}^d_{\mathfrak{g}}]^G & \rTo^{\ \ \ \ \Phi \ \ \ \ }&\mathbb{K}[\mathfrak{t}^d]^W
\end{diagram}
\end{displaymath}

Take $w_0\in W$ which generates the quotient group $W/W^{\prime}\simeq \mathbb{Z}/2\mathbb{Z}$. Note $W/W^{\prime}$ acts naturally on $\mathbb{K}[\mathfrak{t}^d]^{W^{\prime}}$ and we have the eigen-subspace decomposition
\[\mathbb{K}[\mathfrak{t}^d]^{W^{\prime}}=\mathbb{K}[\mathfrak{t}^d]^{W^{\prime}}_{(0)}\oplus \mathbb{K}[\mathfrak{t}^d]^{W^{\prime}}_{(1)},\]
where  $\mathbb{K}[\mathfrak{t}^d]^{W^{\prime}}_{(0)}=\mathbb{K}[\mathfrak{t}^d]^W$ is the invariant part and 
\[\mathbb{K}[\mathfrak{t}^d]^{W^{\prime}}_{(1)}=\{v\in \mathbb{K}[\mathfrak{t}^d]^{W^{\prime}}|w_0 v=-v\}.\]
 Similarly, the action of $G/G^{\prime}\simeq \mathbb{Z}/2\mathbb{Z} $ on $\mathbb{K}[\mathfrak{C}^d_{\mathfrak{g}}]^{G^{\prime}}$ induces the eigen-subspace decomposition $\mathbb{K}[\mathfrak{C}^d_{\mathfrak{g}}]^{G^{\prime}}=\mathbb{K}[\mathfrak{C}^d_{\mathfrak{g}}]^{G^{\prime}}_{(0)}\oplus \mathbb{K}[\mathfrak{C}^d_{\mathfrak{g}}]^{G^{\prime}}_{(1)}$, with  $\mathbb{K}[\mathfrak{C}^d_{\mathfrak{g}}]^{G^{\prime}}_{(0)}=\mathbb{K}[\mathfrak{C}^d_{\mathfrak{g}}]^G$  and $\mathbb{K}[\mathfrak{C}^d_{\mathfrak{g}}]^{G^{\prime}}_{(1)}=\{v\in \mathbb{K}[\mathfrak{C}^d_{\mathfrak{g}}]^{G^{\prime}}|g_0 v=-v\}$, where $g_0$ is a generator of $G/G ^{\prime}$. Clearly the restriction homomorphism $\Phi$ preserves these decompositions: $\Phi(\mathbb{K}[\mathfrak{C}^d_{\mathfrak{g}}]^{G^{\prime}}_{(i)})\subset \mathbb{K}[\mathfrak{t}^d]^{W^{\prime}}_{(i)}$, \ $i=0,1$.

Let $S^{\prime}:=\{(i_1, \cdots, i_d)| i_1, \cdots, i_d\in \mathbb{Z}_{\geq 0}, i_1+\cdots +i_d \textmd{ odd}\} $,  and $\mathcal{M}^{\prime}(\frac{n}{2})$ be the set of non-empty monomials in the variables $\{T_{i_1 \cdots i_d}| (i_1, \cdots, i_d)\in S^{\prime}\}$ whose degree is less than or equal to $\frac{n}{2}$.
Define the following formal power series:
\begin{displaymath}
\begin{split}
H_{\mathfrak{g}}&:={\rm Pf} (\sum\limits_{(i_1,\cdots, i_d)\in S^{\prime }}X(1)^{i_1}X(2)^{i_2}\cdots X(d)^{i_d}T_{i_1\cdots i_d}),\\ 
H_{\mathfrak{t}}&:={\rm Pf} (\sum\limits_{(i_1,\cdots, i_d)\in S^{\prime }}Y(1)^{i_1}Y(2)^{i_2}\cdots Y(d)^{i_d}T_{i_1\cdots i_d}),
\end{split}
\end{displaymath}
with $H_{\mathfrak{g}}\in \mathbb{K}[\mathfrak{C}^d_{\mathfrak{g}}][[T_{i_1\cdots i_d}|(i_1,\cdots, i_d)\in S^{\prime}]]$,  $H_{\mathfrak{t}}\in \mathbb{K}[\mathfrak{t}^d][[T_{i_1\cdots i_d}|(i_1,\cdots, i_d)\in S^{\prime}]]$.
Here ${\rm Pf}(A)$ means the Pfaffian of  a skew symmetric matrix $A$. 
Note for any $P\in G=O_n(\mathbb{K})$, 
\[P\cdot H_{\mathfrak{g}}={\rm Pf} (\sum\limits_{(i_1,\cdots, i_d)\in S^{\prime }}P^{-1 } X(1)^{i_1}\cdots X(d)^{i_d} P T_{i_1\cdots i_d})=\det P \cdot  H_{\mathfrak{g}},\]
 so $H_{\mathfrak{g}}\in \mathbb{K}[\mathfrak{C}^d_{\mathfrak{g}}]_{(1)}^{G^{\prime}}[[T_{i_1\cdots i_d}|(i_1,\cdots, i_d)\in S]]$. In a similar way, we can see that  $H_{\mathfrak{t}}\in \mathbb{K}[\mathfrak{t}^d]_{(1)}^{W^{\prime}}[[T_{i_1\cdots i_d}|(i_1,\cdots, i_d)\in S^{\prime}]]$.
By  constructions, $\deg H_{\mathfrak{g}}\leq \frac{n}{2}$ and  $\deg H_{\mathfrak{t}}\leq \frac{n}{2}$, so we can write
\begin{equation}\label{equ:expansion H_t, H_g}
  H_{\mathfrak{g}}=\sum\limits_{w\in \mathcal{M}^{\prime}(\frac{n}{2})}d^{\prime }_w w,  \ \  \ H_{\mathfrak{t}}= \sum\limits_{w\in \mathcal{M}^{\prime}(\frac{n}{2})} c^{\prime}_w w, 
\end{equation} 
with    $d ^{\prime}_w\in \mathbb{K}[\mathfrak{C}^d_{\mathfrak{g}}]_{(1)}^{G ^{\prime}}$, \ $c ^{\prime}_w\in \mathbb{K}[\mathfrak{t}^d]_{(1)}^{W ^{\prime}}$.

We postpone the proof of  the next lemma to Section \ref{technical lemmas}.
\begin{lemma}\label{lemma:basis of K[t]^W_(1)}
The set $\{c ^{\prime}_w | w\in \mathcal{M} ^{\prime}(\frac{n}{2})\}$ is a $\mathbb{K}$-linear basis of $\mathbb{K}[\mathfrak{t}^d]^{W^{\prime}}_{(1)}$.
\end{lemma}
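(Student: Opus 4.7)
The plan is to use the standard block-diagonal realization of $\mathfrak{t}\subset\mathfrak{so}_n$, which expresses the Pfaffian $H_{\mathfrak{t}}$ as an explicit product of $m=n/2$ linear factors, and then to match its coefficients with the classical orbit-sum basis of a space of $S_m$-symmetric functions.

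First, I would realize $\mathfrak{t}$ as block diagonal matrices with $2\times 2$ blocks $y_{kj}J$, where $J=\left(\begin{smallmatrix}0 & 1\\ -1 & 0\end{smallmatrix}\right)$ satisfies $J^2=-I_2$. A direct computation, using multiplicativity of the Pfaffian on block-diagonal skew-symmetric matrices and $\mathrm{Pf}(\alpha J)=\alpha$, yields
\[
H_{\mathfrak{t}} \;=\; \prod_{j=1}^{m}\alpha_j,\qquad \alpha_j \;=\; \sum_{(i)\in S'} (-1)^{(\sum_k i_k -1)/2}\, y_{1j}^{i_1}\cdots y_{dj}^{i_d}\, T_{(i)}.
\]
Since each $\alpha_j$ is linear in the $T$-variables, $H_{\mathfrak{t}}$ is homogeneous of degree exactly $m$ in the $T_{(i)}$; in particular $c'_w=0$ whenever $\deg w<n/2$, and the content of the lemma is that the coefficients indexed by $T$-monomials of degree exactly $n/2$ form a basis of $\mathbb{K}[\mathfrak{t}^d]^{W'}_{(1)}$.

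To identify this target space, I would use the tensor decomposition $\mathbb{K}[\mathfrak{t}^d]=\bigotimes_{j=1}^m R_j$ with $R_j=\mathbb{K}[y_{1j},\ldots,y_{dj}]=R_j^+\oplus R_j^-$ the $\pm 1$-eigenspace decomposition under $\mathbf{y}_j\mapsto -\mathbf{y}_j$. Since $W=(\mathbb{Z}/2)^m\rtimes S_m$ and $W'$ consists of even sign changes together with all permutations, a short case analysis shows that the only summands $\bigotimes_j R_j^{\epsilon_j}$ fixed by $(\mathbb{Z}/2)^m_{\mathrm{even}}$ are the all-plus and all-minus ones, on which the $W/W'$-generator $w_0$ acts by $+1$ and $-1$ respectively. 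Taking $S_m$-invariants gives
\[
\mathbb{K}[\mathfrak{t}^d]^{W'}_{(1)} \;=\; \Big(\bigotimes_{j=1}^m R_j^-\Big)^{S_m},
\]
which admits a $\mathbb{K}$-basis consisting of $S_m$-orbit sums of pure tensor monomials $\bigotimes_j y_j^{I_j}$ (with $y_j^{I_j}:=y_{1j}^{(I_j)_1}\cdots y_{dj}^{(I_j)_d}$), indexed by unordered multisets $\{I_1,\ldots,I_m\}$ of size $m$ drawn from $S'$.

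Finally, I would match the two descriptions. Expanding $\prod_j\alpha_j$ and collecting by the $T$-monomial $w=\prod_i T_{K_i}^{a_i}$ (with $\sum_i a_i=m$ and $K_i\in S'$ distinct), the sign in each factor depends only on the tuple $(i)$ and hence factors out:
\[
c'_w \;=\; \Big(\prod_i (-1)^{a_i(|K_i|-1)/2}\Big)\,\cdot\, \sum_{(I_1,\ldots,I_m)\ \text{ordering of}\ \{K_i^{a_i}\}}\prod_{j=1}^m y_j^{I_j},
\]
where $|K_i|$ denotes the sum of entries of $K_i$. This is a nonzero scalar multiple of the $S_m$-orbit sum attached to the multiset $\{K_1^{a_1},\ldots,K_r^{a_r}\}$. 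Since degree-$m$ monomials in $\{T_{(i)}\}_{(i)\in S'}$ are in bijection with size-$m$ multisets from $S'$, the non-vanishing $c'_w$ are (up to nonzero scalars) exactly the orbit-sum basis, proving the lemma. I expect the main obstacle to be the clean tensor-decomposition identification of $\mathbb{K}[\mathfrak{t}^d]^{W'}_{(1)}$ and the bookkeeping of signs in the final expansion.
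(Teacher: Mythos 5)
Your proposal is correct and follows essentially the same route as the paper: choose the block-diagonal Cartan subalgebra so that $H_{\mathfrak{t}}$ factors as a product of $m$ linear forms in the $T$-variables, and identify its coefficients (up to a global nonzero scalar) with the $S_m$-orbit-sum basis of the subspace of $\mathbb{K}[\mathfrak{t}^d]^{W'}$ on which a single sign change acts by $-1$, which is exactly the paper's basis $\{a_\lambda\mid\lambda\in\Lambda^{+}_{odd}\}$. The only cosmetic differences are that the paper normalizes the blocks with $\sqrt{-1}$ so the signs collapse into $(\sqrt{-1})^m$, and it carries out your tensor-eigenspace identification of $\mathbb{K}[\mathfrak{t}^d]^{W'}_{(1)}$ by a monomial-by-monomial argument as in its proof of Lemma \ref{lemma:deg of sqrt F_t}; your explicit remark that $c'_w=0$ for $\deg w<n/2$ is a welcome clarification of the statement.
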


Now consider the composition of homomorphisms:
\begin{displaymath}
\mathbb{K}[M_n(\mathbb{K})^d]^{G ^{\prime}}\xrightarrow{ \ \varphi \ } \mathbb{K}[\mathfrak{C}^d_{\mathfrak{g}}]^{G^{\prime}} \xrightarrow{\ \pi_1 \ } \mathbb{K}[\mathfrak{C}^d_{\mathfrak{g}}]^{G^{\prime}}_{(1)},
\end{displaymath}
where $\varphi$ is the restriction homomorphism and $\pi_1$ is the projection under the decomposition $\mathbb{K}[\mathfrak{C}^d_{\mathfrak{g}}]^{G^{\prime}}=\mathbb{K}[\mathfrak{C}^d_{\mathfrak{g}}]^{G^{\prime}}_{(0)}\oplus \mathbb{K}[\mathfrak{C}^d_{\mathfrak{g}}]^{G^{\prime}}_{(1)} $. 

\begin{proposition}\label{prop:Im=V_(1)}
The image of $\pi_1\circ \varphi$ is $\mathbb{K}$-linearly spanned by $\{d ^{\prime}_{w}| w\in \mathcal{M} ^{\prime}(\frac{n}{2})\}$.
\end{proposition}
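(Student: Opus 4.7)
The plan is to mirror the proof of Proposition~\ref{prop:image varphi=V}, replacing determinants by Pfaffians. The strategy has three pieces: (i) apply an $SO_n$-FFT to get a $\mathbb{K}$-algebra generating set of $\mathrm{Im}\,\varphi$, and observe that on $\mathfrak{C}_{\mathfrak{g}}^d$ the Pfaffian generators collapse to evaluations of $H_\mathfrak{g}$; (ii) establish a multiplicativity identity $F_\mathfrak{g}(\underline{t}_1)\,H_\mathfrak{g}(\underline{t}_2)=H_\mathfrak{g}(\underline{t}_3)$ via the Pfaffian transformation rule; (iii) reduce products in $\mathrm{Im}\,\varphi$ to single evaluations, project onto the $(1)$-eigenspace, and conclude by interpolation.

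For (i), invoke the first fundamental theorem for $SO_n$-invariants of $d$-tuples of matrices (Procesi in characteristic $0$; Zubkov/Lopatin in characteristic $p>2$): $\mathbb{K}[M_n(\mathbb{K})^d]^{G^{\prime}}$ is generated as a $\mathbb{K}$-algebra by the $O_n$-invariants (repackaged via $F_\mathfrak{g}$ as in Proposition~\ref{prop:image varphi=V}) together with Pfaffians $\mathrm{Pf}(A_{\underline{i}}-A_{\underline{i}}^t)$ of skew-symmetrizations of products of the matrices. Restricting to $\mathfrak{C}_{\mathfrak{g}}^d$ where $X(k)^t=-X(k)$ and the $X(k)$'s commute, the product $X(1)^{i_1}\cdots X(d)^{i_d}$ is symmetric for $(i_1,\ldots,i_d)\in S$ and skew-symmetric for $(i_1,\ldots,i_d)\in S^{\prime}$; the skew-symmetrization $A_{\underline{i}}-A_{\underline{i}}^t$ vanishes in the former case and equals $2X(1)^{i_1}\cdots X(d)^{i_d}$ in the latter. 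Thus the Pfaffian generators become (up to a fixed constant) the evaluations $H_\mathfrak{g}(\underline{t})$, $\underline{t}\in\bigoplus_{S^{\prime}}\mathbb{K}$, and $\mathrm{Im}\,\varphi$ is $\mathbb{K}$-algebra-generated by $\{F_\mathfrak{g}(\underline{t}_1)\}\cup\{H_\mathfrak{g}(\underline{t}_2)\}$.

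For (ii), given $\underline{t}_1$ and $\underline{t}_2$ let $P$ and $Q$ be the matrices obtained by evaluating
\[
I_n+\sum_{(i_1,\ldots,i_d)\in S}X(1)^{i_1}\cdots X(d)^{i_d}T_{i_1\cdots i_d}\quad\text{and}\quad\sum_{(j_1,\ldots,j_d)\in S^{\prime}}X(1)^{j_1}\cdots X(d)^{j_d}T_{j_1\cdots j_d}
\]
at $\underline{t}_1$ and $\underline{t}_2$ respectively, so $\det(P)=F_\mathfrak{g}(\underline{t}_1)$ and $\mathrm{Pf}(Q)=H_\mathfrak{g}(\underline{t}_2)$. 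Then $P=P^t$, $Q=-Q^t$, and $PQP$ is skew-symmetric; the Pfaffian transformation rule $\mathrm{Pf}(BAB^t)=\det(B)\,\mathrm{Pf}(A)$ applied to $B=P$, $A=Q$ yields $\mathrm{Pf}(PQP)=F_\mathfrak{g}(\underline{t}_1)\,H_\mathfrak{g}(\underline{t}_2)$. Because the $X(k)$'s commute, $PQP=P^2Q$ expands (inside $M_n(\mathbb{K}[\mathfrak{C}_{\mathfrak{g}}^d])$) as a $\mathbb{K}$-linear combination of products $X(1)^{l_1}\cdots X(d)^{l_d}$ with $(l_1,\ldots,l_d)\in S^{\prime}$; the resulting coefficient vector defines $\underline{t}_3\in\bigoplus_{S^{\prime}}\mathbb{K}$ with $F_\mathfrak{g}(\underline{t}_1)\,H_\mathfrak{g}(\underline{t}_2)=H_\mathfrak{g}(\underline{t}_3)$.

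For (iii), combine (ii) with the multiplicativity $F_\mathfrak{g}(\underline{t}_1)F_\mathfrak{g}(\underline{t}_2)=F_\mathfrak{g}(\underline{t}_3)$ from Proposition~\ref{prop:image varphi=V} and with the observation that each $H_\mathfrak{g}(\underline{t}_1)H_\mathfrak{g}(\underline{t}_2)$ lifts to an $O_n$-invariant polynomial on $M_n(\mathbb{K})^d$ (since $\mathrm{Pf}(A-A^t)$ transforms under $O_n$ by $\det(g)\in\{\pm 1\}$), hence lies in $\mathrm{Im}\,\varphi|_G$ and is a $\mathbb{K}$-combination of $F_\mathfrak{g}$-evaluations by Proposition~\ref{prop:image varphi=V}. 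Induction on the number of $H_\mathfrak{g}$-factors in a monomial then reduces every product of $F_\mathfrak{g}$- and $H_\mathfrak{g}$-evaluations to a $\mathbb{K}$-combination of single evaluations: even-$H$ products land in the $(0)$-eigenspace (as $F_\mathfrak{g}$-evaluations), odd-$H$ products in the $(1)$-eigenspace (as $H_\mathfrak{g}$-evaluations). Applying $\pi_1$ kills the former, so $\mathrm{Im}(\pi_1\circ\varphi)=\mathbb{K}\text{-span}\{H_\mathfrak{g}(\underline{t}):\underline{t}\in\bigoplus_{S^{\prime}}\mathbb{K}\}$. Since $H_\mathfrak{g}$ is a polynomial of total degree $\leq\tfrac{n}{2}$ in the $T_{i_1\cdots i_d}$ (being the Pfaffian of an $n\times n$ skew-symmetric matrix linear in them), a standard interpolation argument identifies this span with $\mathbb{K}\text{-span}\{d^{\prime}_w:w\in\mathcal{M}^{\prime}(\tfrac{n}{2})\}$. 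The main technical hurdle I anticipate is the clean invocation of the $SO_n$-FFT in characteristic $p>2$ and the precise check that its Pfaffian generators really do reduce, on $\mathfrak{C}_{\mathfrak{g}}^d$, to the evaluations of $H_\mathfrak{g}$ rather than to a strictly larger family.
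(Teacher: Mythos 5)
Your proposal is correct and follows essentially the same route as the paper: both rely on the $SO_n$ fundamental theorem (Lopatin, Corollary~2, with Procesi/Zubkov as background) to generate $\mathrm{Im}\,\varphi$ by the $F_{\mathfrak{g}}$-evaluations together with the Pfaffian coefficients $d^{\prime}_w$ (equivalently the $H_{\mathfrak{g}}$-evaluations), and on the identity $\det(A)\,\mathrm{Pf}(B)=\mathrm{Pf}(ABA)$ for symmetric $A$ and skew-symmetric $B$ to absorb the even part, so that the $(1)$-eigenspace of the image is spanned by $H_{\mathfrak{g}}$-evaluations, i.e.\ by $\{d^{\prime}_w\mid w\in\mathcal{M}^{\prime}(\frac{n}{2})\}$. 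The only cosmetic difference is bookkeeping: the paper organizes the reduction via the $\mathbb{Z}/2\mathbb{Z}$-grading of $\mathrm{Im}\,\varphi$ and identifies its $(0)$-part with $\varphi(\mathbb{K}[M_n(\mathbb{K})^d]^{G})$ directly, whereas you lift even products of Pfaffian generators to $O_n$-invariants, which amounts to the same point.
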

\begin{proof}
Note that $\mathrm{Im }\ \varphi=\mathrm{Im }\ \varphi\cap \mathbb{K}[\mathfrak{C}^d_{\mathfrak{g}}]^{G^{\prime}}_{(0)}\oplus \mathrm{Im }\ \varphi\cap \mathbb{K}[\mathfrak{C}^d_{\mathfrak{g}}]^{G^{\prime}}_{(1)}$, since $\varphi$ is $G/G^{\prime}$-equivariant. 
We can deduce from  Lopatin [\ref{lopatin}, Corollary 2] that    $\mathrm{ Im}\  \varphi $ is generated as a $\mathbb{K}$-algebra by $d ^{\prime}_{w} \ (w\in \mathcal{M} ^{\prime}(\frac{n}{2}))$ and $\mathrm{Im }\ \varphi\cap \mathbb{K}[\mathfrak{C}^d_{\mathfrak{g}}]^{G^{\prime}}_{(0)}$.  Note also  $\mathbb{K}[\mathfrak{C}^d_{\mathfrak{g}}]^{G^{\prime}}_{(i)}\cdot \mathbb{K}[\mathfrak{C}^d_{\mathfrak{g}}]^{G^{\prime}}_{(j)}\subset \mathbb{K}[\mathfrak{C}^d_{\mathfrak{g}}]^{G^{\prime}}_{(i+j)}$ for $i,j \in \mathbb{Z}/2\mathbb{Z}=\{0,1\}$. It follows that $\mathrm{ Im}\  \varphi\cap \mathbb{K}[\mathfrak{C}^d_{\mathfrak{g}}]^{G^{\prime}}_{(1)} $ is $\mathbb{K}$-linearly spanned by 
\[\{a\cdot d ^{\prime}_{w}| a\in \mathrm{Im }\ \varphi\cap \mathbb{K}[\mathfrak{C}^d_{\mathfrak{g}}]^{G^{\prime}}_{(0)}, \ w\in \mathcal{M} ^{\prime}(\frac{n}{2})\}.\]

By the proof of Proposition \ref{prop:image varphi=V}, $\mathrm{Im} \ \varphi\cap \mathbb{K}[\mathfrak{C}^d_{\mathfrak{g}}]^{G^{\prime}}_{(0)}=\varphi(\mathbb{K}[M_n(\mathbb{K})^d]^G)$ is $\mathbb{K}$-linearly spanned by the evaluations $F_{\mathfrak{g}}(\underline{t})$ $(\underline{t}\in \bigoplus\limits_{S}\mathbb{K})$. On the other hand, it is easy to see the $\mathbb{K}$-linear subspace spanned by the coefficients $\{d ^{\prime}_{w}|  w\in \mathcal{M} ^{\prime}(\frac{n}{2})\}$ coincides with the $\mathbb{K}$-linear subspace spanned by the evaluations $H_{\mathfrak{g}}(\underline{t})$ $(\underline{t}\in \bigoplus\limits_{S^{\prime}}\mathbb{K })$. It follows then that $\mathrm{ Im}\  \varphi\cap \mathbb{K}[\mathfrak{C}^d_{\mathfrak{g}}]^{G^{\prime}}_{(1)} $ is $\mathbb{K}$-linearly spanned by $\{F_{\mathfrak{g}}(\underline{t}_1)\cdot H_{\mathfrak{g}}(\underline{t}_2)| \underline{t}_1\in \bigoplus\limits_{S}\mathbb{K}, \ \underline{t}_2\in \bigoplus\limits_{S ^{\prime}}\mathbb{K} \}$.

As $\det A \cdot {\rm Pf}(B)={\rm Pf}(A B A)$ for any $n\times n$ symmetric matrix $A$ and skew symmetric matrix $B$, we can see for any $\underline{t}_1\in \bigoplus\limits_{S}\mathbb{K}$ and any  $ \underline{t}_2\in \bigoplus\limits_{S ^{\prime}}\mathbb{K}$, there exists $\underline{t}_3\in \bigoplus\limits_{S ^{\prime}}\mathbb{K}$, such that $F_{\mathfrak{g}}(\underline{t}_1)\cdot H_{\mathfrak{g}}(\underline{t}_2)=H_{\mathfrak{g}}(\underline{t}_3)$.
So $\mathrm{ Im } \ \pi_1\circ \varphi=\mathrm{ Im}\  \varphi\cap \mathbb{K}[\mathfrak{C}^d_{\mathfrak{g}}]^{G^{\prime}}_{(1)} $ is $\mathbb{K}$-linearly spanned by $\{H_{\mathfrak{g}}(\underline{t})|  \underline{t}\in \bigoplus\limits_{S ^{\prime}}\mathbb{K} \}$, and hence by the coefficients $\{d ^{\prime}_{w}| w\in \mathcal{M} ^{\prime}(\frac{n}{2})\}$.
 \end{proof}
\begin{theorem}\label{thm:main thm in the n even G=SO_n case}
If $\rm{char}\  \mathbb{K} =0$, then the restriction  homomorphism $\Phi:\mathbb{K}[\mathfrak{C}^d_{\mathfrak{g}}]^{G^{\prime}} \rightarrow \mathbb{K}[\mathfrak{t}^d]^{W^{\prime}}$ is an isomorphism of $\mathbb{K}$-algebras.
\end{theorem}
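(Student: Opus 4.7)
The plan is to reduce Theorem \ref{thm:main thm in the n even G=SO_n case} to the $G$-invariant case (Theorem \ref{main thm: char. 0}) plus a new statement about the sign-twisted piece, and then exploit the Pfaffian generators to identify that piece explicitly. Since $\mathrm{char}\,\mathbb{K}=0$, both $W/W'$ and $G/G'$ are finite groups of order prime to the characteristic, so averaging provides the eigenspace decompositions
\[
\mathbb{K}[\mathfrak{C}^d_{\mathfrak{g}}]^{G'}=\mathbb{K}[\mathfrak{C}^d_{\mathfrak{g}}]^{G'}_{(0)}\oplus \mathbb{K}[\mathfrak{C}^d_{\mathfrak{g}}]^{G'}_{(1)},\qquad \mathbb{K}[\mathfrak{t}^d]^{W'}=\mathbb{K}[\mathfrak{t}^d]^{W'}_{(0)}\oplus \mathbb{K}[\mathfrak{t}^d]^{W'}_{(1)},
\]
with the $(0)$-parts equal to the $G$- and $W$-invariants respectively. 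As $\Phi$ preserves these decompositions, it suffices to show that $\Phi$ induces an isomorphism on each eigen-component separately.

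For the $(0)$-component, the assertion is exactly Theorem \ref{main thm: char. 0} applied to $G=O_n(\mathbb{K})$, which gives the desired isomorphism $\mathbb{K}[\mathfrak{C}^d_{\mathfrak{g}}]^{G}\xrightarrow{\sim}\mathbb{K}[\mathfrak{t}^d]^{W}$. So all the work is concentrated on the $(1)$-component.

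For the $(1)$-component, the plan is as follows. In characteristic zero $G'$ is linearly reductive, so the restriction $\varphi:\mathbb{K}[M_n(\mathbb{K})^d]^{G'}\to \mathbb{K}[\mathfrak{C}^d_{\mathfrak{g}}]^{G'}$ is surjective; composing with the equivariant projection $\pi_1$ onto the sign eigenspace yields a surjection
\[
\pi_1\circ\varphi:\mathbb{K}[M_n(\mathbb{K})^d]^{G'}\twoheadrightarrow \mathbb{K}[\mathfrak{C}^d_{\mathfrak{g}}]^{G'}_{(1)}.
\]
By Proposition \ref{prop:Im=V_(1)}, the image of $\pi_1\circ\varphi$ is $\mathbb{K}$-linearly spanned by $\{d'_w\mid w\in\mathcal{M}'(\tfrac{n}{2})\}$, so these Pfaffian coefficients span $\mathbb{K}[\mathfrak{C}^d_{\mathfrak{g}}]^{G'}_{(1)}$. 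Because the Pfaffian is a polynomial expression in the matrix entries, the restriction homomorphism $M_n(\mathbb{K}[\mathfrak{C}^d_{\mathfrak{g}}])\to M_n(\mathbb{K}[\mathfrak{t}^d])$ sends $H_{\mathfrak{g}}$ to $H_{\mathfrak{t}}$, and comparing the expansions \eqref{equ:expansion H_t, H_g} gives $\Phi(d'_w)=c'_w$ for every $w\in\mathcal{M}'(\tfrac{n}{2})$. By Lemma \ref{lemma:basis of K[t]^W_(1)} the elements $\{c'_w\mid w\in\mathcal{M}'(\tfrac{n}{2})\}$ form a $\mathbb{K}$-linear basis of $\mathbb{K}[\mathfrak{t}^d]^{W'}_{(1)}$. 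Therefore $\Phi$ sends a $\mathbb{K}$-linear spanning set of $\mathbb{K}[\mathfrak{C}^d_{\mathfrak{g}}]^{G'}_{(1)}$ bijectively onto a $\mathbb{K}$-linear basis of $\mathbb{K}[\mathfrak{t}^d]^{W'}_{(1)}$, so the restriction of $\Phi$ to the $(1)$-component is a $\mathbb{K}$-linear isomorphism.

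Combining the $(0)$- and $(1)$-components establishes that $\Phi:\mathbb{K}[\mathfrak{C}^d_{\mathfrak{g}}]^{G'}\to \mathbb{K}[\mathfrak{t}^d]^{W'}$ is a bijective $\mathbb{K}$-algebra homomorphism. The only genuinely new ingredients beyond Theorem \ref{main thm: char. 0} are Proposition \ref{prop:Im=V_(1)} and Lemma \ref{lemma:basis of K[t]^W_(1)}; all the delicate Pfaffian/determinant identities and the degree bound $\deg H_{\mathfrak{g}}\le \tfrac{n}{2}$ are absorbed in those results, so no further technical obstruction arises in assembling the argument. The one point worth emphasizing is that linear reductivity of $G'$ is genuinely used to guarantee surjectivity of $\pi_1\circ\varphi$ onto $\mathbb{K}[\mathfrak{C}^d_{\mathfrak{g}}]^{G'}_{(1)}$; this is the reason the argument is restricted to characteristic zero.
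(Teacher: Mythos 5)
Your proposal is correct and follows essentially the same route as the paper: split via the $G/G'$- and $W/W'$-eigenspace decompositions, handle the $(0)$-part by Theorem \ref{main thm: char.  0}, and for the $(1)$-part use linear reductivity of $G'$ plus Proposition \ref{prop:Im=V_(1)} to see that the Pfaffian coefficients $d'_w$ span $\mathbb{K}[\mathfrak{C}^d_{\mathfrak{g}}]^{G'}_{(1)}$, then match them via $\Phi(H_{\mathfrak{g}})=H_{\mathfrak{t}}$ with the basis $\{c'_w\}$ from Lemma \ref{lemma:basis of K[t]^W_(1)}. No substantive difference from the paper's argument.
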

\begin{proof}
By Theorem \ref{main thm: char.  0}, $\Phi$ induces an isomorphism $\mathbb{K}[\mathfrak{C}^d_{\mathfrak{g}}]^{G^{\prime}}_{(0)}\xrightarrow{\ \ \sim\ \ } \mathbb{K}[\mathfrak{t}^d]^{W^{\prime}}_{(0)} $.  Since $\rm{char} \ \mathbb{K} =0$, the special orthogonal  group $G ^{\prime}$ is linearly reductive, and then $\varphi: \mathbb{K}[M_n(\mathbb{K})^d] ^{G^{\prime}}\rightarrow \mathbb{K}[\mathfrak{C}^d_{\mathfrak{g}}]^{G^{\prime}}$ is surjective as it is induced from the surjection $\mathbb{K}[M_n(\mathbb{K})^d]\rightarrow \mathbb{K}[\mathfrak{C}^d_{\mathfrak{g}}]$. So by Proposition \ref{prop:Im=V_(1)}, $\mathbb{K}[\mathfrak{C}^d_{\mathfrak{g}}]^{G^{\prime}}_{(1)}$ is $\mathbb{K}$-linearly spanned by  $\{d ^{\prime}_{w}| w\in \mathcal{M} ^{\prime}(\frac{n}{2})\}$. Note $\Phi(H_{\mathfrak{g}})=H_{\mathfrak{t}}$,  so $\Phi(d ^{\prime}_w)=c ^{\prime}_w$,  for any $ w\in \mathcal{M} ^{\prime}(\frac{n}{2})$. By Lemma \ref{lemma:basis of K[t]^W_(1)}, the set $\{c ^{\prime}_w | w\in \mathcal{M} ^{\prime}(\frac{n}{2})\}$ is a $\mathbb{K}$-linear basis of $\mathbb{K}[\mathfrak{t}^d]^{W^{\prime}}_{(1)}$. It follows that $\Phi$ maps the $\mathbb{K}$-linearly spanning set $\{d ^{\prime}_w | w\in \mathcal{M} ^{\prime}(\frac{n}{2})\}$ of $\mathbb{K}[\mathfrak{C}^d_{\mathfrak{g}}]^{G^{\prime}}_{(1)}$ bijectively to the $\mathbb{K}$-linear basis $\{c ^{\prime}_w | w\in \mathcal{M} ^{\prime}(\frac{n}{2})\}$ of $\mathbb{K}[\mathfrak{t}^d]^{W^{\prime}}_{(1)}$, and hence $\Phi$ induces a $\mathbb{K}$-linear isomorphism $\mathbb{K}[\mathfrak{C}^d_{\mathfrak{g}}]^{G^{\prime}}_{(1)}\xrightarrow{\ \ \sim \ \ } \mathbb{K}[\mathfrak{t}^d]^{W^{\prime}}_{(1)}$. Combining  with the isomorphism $\mathbb{K}[\mathfrak{C}^d_{\mathfrak{g}}]^{G^{\prime}}_{(0)}\xrightarrow{\ \ \sim \ \ } \mathbb{K}[\mathfrak{t}^d]^{W^{\prime}}_{(0)}$, we finish the proof.
\end{proof}

\begin{theorem}\label{thm:n even G=SO_n, char.  p case}
If $\rm{char}\  \mathbb{K} =p>2$, then the restriction homomorphism  
\[\Phi: \mathbb{K}[\mathfrak{C}^d_{\mathfrak{g},red}]^{G^{\prime}}\rightarrow \mathbb{K}[\mathfrak{t}^d]^{W^{\prime}}\]
 is an isomorphism of $\mathbb{K}$-algebras.
  
\end{theorem}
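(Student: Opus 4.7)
The plan is to combine the eigenspace decomposition used in Theorem~\ref{thm:main thm in the n even G=SO_n case} with the Mumford--Fogarty--Kirwan trick from the proof of Theorem~\ref{thm:char.  p case}. Because the nilpotent radical of $\mathbb{K}[\mathfrak{C}^d_{\mathfrak{g}}]$ is preserved by every ring automorphism, the $G/G^{\prime}\cong \mathbb{Z}/2\mathbb{Z}$ action descends to $\mathbb{K}[\mathfrak{C}^d_{\mathfrak{g},red}]^{G^{\prime}}$ and produces an eigenspace decomposition
$$\mathbb{K}[\mathfrak{C}^d_{\mathfrak{g},red}]^{G^{\prime}}=\mathbb{K}[\mathfrak{C}^d_{\mathfrak{g},red}]^{G^{\prime}}_{(0)}\oplus \mathbb{K}[\mathfrak{C}^d_{\mathfrak{g},red}]^{G^{\prime}}_{(1)},$$
with $\mathbb{K}[\mathfrak{C}^d_{\mathfrak{g},red}]^{G^{\prime}}_{(0)}=\mathbb{K}[\mathfrak{C}^d_{\mathfrak{g},red}]^G$. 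Since $\Phi$ is $G/G^{\prime}$-equivariant, it preserves this splitting and the analogous decomposition of $\mathbb{K}[\mathfrak{t}^d]^{W^{\prime}}$, so it suffices to show that $\Phi$ is bijective on each eigenspace.

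The $(0)$-eigenspace is handled directly by Theorem~\ref{thm:char.  p case}. For the $(1)$-eigenspace, I would consider the composition
$$\mathbb{K}[M_n(\mathbb{K})^d]^{G^{\prime}}\xrightarrow{\varphi}\mathbb{K}[\mathfrak{C}^d_{\mathfrak{g}}]^{G^{\prime}}\xrightarrow{\pi_1}\mathbb{K}[\mathfrak{C}^d_{\mathfrak{g}}]^{G^{\prime}}_{(1)}\xrightarrow{\pi}\mathbb{K}[\mathfrak{C}^d_{\mathfrak{g},red}]^{G^{\prime}}_{(1)},$$
where the final arrow is induced by the quotient $\mathbb{K}[\mathfrak{C}^d_{\mathfrak{g}}]\to \mathbb{K}[\mathfrak{C}^d_{\mathfrak{g},red}]$. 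By Proposition~\ref{prop:Im=V_(1)}, $\mathrm{Im}(\pi_1\circ \varphi)$ is $\mathbb{K}$-linearly spanned by $\{d^{\prime}_w\mid w\in \mathcal{M}^{\prime}(\tfrac{n}{2})\}$, so $\mathrm{Im}(\pi\circ \pi_1\circ \varphi)$ is spanned by their reductions modulo nilpotents. Since $\Phi(d^{\prime}_w)=c^{\prime}_w$ and $\{c^{\prime}_w\}$ is a $\mathbb{K}$-basis of $\mathbb{K}[\mathfrak{t}^d]^{W^{\prime}}_{(1)}$ by Lemma~\ref{lemma:basis of K[t]^W_(1)}, the restriction of $\Phi$ to $\mathrm{Im}(\pi\circ \pi_1\circ \varphi)$ is a $\mathbb{K}$-linear isomorphism onto $\mathbb{K}[\mathfrak{t}^d]^{W^{\prime}}_{(1)}$. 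In particular $\Phi$ is surjective on the $(1)$-eigenspace.

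For injectivity, given $f\in \mathbb{K}[\mathfrak{C}^d_{\mathfrak{g},red}]^{G^{\prime}}_{(1)}$ with $\Phi(f)=0$, I would invoke [GIT, Lemma~A.1.2] to produce $m\geq 1$ with $f^m\in \mathrm{Im}(\pi\circ \varphi)$. Because this image is $G/G^{\prime}$-stable and $f^m$ lies in the eigenspace of parity $i:=m\bmod 2$, in fact $f^m\in \mathrm{Im}(\pi\circ \pi_i\circ \varphi)$, where $\pi_0$ denotes the projection onto the $(0)$-eigenspace. On each of these two images $\Phi$ is already known to be injective: the case $i=1$ is the preceding paragraph, and the case $i=0$ reduces to Theorem~\ref{thm:char.  p case} after identifying $\mathrm{Im}(\pi\circ \pi_0\circ \varphi)$ with the image of $\mathbb{K}[M_n(\mathbb{K})^d]^G\to \mathbb{K}[\mathfrak{C}^d_{\mathfrak{g},red}]^G$ used there. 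Hence $\Phi(f^m)=0$ forces $f^m=0$, and then $f=0$ by reducedness.

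The main subtle point is the parity bookkeeping: MFK does not let us prescribe $m$, so the injectivity argument must work uniformly in $m\bmod 2$. This is clean because Theorem~\ref{thm:char.  p case} already disposes of the even case, leaving only the odd case, for which Proposition~\ref{prop:Im=V_(1)} together with Lemma~\ref{lemma:basis of K[t]^W_(1)} supply the needed injectivity. Beyond this, the argument is a formal combination of the results in hand.
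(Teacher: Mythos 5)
Your proposal is correct and follows essentially the same route as the paper: split into $G/G^{\prime}$-eigenspaces, settle the $(0)$-part by Theorem \ref{thm:char.  p case}, use Proposition \ref{prop:Im=V_(1)} and Lemma \ref{lemma:basis of K[t]^W_(1)} to get an isomorphism on the spanning image (hence surjectivity on the $(1)$-part), and then the Mumford--Fogarty--Kirwan power trick with the same parity case split for injectivity. The only cosmetic difference is that for even $m$ the paper simply invokes injectivity of $\Phi$ on all of $\mathbb{K}[\mathfrak{C}^d_{\mathfrak{g},red}]^{G}$ rather than identifying $\mathrm{Im}(\pi\circ\pi_0\circ\varphi)$ with the image used in Theorem \ref{thm:char.  p case}.
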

\begin{proof}
By Theorem \ref{thm:char.  p case}, $\Phi$ induces an isomorphism $\mathbb{K}[\mathfrak{C}^d_{\mathfrak{g},red}]^{G^{\prime}}_{(0)}\xrightarrow{\ \ \sim \ \ } \mathbb{K}[\mathfrak{t}^d]^{W^{\prime}}_{(0)}$. So it suffices to show $\mathbb{K}[\mathfrak{C}^d_{\mathfrak{g},red}]^{G^{\prime}}_{(1)}\xrightarrow{\ \ \Phi \ \ } \mathbb{K}[\mathfrak{t}^d]^{W^{\prime}}_{(1)}$ is an isomorphism.

 Let  $V$ be the image of the following composition of homomorphisms:
\begin{displaymath}
\mathbb{K}[M_n(\mathbb{K})^d]^{G ^{\prime}}\xrightarrow{\varphi}\mathbb{K}[\mathfrak{C}^d_{\mathfrak{g}}]^{G ^{\prime}} \xrightarrow{} \mathbb{K}[\mathfrak{C}^d_{\mathfrak{g},red}]^{G ^{\prime}} \rightarrow \mathbb{K}[\mathfrak{C}^d_{\mathfrak{g},red}]^{G ^{\prime}}_{(1)}. 
\end{displaymath}
By Proposition \ref{prop:Im=V_(1)},  $V$ is $\mathbb{K}$-linearly spanned by $\{d ^{\prime}_{w}| w\in \mathcal{M} ^{\prime}(\frac{n}{2})\}$. The same arguments as above show that the restriction $\Phi|_{V}: V\rightarrow \mathbb{K}[\mathfrak{t}^d]^{W ^{\prime}}$ is an isomorphism. In particular, $\Phi: \mathbb{K}[\mathfrak{C}^d_{\mathfrak{g},red}]^{G^{\prime}}_{(1)}\rightarrow \mathbb{K}[\mathfrak{t}^d]^{W^{\prime}}_{(1)}$ is surjective. 

Since $\mathbb{K}[M_n(\mathbb{K})^d]\xrightarrow{} \mathbb{K}[\mathfrak{C}^d_{\mathfrak{g},red}]$ is surjective, it follows from Mumford-Fogarty-Kirwan [\ref{GIT}, Lemma A.1.2] that  for any $f\in \mathbb{K}[\mathfrak{C}^d_{\mathfrak{g},red}]^{G ^{\prime}}$, a positive power of $f$ is in the image of $\mathbb{K}[M_n(\mathbb{K})^d]^{G ^{\prime}}$. So for any $f \in \mathbb{K}[\mathfrak{C}^d_{\mathfrak{g},red}]^{G ^{\prime}}_{(1)}$, there exists $m\geq 1$, such that $f^m\in \mathbb{K}[\mathfrak{C}^d_{\mathfrak{g},red}]^{G ^{\prime}}_{(0)}$ ($m$ even) or $f^m\in V$ ($m$ odd). If $\Phi(f)=0$, then $\Phi(f^m)=0$. Since either $\Phi: \mathbb{K}[\mathfrak{C}^d_{\mathfrak{g},red}]^{G^{\prime}}_{(0)}\xrightarrow{ } \mathbb{K}[\mathfrak{t}^d]^{W^{\prime}}_{(0)}$ or $\Phi|_{V}: V\rightarrow \mathbb{K}[\mathfrak{t}^d]^{W ^{\prime}}$ is injective, we obtain $f^m=0$, and then $f=0$ by the reducedness of $\mathbb{K}[\mathfrak{C}^d_{\mathfrak{g},red}]^{G^{\prime}}$. This shows $\Phi: \mathbb{K}[\mathfrak{C}^d_{\mathfrak{g},red}]^{G^{\prime}}_{(1)}\rightarrow \mathbb{K}[\mathfrak{t}^d]^{W^{\prime}}_{(1)}$ is injective, and we have finished the proof.
\end{proof}

\section{Proofs of some lemmas}\label{technical lemmas}
The section is the most technical part of the paper. The reader is advised to skip the section at the first reading. We keep the same notations as in Section \ref{sec:main theorems}, \ref{sec:The n even and G=SO_n case}, and let $m=\lfloor\frac{n}{2}\rfloor$. 

\subsection{Proofs of Lemma \ref{lemma:deg of sqrt F_t} and \ref{lemma:basis of K[t]^W_(1)}}\label{subsec:proof of lemma deg F_t}

Since for any two Cartan subalgebras $\mathfrak{t}_1$, $\mathfrak{t}_2$ of $\mathfrak{g}$, there exists an element $h\in G$, such that $Ad(h)\mathfrak{t}_1=\mathfrak{t}_2$ and since $Ad(h) F_{\mathfrak{g}}=F_{\mathfrak{g}}$, Lemma \ref{lemma:deg of sqrt F_t} and \ref{lemma:basis of K[t]^W_(1)} hold for $\mathfrak{t}_1$ if and only if they hold for $\mathfrak{t}_2$. As a result, we will choose a specific Cartan subalgebra $\mathfrak{t} $ for the proofs.

If $n $ is even and $G=Sp_n(\mathbb{K})$, let \[\mathfrak{t}=\{\mathrm{ diag }(x_1,\cdots, x_m, -x_1,\cdots, -x_m)\:|\: x_i\in \mathbb{K}, 1\leq i\leq m\}.\]

If $G=O_n(\mathbb{K})$ or $SO_n(\mathbb{K})$, let 
\[\mathfrak{t}=\{\textmd{SK }(x_1,\cdots, x_m) \:|\: x_i\in \mathbb{K}, 1\leq i\leq m\},\]
where $\textmd{SK }(x_1,\cdots, x_m)$ is the  $n\times n$ skew symmetric matrix:
\begin{equation}\label{equ:sk(x_1,...,x_m)}
\begin{pmatrix}
0 & \sqrt{-1}x_1 & & & \\ 
-\sqrt{-1}x_1 & 0 & & & \\ 
& & 0 & \sqrt{-1}x_2 &\\ 
& & -\sqrt{-1}x_2 & 0 & \\ 
& & & & \ddots 
\end{pmatrix}. 
\end{equation}
In other words, for $1\leq i, j\leq n$, the $(i,j)$-entry of $\textmd{SK }(x_1,\cdots, x_m)$ is
\begin{displaymath}
\begin{cases}
\sqrt{-1}x_p & \textmd{ if } (i,j)=(2p-1, 2p) \textmd{ and } 1\leq p\leq m, \\ 
-\sqrt{-1}x_p & \textmd{ if } (i,j)=(2p, 2p-1) \textmd{ and } 1\leq p\leq m, \\
0 & \textmd{ otherwise}. 
\end{cases}
\end{displaymath}

\begin{proof}[Proof of Lemma \ref{lemma:deg of sqrt F_t}]
Let $V=\mathbb{K}^m$, we identify $V$ and $\mathfrak{t}$ by:
\begin{displaymath}
  \begin{cases}
  (x_1,\cdots, x_m)\mapsto \mathrm{ diag }(x_1,\cdots, x_m, -x_1,\cdots, -x_m), \ \textmd{ if } n \textmd{ even}, G= Sp_n(\mathbb{K});\\ 
  (x_1,\cdots, x_m)\mapsto \mathrm{ SK }(x_1,\cdots, x_m), \ \textmd{ if }  G= O_n(\mathbb{K}) \textmd{ or } SO_n(\mathbb{K}).
  \end{cases}
  \end{displaymath}  
 Under this identification, the Weyl group $W$ acts on $V$ by permuting the coordinates $x_1,\cdots, x_m$ and sign changing $x_i\mapsto -x_i$.
Let $x_{ij}$ be the linear function on $V^d$ whose value at a point $(v_1,\cdots, v_d)$ is the $i$-th component of $v_j$. A direct computation shows that under the identification of $V$ and $\mathfrak{t}$, we have  $F_{\mathfrak{t}}=N_{\mathfrak{t}}^2$, where 
\begin{displaymath}
N_{\mathfrak{t}}=\prod_{k=1}^m(1+\sum\limits_{(i_1,\cdots, i_d)\in S} x_{k1}^{i_1}x_{k2}^{i_2}\cdots x_{kd}^{i_d}T_{i_1\cdots i_d}).
\end{displaymath}
The constant coefficient  of $N_{\mathfrak{t}}$ is $1$, so by the uniqueness of square root (Lemma \ref{lemma:f^2 determines f}), $N_{\mathfrak{t}}=\sqrt{F_{\mathfrak{t}}}$, and hence $\deg \sqrt{F_{\mathfrak{t}}}=\deg N_{\mathfrak{t}}\leq m=\lfloor\frac{n}{2}\rfloor$.

Next we will show that the coefficients of $N_{\mathfrak{t}}$ form a $\mathbb{K}$-linear basis of $\mathbb{K}[V^d]^W$. If $\rm{char} \  \mathbb{K} =0$, this is a direct consequence of Hunziker [\ref{Hunziker}, Lemma 2.2]. In the following we shall adapt Hunziker's proof slightly so that it works for all the $\rm{char} \ \mathbb{K} \neq 2$ cases. 

We denote by $\Lambda$ the following set of nonzero  $m\times d$ matrices:
\begin{displaymath}
\Lambda:=\{\lambda=\begin{pmatrix}
\lambda_{11} & \cdots & \lambda_{1d}\\ 
\vdots & \ddots & \vdots \\ 
\lambda_{m1} &\cdots &\lambda_{md}
\end{pmatrix}| \lambda_{ij}\in \mathbb{Z}_{\geq 0},\  \lambda \neq 0\}.
\end{displaymath}
Let $\Lambda_{even}:=\{\lambda\in \Lambda| \sum_{j=1}^d \lambda_{ij} \textmd{ is even, } \text{for} \ 1\leq i\leq m\}$.  

The symmetric group $S_m$ acts on $\Lambda$ by permuting the rows. Let $\Lambda^{+}_{even}\subset \Lambda_{even} $ be the subset of all $\lambda\in \Lambda_{even}$ such that $\lambda_1\geq \cdots \geq \lambda_m$ with respect to the lexicographic order on the rows, where $\lambda_i$ is the $i$-th row of $\lambda$. Then for $\lambda\in \Lambda_{even}$, the orbit $S_m \cdot \lambda \subset \Lambda_{even}$ contains a unique element in $\Lambda^{+}_{even}$.

To each $\lambda\in \Lambda$ corresponds to the monomials $x^{\lambda}:=\prod_{k=1}^m x_{k1}^{\lambda_{k1}}x_{k2}^{\lambda_{k2}}\cdots x_{kd}^{\lambda_{kd}}  $ and  $T_{\lambda}:=\prod_{k=1}^m T_{\lambda_{k1},\lambda_{k2},\cdots, \lambda_{kd}}$. Here by convention $T_{0,\cdots, 0}=1$.  For $\lambda\in \Lambda^{+}_{even}$ we put 
\begin{displaymath}
a_{\lambda}:=\sum\limits_{\mu\in S_m \cdot \lambda} x^{\mu}.
\end{displaymath}
Using these notations, we can rewrite $N_{\mathfrak{t}}$ as 
\begin{displaymath}
 N_{\mathfrak{t}}=1+\sum\limits_{\lambda\in \Lambda^{+}_{even}}a_{\lambda} T_{\lambda}. 
 \end{displaymath} 
It is direct to see $a_{\lambda}\in \mathbb{K}[V^d]^{W}$, and $a_{\lambda}$ \ ($\lambda\in \Lambda^{+}_{even}$) are linearly independent. Suppose $f\in \mathbb{K}[V^d]^W$ is nonzero and write $f=b_0+\sum_{\lambda\in \Lambda} b_{\lambda} x^{\lambda}$, with $b_0,\ b_{\lambda}\in \mathbb{K}$. From $w f=f$ for all sign changes $w$ we deduce $\lambda\in \Lambda_{even}$ if $b_{\lambda}\neq 0$. Then $\sum_{\lambda\in \Lambda_{even}} b_{\lambda} x^{w \lambda}=\sum_{\lambda\in \Lambda_{even}} b_{\lambda} x^{\lambda}$, \text{for} $w\in S_m$. This implies $b_{w \lambda}=b_{\lambda}$, for any $w \in S_m$. So $f$ is a linear combination of $1$ and  $a_{\lambda}$, \ $\lambda\in \Lambda^{+}_{even}$. Finally $\{1\}$ $\cup$ $\{a_{\lambda}| \lambda\in \Lambda^{+}_{even}\}$, i.e., the coefficients of $N_{\mathfrak{t}}=\sqrt{F_{\mathfrak{t}}}$, are a $\mathbb{K}$-linear basis of $\mathbb{K}[V^d]^W$. 
\end{proof}

\begin{proof}[Proof of Lemma \ref{lemma:basis of K[t]^W_(1)}]

Keep notations as above. Define 
\[\Lambda_{odd}:=\{\lambda\in \Lambda| \sum_{j=1}^d \lambda_{ij} \textmd{ is odd }, \text{for}  \ 1\leq i\leq m\},\] and let $\Lambda^{+}_{odd}\subset \Lambda_{odd} $ be the subset of all $\lambda\in \Lambda_{odd}$ such that $\lambda_1\geq \cdots \geq \lambda_m$ with respect to the lexicographic order on the rows, where $\lambda_i$ is the $i$-th row of $\lambda$. Then for $\lambda\in \Lambda_{odd}$, the orbit $S_m \cdot \lambda \subset \Lambda_{odd}$ contains a unique element in $\Lambda^{+}_{odd}$. For $\lambda\in \Lambda^{+}_{odd}$ we put 
\begin{displaymath}
a_{\lambda} =\sum\limits_{\mu\in S_m \cdot \lambda} x^{\mu}.
\end{displaymath}
Then under the identification of $V$ and $\mathfrak{t}$, a direct computation shows that   
\begin{displaymath}
 H_{\mathfrak{t}}=(\sqrt{-1})^{m}\sum\limits_{\lambda\in \Lambda^{+}_{odd}}a_{\lambda} T_{\lambda}. 
 \end{displaymath} 
 The Weyl group $W$  of $G$ acts on $V$ by permuting the coordinates $x_1,\cdots, x_d$ and sign changing $\tau_i: x_i\mapsto -x_i$. The Weyl group $W ^{\prime}$ of $G ^{\prime}$ is then the index two subgroup of $W$, generated by the permutations and $\tau_i \circ \tau_j$, \ $1\leq i, j\leq d$. It follows that 
 \begin{equation*}
 a_{\lambda}\in \mathbb{K}[V^d]^{W ^{\prime}}_{(1)}, \ \text{for all } \lambda\in \Lambda^{+}_{odd},
 \end{equation*}
where $\mathbb{K}[V^d]^{W ^{\prime}}_{(1)}$  is the $(-1)$-eigen subspace of $\mathbb{K}[V^d]^{W ^{\prime}}$ under the action of $W/W ^{\prime}$. 
 
 Moreover, it is easy to see $a_{\lambda}$ $(\lambda \in \Lambda^{+}_{odd} )$ are linearly independent. Similar to the proof of Lemma \ref{lemma:deg of sqrt F_t}, we can see $\mathbb{K}[V^d]^{W ^{\prime}}_{(1)}$ is linearly spanned by $a_{\lambda}$ $(\lambda \in \Lambda^{+}_{odd} )$. So the coefficients of $H_{\mathfrak{t}}$ form a basis of $\mathbb{K}[V^d]^{W ^{\prime}}_{(1)}$. Under the identification $V=\mathfrak{t}$, this means  $\{c ^{\prime}_w | w\in \mathcal{M} ^{\prime}(\frac{n}{2})\}$ is a $\mathbb{K}$-linear basis of $\mathbb{K}[\mathfrak{t}^d]^{W^{\prime}}_{(1)}$, as asserted.
 
\end{proof}

\subsection{Proof of Lemma \ref{lemma:deg of sqrt F_g}}\label{subsec:construction of N}

If  $n$ is even and $G=Sp_n(\mathbb{K})$, the following  matrix 
\[A:=J+J\sum_{(i_1,\cdots,i_d)\in S}X(1)^{i_1}\cdots X(d)^{i_d}T_{i_1\cdots i_d}\] 
is  skew-symmetric. According to Chen-Ng\^{o} [\ref{Chen-Ngo-Symplectic}], let 
\[N:={\rm Pf}(A) {\rm Pf}(J)^{-1}\in \mathbb{K}[\mathfrak{C}^d_{\mathfrak{g}}] [[T_{i_1\cdots i_d}|(i_1,\cdots, i_d)\in S]]. \]
Then the constant coefficient of $N$ is $1$ and 
\[N^2=\det (I_n+\sum_{(i_1,\cdots,i_d)\in S}X(1)^{i_1}\cdots X(d)^{i_d}T_{i_1\cdots i_d})=F_{\mathfrak{g}}. \]
By Lemma \ref{lemma:f^2 determines f}, we get $N=\sqrt{F_{\mathfrak{g}}}$. Since $\deg N\leq \frac{n}{2}$ by its explicit  construction, we see $\deg  \sqrt{F_{\mathfrak{g}}}\leq \frac{n}{2}$.  This gives a proof of Lemma \ref{lemma:deg of sqrt F_g} in the $G=Sp_n(\mathbb{K})$ case. 

In the remaining of this subsection, we assume  $G $ is one of the following orthogonal  groups.
\begin{displaymath}
G=
\begin{cases}
O_n(\mathbb{K}), \\  
SO_n(\mathbb{K}), & n \textmd{   odd}.
\end{cases}
\end{displaymath}

We will need the following lemma, see e.g. \cite{KSW}, \cite{Silvester}.

\begin{lemma}\label{lemma:determinant of a block matrix}
Let $R$ be a  ring, and $X_{ij}\in M_l(R)$, for $1\leq i,j\leq k$. Let $\tilde{X}\in M_{kl}(R)$ be the following block matrix
\begin{displaymath}
\begin{pmatrix}
X_{11} & X_{12} &\cdots & X_{1k}\\ 
X_{21} & X_{22} &\cdots & X_{2k}\\ 
\vdots & \vdots & \ddots &\vdots \\ 
X_{k1} & X_{k2} &\cdots  &X_{kk}
\end{pmatrix}. 
\end{displaymath}
If $X_{ij}$ commute pairwise, then 
\begin{equation*}
\det \tilde{X}=\det (\sum\limits_{\sigma \in S_k}(\mathrm{sgn }~\sigma) X_{1\sigma(1)}X_{2\sigma(2)}\cdots X_{k\sigma(k)}). 
\end{equation*}
\end{lemma}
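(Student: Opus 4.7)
The plan is to prove the lemma by induction on $k$, combining the Schur complement formula with a localization trick to reduce to the case of an invertible corner block. The base case $k=1$ is trivial, since both sides reduce to $\det X_{11}$.

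For the inductive step, I would first reduce to the case where $X_{kk}$ is invertible in $M_l(R)$. Introduce an indeterminate $t$ and replace $X_{kk}$ by $X_{kk}+tI_l$ over $R[t]$; pairwise commutativity is preserved, and both sides of the desired identity become polynomials in $t$ with coefficients in $R$. Since $\det(X_{kk}+tI_l)$ is a monic polynomial of degree $l$ in $t$, it is a unit in the Laurent series ring $R((t^{-1}))$. Through the embedding $R[t]\hookrightarrow R((t^{-1}))$, the substituted identity holds in $R[t]$ if and only if it holds in $R((t^{-1}))$; specializing $t=0$ then recovers the original assertion. Hence without loss of generality we may assume $X_{kk}$ itself is invertible in $M_l(R)$.

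Assuming $X_{kk}$ invertible, I apply the block Schur complement formula. Writing
$$\tilde X=\begin{pmatrix}P & Q\\ U & X_{kk}\end{pmatrix}$$
with $P=(X_{ij})_{i,j<k}$ a $(k-1)l\times(k-1)l$ block, $Q=(X_{ik})_{i<k}$, and $U=(X_{kj})_{j<k}$, one has $\det\tilde X=\det X_{kk}\cdot\det(P-QX_{kk}^{-1}U)$. The Schur complement $P-QX_{kk}^{-1}U$ is itself a $(k-1)\times(k-1)$ block matrix whose $(i,j)$-block is $X_{ij}-X_{ik}X_{kk}^{-1}X_{kj}$. All such blocks lie in the commutative subring of $M_l(R)$ generated by the $X_{ij}$ and $X_{kk}^{-1}$; they therefore commute pairwise, so the inductive hypothesis expresses $\det(P-QX_{kk}^{-1}U)$ as the determinant of a single $l\times l$ matrix.

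To finish, I combine the inductive formula with multiplicativity $\det(MN)=\det(M)\det(N)$ and the identity
$$X_{kk}\sum_{\tau\in S_{k-1}}\mathrm{sgn}(\tau)\prod_{i=1}^{k-1}\bigl(X_{i\tau(i)}-X_{ik}X_{kk}^{-1}X_{k\tau(i)}\bigr)=\sum_{\sigma\in S_k}\mathrm{sgn}(\sigma)\prod_{i=1}^{k}X_{i\sigma(i)},$$
which is just the scalar Schur complement formula applied to the $k\times k$ matrix $(X_{ij})$ viewed over the commutative subring it generates. The main technical point will be the reduction to invertibility; once this is in place, the remaining block algebra is routine.
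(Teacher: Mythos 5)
The paper offers no proof of this lemma at all: it is quoted with a pointer to the references of Kovacs--Silver--Williams and Silvester, so there is nothing internal to compare against. Your argument is correct and is essentially the standard proof found in those cited sources: induction on $k$, reduction to the case of an invertible corner block by passing from $X_{kk}$ to $X_{kk}+tI_l$ and embedding $R[t]$ into $R((t^{-1}))$ (where the monic polynomial $\det(X_{kk}+tI_l)$ is a unit, hence $X_{kk}+tI_l$ is invertible by the adjugate formula), then the block Schur complement factorization together with the observation that $X_{kk}^{-1}$ commutes with every block, so the Schur complement blocks $X_{ij}-X_{ik}X_{kk}^{-1}X_{kj}$ again commute pairwise and the inductive hypothesis applies over $R((t^{-1}))$. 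The final identity you invoke is indeed just the scalar Schur complement formula for the $k\times k$ matrix $(X_{ij})$ over the commutative subring of $M_l(R((t^{-1})))$ generated by the blocks and $X_{kk}^{-1}$, and specializing $t=0$ recovers the statement over $R$. All steps check out; this is a complete and self-contained proof of the lemma the paper only cites.
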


Suppose $d\geq 1$ and consider the following $(2d+3)\times (2d+3)$ skew symmetric matrix $T$ over the polynomial ring $\mathbb{K}[t_{ij} | 1\leq i<j\leq 2d+3]$:
\begin{displaymath}
T(i,j)=
\begin{cases}
t_{ij}, \ 1\leq i<j\leq 2d+3;\\ 
-t_{ji}, \ 1\leq j<i\leq 2d+3;\\ 
0, \ 1\leq i=j\leq 2d+3.
\end{cases}
\end{displaymath}
For any $k\geq 1$ and any $1\leq i_1,\cdots, i_k\leq 2d+3$, let $T(\hat{i}_1,\cdots, \hat{i}_k)$ be the matrix obtained from $T$ by deleting the $i_j$-th row and the $i_j$-th column  for all $1\leq j\leq k$, and let $h_{i_1,\cdots, i_k}=\mathrm{Pf } \ T(\hat{i}_1,\cdots, \hat{i}_k)$ be the corresponding Pfaffian. Define the $2d \times 2d$ matrix $A$ over $\mathbb{K}[t_{ij} | 1\leq i<j\leq 2d+3]$ by $A(i,j):=h_{i,j,2d+2}^2+h_{i,2d+1,2d+2}^2+h_{j,2d+1,2d+2}^2$, for $1\leq i,j\leq 2d$.
\begin{lemma}\label{lemma:pfaffian solution}
The following system of equations for the variables $t_{ij}$ ($1\leq i<j\leq 2d+3$) has a solution in $\mathbb{K}$:
\begin{displaymath}
 \begin{cases}
 h_{2d+2}=0;\\ 
 h_{2d+3} \det A\neq 0.
 \end{cases}
 \end{displaymath} 
\end{lemma}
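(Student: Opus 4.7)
The plan is to reduce the existence of a solution to a non-divisibility statement in the polynomial ring $R := \mathbb{K}[t_{ij}\,:\,1\le i<j\le 2d+3]$, and then verify the required non-divisibilities either by support comparison or by exhibiting an explicit test point.

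First, observe that $h_{2d+2}=\mathrm{Pf}(T(\hat{2d+2}))$ is the Pfaffian of a generic $(2d+2)\times(2d+2)$ skew-symmetric matrix, which is classically known to be an irreducible polynomial; hence $(h_{2d+2})$ is a prime ideal of $R$ and $V(h_{2d+2})\subset\mathbb{A}^{\binom{2d+3}{2}}_{\mathbb{K}}$ is an irreducible hypersurface. Since $\mathbb{K}$ is algebraically closed, the lemma follows once we verify that neither $h_{2d+3}$ nor $\det A$ lies in the ideal $(h_{2d+2})$. For $h_{2d+3}$ this is immediate by a support comparison: the variable $t_{2d+1,2d+3}$ appears nontrivially in $h_{2d+2}$ (neither of its indices equals $2d+2$) but does not appear in $h_{2d+3}$ (which avoids the index $2d+3$ entirely), so any identity of the form $h_{2d+3}=g\cdot h_{2d+2}$ is ruled out at the level of variable support.

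For $\det A$, my plan is to exhibit an explicit point $T\in V(h_{2d+2})$ where $\det A(T)\ne 0$. I would take $T$ of rank exactly $2d+2$ whose one-dimensional kernel is prescribed to be $\mathbb{K}\cdot v$ with $v=\sum_{i\ne 2d+2}e_i$. By the Pfaffian analogue of Cramer's rule, the kernel of a maximal-rank odd-dimensional skew matrix is spanned by the vector $((-1)^{i-1}h_i)_{i=1}^{2d+3}$, so this prescription automatically yields $h_{2d+2}(T)=0$ and $h_{2d+3}(T)\ne 0$. Such matrices $T$ form an affine space parametrized by the free entries $\{t_{ij}\,:\,1\le i<j\le 2d+2\}$, with the $t_{i,2d+3}$ determined by the single linear constraint $Tv=0$. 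In the base case $d=1$, the choice $t_{ij}=1$ for all $1\le i<j\le 4$ forces $(t_{15},t_{25},t_{35},t_{45})=(-2,0,2,3)$, and a direct computation yields $h_{4}=0$, $h_{5}=1$, and $\det A=-64\ne 0$, confirming the lemma in the simplest case.

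The main obstacle will be establishing the analogous non-vanishing of $\det A$ for arbitrary $d$. The structural approach I have in mind decomposes the symmetric $2d\times 2d$ matrix $A$ as $A=B+u\mathbf{1}^{t}+\mathbf{1}u^{t}$, where $B_{ij}=h_{i,j,2d+2}^{2}$ and $u_{i}=h_{i,2d+1,2d+2}^{2}$; the matrix-determinant lemma then expresses $\det A$ in terms of $\det B$ modified by a rank-two correction, so that non-vanishing of $\det A$ at a test point reduces to non-vanishing of $\det B$ together with a non-triviality condition on the correction. Since the entries of $B$ are squares of Pfaffians of $(2d)\times(2d)$ submatrices of $T$, this last step should be tractable either by an induction on $d$ anchored in the $d=1$ verification above, or by specializing to a matrix whose higher Pfaffians satisfy a classical non-vanishing identity.
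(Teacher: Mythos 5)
Your opening reduction is sound and essentially the same as the paper's: since the generic Pfaffian $h_{2d+2}$ is irreducible, $(h_{2d+2})$ is prime and radical, so it suffices to show $h_{2d+3}\notin (h_{2d+2})$ (your variable-support argument with $t_{2d+1,2d+3}$ works, and the paper instead quotes coprimality of the two Pfaffians) and $\det A\notin (h_{2d+2})$. Your $d=1$ witness is also correct: with $t_{ij}=1$ for $i<j\le 4$ and the kernel condition one indeed gets $(t_{15},t_{25},t_{35},t_{45})=(-2,0,2,3)$, $h_4=0$, $h_5=1$, and $\det A=-(t_{35}^2+(t_{25}+t_{15})^2)(t_{35}^2+(t_{25}-t_{15})^2)=-64$.

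However, there is a genuine gap: the statement for arbitrary $d$ --- which is the entire content of the lemma --- is not proved, and the sketched plan does not clearly close it. Writing $A=B+u\mathbf{1}^{t}+\mathbf{1}u^{t}$ with $B_{ij}=h_{i,j,2d+2}^2$ and $u_i=h_{i,2d+1,2d+2}^2$ is correct, and the matrix determinant lemma reduces $\det A\neq 0$ to $\det B\neq 0$ plus a nondegeneracy of the rank-two correction, but you give no argument for either at any explicit point (or generically on $V(h_{2d+2})$, or on your kernel-prescribed linear family). The entries of $B$ are squares of $2d\times 2d$ sub-Pfaffians of a single matrix, constrained by Pl\"ucker-type relations, and $B$ is \emph{not} the matrix $A'$ of a smaller instance, so the proposed ``induction anchored in $d=1$'' has no mechanism for descending from $d$ to $d-1$; likewise ``specializing to a matrix whose higher Pfaffians satisfy a classical non-vanishing identity'' is not identified. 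By contrast, the paper's proof supplies exactly such a mechanism: assuming $\det A=g\,h_{2d+2}$, it compares the coefficients of $t_{12}^{4d-4}$ on both sides; the top coefficient of $\det A$ factors as an explicit $2\times 2$ determinant, namely $-(h_{1,2d+1,2d+2}^2-h_{2,2d+1,2d+2}^2)^2$ after simplification, times $\det A'$ where $A'$ is precisely the matrix of the lemma for the smaller skew matrix $T(\hat 1,\hat 2)$, and then the induction hypothesis, irreducibility of $h_{1,2,2d+2}$, and a variable-support argument with $t_{3,2d+1}$ yield the contradiction. Without an analogous reduction (or a concrete family of witness points with a verified non-vanishing of $\det A$ for all $d$), your argument establishes the lemma only for $d=1$.
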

\begin{proof}
It suffices to show that in the polynomial ring $\mathbb{K}[t_{ij}|1\leq i<j\leq 2d+3]$, the polynomial $h_{2d+3} \det A$ is not contained in the ideal $\sqrt{(h_{2d+2})}$.   
Since the Pfaffians $h_{2d+2}$ and $h_{2d+3}$ are coprime irreducible polynomials (cf. Goodman-Wallach \cite[Lemma B.2.10]{Goodman-Wallach}), we only need to prove $\det A\notin (h_{2d+2})$.  We will proceed by induction on $d$.

 If $d=1$, by direct computations,  we have 
\[h_{2d+2}=h_4=t_{12} t_{35}-t_{13}t_{25}+t_{15}t_{23},\]
and 
\begin{eqnarray*}
\det A&=&4h_{1,3,4}^2h_{2,3,4}^2-(h_{1,2,4}^2+h_{1,3,4}^2+h_{2,3,4}^2)^2\\
&=&-(t_{35}^2+(t_{25}+t_{15})^2)(t_{35}^2+(t_{25}-t_{15})^2).
 \end{eqnarray*}
Since $h_4$ is irreducible, we can directly verify that $\det A\notin (h_4)$. 

Suppose $d\geq 2$ and the statement holds for $d-1$. Suppose to the contrary that $\det A\in (h_{2d+2})$, so there exists $g\in \mathbb{K}[t_{ij}|1\leq i<j\leq 2d+3]$ such that 
\begin{equation}\label{containment}
\det A=g \  h_{2d+2}.
\end{equation}
On the one hand the degree of $\det A$ with respect to the variable 
$t_{12}$ is at most $4d-4$, and the coefficient of $t_{12}^{4d-4}$ is given by
\begin{equation*}
\resizebox{\textwidth}{!}{
$\det \begin{pmatrix}
2h_{1,2d+1,2d+2}^2 & h_{1,2,2d+2}^2+h_{1,2d+1,2d+2}^2+h_{2,2d+1,2d+2}^2\\ 
h_{1,2,2d+2}^2+h_{1,2d+1,2d+2}^2+h_{2,2d+1,2d+2}^2 &2h_{2,2d+1,2d+2}^2 
\end{pmatrix}\cdot \det A^{\prime},$
}
\end{equation*}
where  $A^{\prime}$ is the $(2d-2)\times (2d-2)$ matrix over $\mathbb{K}[t_{ij}| 3\leq i<j\leq 2d+3]$ with 
\[A^{\prime}(i, j)=h_{1,2,i+2,j+2,2d+2}^2+h_{1,2,i+2,2d+1,2d+2}^2+h_{1,2,j+2,2d+1,2d+2}^2.\]
On the other hand, the degree of $h_{2d+2}$ with respect to $t_{12}$ is $1$, and the coefficient of $t_{12}$ is $h_{1,2,2d+2}$. By comparing the  coefficient of $t_{12}^{4d-4}$, we obtain from  (\ref{containment}) the following equality:
\begin{eqnarray*}
& \resizebox{\textwidth}{!}{
$\det \begin{pmatrix}
2h_{1,2d+1,2d+2}^2 & h_{1,2,2d+2}^2+h_{1,2d+1,2d+2}^2+h_{2,2d+1,2d+2}^2\\ 
h_{1,2,2d+2}^2+h_{1,2d+1,2d+2}^2+h_{2,2d+1,2d+2}^2 &2h_{2,2d+1,2d+2}^2 
\end{pmatrix}\cdot \det A^{\prime}$
}\\
&=g_1 \ h_{1,2,2d+2},
\end{eqnarray*}
where $g_1\in \mathbb{K}[t_{ij}|1\leq i<j\leq 2d+3]$. A further simplification yields that 
\[(h_{1,2d+1,2d+2}^2-h_{2,2d+1,2d+2}^2)^2 \ \det A^{\prime}=g_2 \ h_{1,2,2d+2},\] for some $g_2\in \mathbb{K}[t_{ij}|1\leq i<j\leq 2d+3]$. Applying the induction hypothesis to the skew symmetric matrix $T(\hat{1},\hat{2})$, we obtain  
\[\det A^{\prime}\notin (h_{1,2,2d+2}).\]
Then $(h_{1,2d+1,2d+2}^2-h_{2,2d+1,2d+2}^2)^2\in (h_{1,2,2d+2}) $, since $h_{1,2,2d+2}$ is irreducible. Note the variable $t_{3,2d+1}$ is absent from  $(h_{1,2d+1,2d+2}^2-h_{2,2d+1,2d+2}^2)^2$, and the degree of $h_{1,2,2d+2}$ with respect to $t_{3,2d+1}$ is $1$. This obviously contradicts the relation $(h_{1,2d+1,2d+2}^2-h_{2,2d+1,2d+2}^2)^2\in (h_{1,2,2d+2}) $. So $\det A\notin (h_{2d+2})$, as desired. 
\end{proof}

 We consider the ring $\mathbb{K}[\mathfrak{C}^d_{\mathfrak{g}}][T_0][[ T_{i_1 \cdots i_d} | (i_1,\cdots, i_d)\in S]]$, by adding a formal variable $T_0 $.
\begin{lemma}\label{lemma:existence of tildeN}
There exists $\tilde{N}\in \mathbb{K}[\mathfrak{C}^d_{\mathfrak{g}}][T_0][[ T_{i_1 \cdots i_d} | (i_1,\cdots, i_d)\in S]]$
satisfying:
\begin{equation}\notag
\tilde{N}^2=\det (T_0^2 I_n+\sum\limits_{(i_1,\cdots, i_d)\in S} X(1)^{i_1}X(2)^{i_2}\cdots X(d)^{i_d}T_{i_1\cdots i_d}).
\end{equation} 
\end{lemma}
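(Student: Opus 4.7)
The plan rests on the classical identity
\[\det(T_0^2I_n+B^2) \;=\; \det(T_0I_n+\sqrt{-1}\,B)^2,\]
valid for any skew-symmetric matrix $B$ over $\mathbb{K}$: the factorisation $(T_0I_n+\sqrt{-1}B)(T_0I_n-\sqrt{-1}B) = T_0^2I_n+B^2$ expresses $T_0^2I_n+B^2$ as a product whose two factors are transposes of each other (since $B^t=-B$), and hence have equal determinant.

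Write $X^{\vec i}:=X(1)^{i_1}\cdots X(d)^{i_d}$ for brevity. First I would note that $X^{\vec i}$ is skew-symmetric for $\vec i\in S'$ and symmetric for $\vec i\in S$, because the $X(k)$ commute and each is skew-symmetric; in particular $M:=\sum_{\vec i\in S}X^{\vec i}T_{\vec i}$ is symmetric. Introducing auxiliary formal variables $U_{\vec j}$ indexed by $\vec j\in S'$, I would form the skew-symmetric matrix $B_U:=\sum_{\vec j\in S'}X^{\vec j}U_{\vec j}$ and compute
\[B_U^2 \;=\; \sum_{\vec i\in S}X^{\vec i}\,Q_{\vec i}(U),\qquad Q_{\vec i}(U):=\sum_{\substack{\vec j+\vec k=\vec i\\ \vec j,\vec k\in S'}}U_{\vec j}U_{\vec k}.\]
Specialising $T_{\vec i}\mapsto Q_{\vec i}(U)$ turns $M$ into $B_U^2$, and the displayed identity yields
\[\det(T_0^2I_n+M)\bigm|_{T_{\vec i}=Q_{\vec i}(U)}\;=\; \det(T_0I_n+\sqrt{-1}\,B_U)^2,\]
so the desired square root exists explicitly in the ``$U$-world''.

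To descend from the $U$-variables to the free $T_{\vec i}$'s, I would argue by generic simultaneous block-diagonalisation. On a Zariski-dense open subset, the commuting skew-symmetric tuple $(X(1),\ldots,X(d))$ can be brought by an element of $O_n(\mathbb{K})$ into block-diagonal form consisting of $2\times 2$ skew-symmetric blocks together with (if $n$ is odd) a single $1\times 1$ zero block. In this form $T_0^2I_n+M$ is block-diagonal with blocks $(T_0^2+\mu^{(\ell)})I_2$, so
\[\det(T_0^2I_n+M) \;=\; \Bigl(T_0^{n\bmod 2}\prod_{\ell=1}^{\lfloor n/2\rfloor}(T_0^2+\mu^{(\ell)})\Bigr)^2\]
is a manifest square, and the product $\prod_\ell(T_0^2+\mu^{(\ell)})$, being symmetric in $\ell$, is expressible as a polynomial in the $O_n(\mathbb{K})$-invariants of the $X(k)$ and hence in their entries. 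This produces an explicit candidate $\tilde N\in\mathbb{K}[\mathfrak C^d_{\mathfrak g}][T_0][[T_{\vec i}]]$.

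The main obstacle is to promote this generic identity into a universal identity $\tilde N^2=\det(T_0^2I_n+M)$ that holds in $\mathbb{K}[\mathfrak C^d_{\mathfrak g}][T_0][[T_{\vec i}]]$, especially since the commuting scheme may fail to be reduced and pointwise verification at generic points does not suffice by itself. I expect Lemma~\ref{lemma:pfaffian solution} to play the decisive role here: the generic Pfaffian configuration it produces should furnish a concrete non-degenerate point at which the polynomial identity can be verified directly, after which it extends to all of $\mathbb{K}[\mathfrak g^d]$ by polynomial continuation and descends to the quotient by the commutator ideal.
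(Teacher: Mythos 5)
Your reduction steps only establish the identity \emph{after specialization} (either $T_{\vec i}\mapsto Q_{\vec i}(U)$, or restriction to tuples conjugate into $\mathfrak t$), and the descent back to the free variables over $\mathbb{K}[\mathfrak{C}^d_{\mathfrak{g}}]$ --- which is the entire content of the lemma --- is exactly where your sketch breaks down. Concretely: (a) the density claim is false in general: for $d\geq 3$ the commuting variety $\mathfrak{C}^d_{\mathfrak{g},red}$ need not be irreducible, and tuples that can be simultaneously conjugated into the $2\times 2$-block Cartan form are dense only in the closure of $G\cdot\mathfrak t^d$, so verification there says nothing about the other components, let alone about the non-reduced structure of $\mathfrak{C}^d_{\mathfrak{g}}$; (b) your candidate $\tilde N$ is never actually exhibited as an element of $\mathbb{K}[\mathfrak{C}^d_{\mathfrak{g}}][T_0][[T_{\vec i}]]$: asserting that the symmetric functions of the $\mu^{(\ell)}$ are ``expressible as a polynomial in the $O_n(\mathbb{K})$-invariants of the $X(k)$'' on the commuting scheme is essentially the surjectivity statement the paper is in the business of proving, and even granting it, the lift of each coefficient is only determined up to elements vanishing on the locus where you verified the identity --- precisely the ambiguity you cannot control; (c) ``extends to all of $\mathbb{K}[\mathfrak g^d]$ by polynomial continuation and descends to the quotient'' cannot work, because the identity is \emph{false} before passing to the quotient by the commutator ideal (for non-commuting skew-symmetric generic matrices $\det(T_0^2I_n+\sum_{\vec i\in S}X^{\vec i}T_{\vec i})$ is not a square), and since $\mathbb{K}[\mathfrak{C}^d_{\mathfrak{g}}]$ may be non-reduced, checking the identity at closed points of the commuting variety --- even at all of them --- does not imply it in the ring.

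You have also misidentified the role of Lemma~\ref{lemma:pfaffian solution}. It is not used to produce a ``generic point for verification''; it supplies a constant skew-symmetric matrix $T\in M_{2d+3}(\mathbb{K})$ with $h_{2d+2}=0$, $h_{2d+3}\det A\neq 0$, which is the input for an explicit universal construction: after writing $\sum_{\vec i\in S}X^{\vec i}T_{\vec i}=\sum_{j}X(j)F(j)$, one builds from $T$ a $(2d+4)\times(2d+4)$ skew-symmetric scalar model $M(T)$ in auxiliary variables $y,x_0,x_1,\dots,x_{2d}$ whose determinant is $y^2\bigl(x_0^2+q(x)\bigr)+y^3(\cdots)$ with $q$ a nondegenerate quadratic form; putting $q$ into hyperbolic form and substituting $x_0\mapsto T_0I_n$, $y_{2i-1}\mapsto X(i)$, $y_{2i}\mapsto F(i)$ yields a skew-symmetric $(2d+4)n\times(2d+4)n$ matrix $\tilde M$ over $\mathbb{K}[\mathfrak{C}^d_{\mathfrak{g}}][y,T_0][[T_{\vec i}]]$, and $\tilde N$ is obtained from ${\rm Pf}\,\tilde M$ by dividing by $y^n$ (Lemma~\ref{lemma:f^2=y^2n g implies f=y^n h}) and setting $y=0$. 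Because $\tilde N$ is produced as a Pfaffian of an explicit matrix with entries in the ring itself, the identity $\tilde N^2=\det(T_0^2I_n+\sum X^{\vec i}T_{\vec i})$ holds universally, with no appeal to genericity, reducedness, or irreducibility --- this is the ingredient your proposal is missing.
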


\begin{proof}
For ease of notation, set $R=\mathbb{K}[\mathfrak{C}^d_{\mathfrak{g}}]$. 
Note to begin with that
\begin{displaymath}
T_0^2 I_n+\sum\limits_{(i_1,\cdots, i_d)\in S} X(1)^{i_1}X(2)^{i_2}\cdots X(d)^{i_d}T_{i_1\cdots i_d}=T_0^2 I_n+\sum\limits_{j=1}^d X(j)F(j),
\end{displaymath}
where 
\begin{displaymath}
F(j)=\sum\limits_{(0,\cdots,0,i_j,\cdots, i_d)\in S, i_j\geq 1}X(j)^{i_j-1}X(j+1)^{i_{j+1}}\cdots X(d)^{i_d}T_{0\cdots0i_j\cdots i_d}.
\end{displaymath}

By Lemma \ref{lemma:pfaffian solution}, we can take a skew symmetric matrix $T\in M_{2d+3}(\mathbb{K})$ such that $h_{2d+2}=0$ and $h_{2d+3}\det A\neq 0$. Here recall $h_{i_1,\cdots, i_k}=\mathrm{Pf}\ T(\hat{i}_1,\cdots, \hat{i}_k)$, and $A\in M_{2d}(\mathbb{K})$ whose $(i,j)$-entry is $a_{ij}:=h_{i,j,2d+2}^2+h_{i,2d+1,2d+2}^2+h_{j,2d+1,2d+2}^2$. By scaling $T$ if necessary, we can assume $h_{2d+3}=1$.

Over the polynomial ring $\mathbb{K}[y,x_0, x_i| 1\leq i\leq 2d]$, let $v:=(0,\cdots, 0, 1, yx_0)$ be the $2d+3$-tuple. Define the $(2d+3)\times (2d+3)$ matrix $B(T)$ by
\begin{displaymath}
B(T):=\mathrm{diag }(yx_1, yx_2,\cdots,  yx_{2d}, \sum\limits_{i=1}^{2d}yx_i,0,0)+T,
\end{displaymath}
 and the $(2d+4)\times (2d+4)$ matrix $M(T)$ by
\begin{equation}\notag 
 M(T):=\left (\begin{array}{c|c}
 B(T)& v^{t}\\ 
 \hline 
 -v&0  
 \end{array}\right ).
 \end{equation} 
By Lemma \ref{lemma:expansion of pfaffian} below, 
 \begin{displaymath}
 \begin{split}
 &\det M(T)=(h_{2d+2}-h_{2d+3} \ yx_0)^2+\sum\limits_{i=1}^{2d}y^2x_i^2(h_{i,2d+1,2d+2}-h_{i,2d+1,2d+3} yx_0)^2\\ 
 &+\sum\limits_{1\leq i<j\leq 2d}y^2x_i x_j[(h_{i,j,2d+2}-h_{i,j,2d+3}yx_0)^2+(h_{i,2d+1,2d+2}-h_{i,2d+1,2d+3}yx_0)^2 \\ 
 &+(h_{j,2d+1,2d+2}-h_{j,2d+1,2d+3}yx_0)^2]+y^4 \ g,
 \end{split}
 \end{displaymath}
 where $g\in \mathbb{K}[y,x_0, x_i| 1\leq i\leq 2d]$ and $g\in (x_1,\cdots, x_{2d})$. Since $h_{2d+2}=0$, $h_{2d+3}=1$, and $h_{i,j,2d+2}^2+h_{i,2d+1,2d+2}^2+h_{j,2d+1,2d+2}^2=a_{ij}$, the above expression can be simplified as 
 \begin{equation}\label{equ:def of M(T)}
 \det M(T)=y^2(x_0^2+\frac{1}{2}\sum\limits_{i=1 }^{2d}\sum\limits_{j=1}^{2d}a_{ij}x_i x_j)+y^3 \ g_1,
 \end{equation}
 where $g_1\in \mathbb{K}[y,x_0, x_i| 1\leq i\leq 2d]$ and  $g_1\in (x_1,\cdots, x_{2d})$. Since $A=(a_{ij})\in M_{2d}(\mathbb{K})$ is symmetric  with $\det A\neq 0$, there exists an invertible  $P\in M_{2d}(\mathbb{K})$ such that if we let $(y_1,\cdots, y_{2d})=(x_1,\cdots, x_{2d})\cdot P$, then 
 \[\frac{1}{2}\sum\limits_{i=1 }^{2d}\sum\limits_{j=1}^{2d}a_{ij}x_i x_j=\sum\limits_{i=1}^d y_{2i-1} y_{2i}=y_1y_2+y_3y_4+\cdots +y_{2d-1}y_{2d}.\] 
 In the matrix $M(T)$, we replace $x_0$ by $T_0I_n $, $y_{2i-1}$ by the skew symmetric matrix $X(i)$, $y_{2i}$ by $F(i)$,\ $1\leq i\leq d$, and any constant number $a\in \mathbb{K}$ by $a I_n$. In this way, we obtain a $(2d+4)n\times (2d+4)n$ matrix $\tilde{M}$ over $R[y, T_0][[ T_{i_1 \cdots i_d} | (i_1,\cdots, i_d)\in S]]$. Note $\tilde{M}$ is skew symmetric.  Define 
$
\tilde{Q}_y:={\rm Pf}\ \tilde{M}\in R[y, T_0][[ T_{i_1 \cdots i_d} | (i_1,\cdots, i_d)\in S]].
$
Then  by (\ref{equ:def of M(T)}) and  Lemma \ref{lemma:determinant of a block matrix}, 
\begin{equation}\label{equ:Q_y^2}
\tilde{Q}_y^2=\det \ \tilde{M}=y^{2n}\det (T_0^2 I_n+\sum\limits_{i=1}^d X(i)F(i)+y \ g_2),
\end{equation}
where $g_2$ is an $n\times n$ matrix over  $ R[y, T_0][[ T_{i_1 \cdots i_d} | (i_1,\cdots, i_d)\in S]]$. Note $R=\oplus_{i=0}^{\infty}R_i$ is a graded $\mathbb{K}$-algebra with $R_0=\mathbb{K}$, and the entries of $X(i)$ are all in $R_1$. Since $g_1\in (x_1,\cdots, x_{2d})$, we can  see 
\[\det (T_0^2 I_n+\sum\limits_{i=1}^d X(i)F(i)+y \ g_2)-T_0^{2n}\in \oplus_{i=1}^{\infty}R_i[y, T_0][[ T_{i_1 \cdots i_d} | (i_1,\cdots, i_d)\in S]].\]  
Now in $R[[y, T_0, T_{i_1 \cdots i_d} | (i_1,\cdots, i_d)\in S]]$, we can apply Lemma \ref{lemma:f^2=y^2n g implies f=y^n h} to the equality (\ref{equ:Q_y^2})   and obtain $\tilde{N}_y\in R[[y, T_0, T_{i_1 \cdots i_d} | (i_1,\cdots, i_d)\in S]]$ satisfying $\tilde{Q}_y=y^n \tilde{N}_y$. Moreover, $\tilde{N}_y\in R[y, T_0][[ T_{i_1 \cdots i_d} | (i_1,\cdots, i_d)\in S]]$ since $\tilde{Q}_y\in R[y, T_0][[ T_{i_1 \cdots i_d} | (i_1,\cdots, i_d)\in S]]$. By (\ref{equ:Q_y^2}), we see
\begin{equation}\label{equ:tilde N^2}
\tilde{N}_y^2=\det (T_0^2 I_n+\sum\limits_{i=1}^d X(i)F(i)+y \ g_2).
\end{equation}
Now let $\tilde{N}\in R[T_0][[ T_{i_1 \cdots i_d} | (i_1,\cdots, i_d)\in S]]$ be the evaluation of $\tilde{N}_y$ at $y=0$, we deduce from (\ref{equ:tilde N^2}) the desired equation:
\begin{displaymath}
\begin{split}
\tilde{N}^2&=\det (T_0^2 I_n+\sum\limits_{i=1}^d X(i)F(i))\\
&=\det (T_0^2 I_n+\sum\limits_{(i_1,\cdots, i_d)\in S} X(1)^{i_1}X(2)^{i_2}\cdots X(d)^{i_d}T_{i_1\cdots i_d}).
\end{split}
\end{displaymath}
\end{proof}

Now we begin to complete the proof of   Lemma \ref{lemma:deg of sqrt F_g}. Still let $R=\mathbb{K}[\mathfrak{C}^d_{\mathfrak{g}}]$.

By Lemma \ref{lemma:existence of tildeN}, we can find  $\tilde{N}\in R[T_0][[ T_{i_1 \cdots i_d} | (i_1,\cdots, i_d)\in S]]$
satisfying:
\begin{equation}\label{equ:tilde N^2= det M}
\tilde{N}^2=\det (T_0^2 I_n+\sum\limits_{(i_1,\cdots, i_d)\in S} X(1)^{i_1}X(2)^{i_2}\cdots X(d)^{i_d}T_{i_1\cdots i_d}).
\end{equation} 
Let $\tilde{N}(\underline{0})\in R[T_0]$ be the  valuation of $\tilde{N}$ at $T_{i_1\cdots i_d}=0$, \ $(i_1,\cdots, i_d)\in S$. Then $\tilde{N}(\underline{0})^2=\det (T_0^2 I_n)= T_0^{2n}$. Recall that $R=\oplus_{i=0}^{\infty}R_i$ is a graded $\mathbb{K}$-algebra with $R_0=\mathbb{K}$.  By Lemma \ref{lemma:f^2 determines f}, we get $\tilde{N}(\underline{0})=\pm T_0^n$. We assume without loss of generality that $\tilde{N}(\underline{0})= T_0^n$.

Let $N \in R[[ T_{i_1 \cdots i_d} | (i_1,\cdots, i_d)\in S]]$ be the evaluation of $\tilde{N}$ at $T_0=1$. Obviously the constant coefficient of $N$ is $1$, and  
\begin{displaymath}
N^2=\det (1+\sum\limits_{(i_1,\cdots, i_d)\in S} X(1)^{i_1}X(2)^{i_2}\cdots X(d)^{i_d}T_{i_1\cdots i_d})=F_{\mathfrak{g}}.
\end{displaymath}
 By applying Lemma \ref{lemma:f^2 determines f} to $R[[ T_{i_1 \cdots i_d} | (i_1,\cdots, i_d)\in S]]$, we get $\sqrt{F_{\mathfrak{g}}}=N$. 

For any non-zero $\lambda\in \mathbb{K}^*$, 
let $\tilde{N}_{\lambda} \in R[T_0][[ T_{i_1 \cdots i_d} | (i_1,\cdots, i_d)\in S{}]]$ be the image of $\tilde{N}$ under the automorphism of $R$-algebras:  
\begin{displaymath}
\begin{split}
R[T_0][[ T_{i_1 \cdots i_d} | (i_1,\cdots, i_d)\in S{}]] & \rightarrow R[T_0][[ T_{i_1 \cdots i_d} | (i_1,\cdots, i_d)\in S]]\\
 T_0&\mapsto \lambda T_0 \\ 
T_{i_1 \cdots i_d}&\mapsto \lambda^2 T_{i_1 \cdots i_d}
\end{split}
\end{displaymath}
  By (\ref{equ:tilde N^2= det M}), $\tilde{N}_{\lambda}^2=\lambda^{2n} \tilde{N}^2=(\lambda^n \tilde{N})^2$.  Since $\tilde{N}(\underline{0})= T_0^n$, we see $\tilde{N}_{\lambda }(\underline{0})=\lambda ^n \tilde{N}(\underline{0})=\lambda ^n T_0^n$ is not a zero-divisor in $R[T_0][[ T_{i_1 \cdots i_d} | (i_1,\cdots, i_d)\in S{}]]$.  Then $\tilde{N}_{\lambda }=\lambda^n \tilde{N}$ by Lemma \ref{lemma:f^2 determines f}. From this we deduce that, by requiring $\deg T_0=1$ and  $\deg T_{i_1\cdots i_d}=2$ for all $(i_1,\cdots, i_d)\in S$, $\tilde{N}$ is a degree $n$ weighted homogeneous formal power series with respect to the variables $\{T_0, T_{i_1\cdots i_d}| (i_1,\cdots, i_d)\in S\}$. Thus $\deg \sqrt{F_{\mathfrak{g}}}=\deg N\leq \lfloor \frac{n}{2}\rfloor$, and this completes the proof of Lemma \ref{lemma:deg of sqrt F_g}.

\begin{lemma}\label{lemma:expansion of pfaffian}
Let $R$ be a $\mathbb{K}$-algebra.  Suppose  $n\geq 2$, and  $T=(t_{ij})\in M_{2n}(R)$ is a  skew symmetric matrix.  Over the polynomial ring $R[x_1,\cdots, x_{2n-3}]$, let 
\[M:=T+\mathrm{diag} (x_1,\cdots, x_{2n-3},0,0,0)\] 
be the sum of $T$ and a diagonal matrix.  Suppose $t_{i, 2n}=0$, for $i=1,\cdots, 2n-3$. Then the determinant $\det M$  has the following expansion as a polynomial in $x_1,\cdots, x_{2n-3}$:
\begin{displaymath}
\begin{split}
\det M&=(h_{2n-2,2n} ~t_{2n-2,2n}- h_{2n-1, 2n}~t_{2n-1,2n})^2\\ 
&+\sum\limits_{1\leq i<j\leq 2n-3}x_ix_j (h_{i,j,2n-2,2n}~t_{2n-2,2n}- h_{i,j,2n-1,2n}~t_{2n-1,2n})^2\\ 
&+ \sum\limits_{k=2}^{n-2}f_{2k}.
\end{split}
\end{displaymath}  
Here  for even $m$ and for  $1\leq i_1<i_2<\cdots <i_m\leq 2n$, the symbol $h_{i_1,\cdots, i_m}$ means ${\rm Pf}~ T(\hat{i}_1,\cdots, \hat{i}_m)$,  the Pfaffian of the matrix obtained from $T$ by deleting the $i_j$-th row and the $i_j$-th column for all $1\leq j\leq m$, and $f_{2k}$ means a degree $2k$ homogeneous polynomial in $x_1,\cdots, x_{2n-3}$. 
\end{lemma}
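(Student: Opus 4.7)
The plan is to expand $\det M$ by exploiting the multilinearity of the determinant in its rows. Writing the $i$-th row of $M$ as the sum of the $i$-th row of $T$ and $x_i e_i$ for $i=1,\dots,2n-3$ (the last three rows of $M$ agree with those of $T$), one obtains by a standard bookkeeping argument
\[\det M = \sum_{S\subseteq\{1,\dots,2n-3\}} \Bigl(\prod_{i\in S}x_i\Bigr)\,\det T(\hat S),\]
where $T(\hat S)$ denotes the principal submatrix of $T$ obtained by deleting the rows and columns indexed by $S$. Since $T(\hat S)$ is skew-symmetric of size $2n-|S|$, its determinant vanishes whenever $|S|$ is odd and equals $h_S^2$ whenever $|S|$ is even.

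Next I would apply the Pfaffian row-expansion formula along the $2n$-th row/column to each relevant $h_S$ (with $S\subseteq\{1,\dots,2n-3\}$ of even size). The hypothesis $t_{i,2n}=0$ for $i\le 2n-3$, together with the fact that $2n-2,2n-1\notin S$, forces all terms in the expansion to vanish except those coming from the indices $2n-2$ and $2n-1$. This yields
\[h_S = \varepsilon_S\bigl(t_{2n-2,2n}\,h_{S\cup\{2n-2,2n\}} - t_{2n-1,2n}\,h_{S\cup\{2n-1,2n\}}\bigr)\]
for some sign $\varepsilon_S\in\{\pm1\}$ dictated by the standard sign convention. Squaring kills the sign, producing precisely the square of the expression appearing in the statement.

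Finally I would collect the resulting sum according to $|S|$. The case $S=\emptyset$ contributes the first summand $(h_{2n-2,2n}t_{2n-2,2n}-h_{2n-1,2n}t_{2n-1,2n})^2$; the cases $|S|=2$ indexed by pairs $1\le i<j\le 2n-3$ contribute the second summand; and all remaining subsets $S$ with $|S|=2k$, $k\ge 2$, produce monomials of degree $2k$ in $x_1,\dots,x_{2n-3}$ which group into homogeneous polynomials $f_{2k}$. The constraints that $|S|$ is even and $S\subseteq\{1,\dots,2n-3\}$ force $|S|\le 2n-4$, so $k$ ranges over $2,\dots,n-2$, matching the statement.

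The only mildly technical point is keeping track of the sign $\varepsilon_S$ in the Pfaffian row-expansion; however, since the final expression is squared, this sign is ultimately irrelevant, so no substantive obstacle arises.
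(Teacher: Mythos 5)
Your proposal is correct and follows essentially the same route as the paper: expand $\det M$ over the diagonal entries (your multilinearity-in-rows bookkeeping is exactly that expansion), discard the odd-size subsets since odd-order skew-symmetric determinants vanish, write the even-size principal minors as squares of Pfaffians, and reduce those Pfaffians via the Cayley expansion along the last column, where $t_{i,2n}=0$ for $i\le 2n-3$ leaves only the $2n-2$ and $2n-1$ terms. Your observation that the sign $\varepsilon_S$ is harmless after squaring is the same point handled in the paper by the explicit signs $(-1)^{i+1}$ in the Pfaffian expansion.
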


\begin{proof}
By expanding $\det M$ along the diagonal, we obtain
\begin{equation}\notag
\det M=\det T+\sum\limits_{m=1}^{2n-3}\sum\limits_{1\leq i_1<i_2<\cdots<i_{m}\leq 2n-3}\det T(\hat{i}_1,\cdots, \hat{i}_{m})\ x_{i_1}x_{i_2}\cdots x_{i_{m}}.
\end{equation}

Note when $m$ is odd, $\det T(\hat{i}_1,\cdots, \hat{i}_{m})=0$ because $T(\hat{i}_1,\cdots, \hat{i}_{m})$ is a skew symmetric matrix of odd order.  So the above expansion reduces to

\begin{equation}\label{equ:expansion of f along diagonal}
\det M=\det T+\sum\limits_{k=1}^{2n-4}\sum\limits_{i_1,\cdots, i_{2k}}\det T(\hat{i}_1,\cdots, \hat{i}_{2k})\ x_{i_1}x_{i_2}\cdots x_{i_{2k}},
\end{equation}
where the sum $\sum\limits_{i_1,\cdots, i_{2k}}$ is over
\[\{{(i_1,\cdots, i_{2k})| \ 1\leq i_1<i_2<\cdots <i_{2k}\leq 2n-3}\}.\]

 For a $2m\times 2m$ skew symmetric matrix $A=(a_{ij})$, the Pfaffian ${\rm Pf}~ A$ can be expanded with respect to the last column (cf. Cayley [\ref{Cayley}]) as follows:
\[{\rm Pf}~ A=\sum\limits_{i=1}^{2m-1}(-1)^{i+1} {\rm Pf}~ A(\hat{i},\widehat{2m})~ a_{i, 2m}.\]

Applying this expansion to the skew symmetric matrix $T(\hat{i}_1,\cdots, \hat{i}_{2k})$,
we obtain 
\begin{displaymath}
\begin{split}
{\rm Pf}~ T(\hat{i}_1,\cdots, \hat{i}_{2k})=&-t_{2n-2,2n} {\rm Pf}~T(\hat{i}_1,\cdots, \hat{i}_{2k},\widehat{2n-2},\widehat{2n})\\ 
&+t_{2n-1,2n} {\rm Pf}~T(\hat{i}_1,\cdots, \hat{i}_{2k},\widehat{2n-1},\widehat{2n}).
\end{split}
\end{displaymath}

Combining this with \eqref{equ:expansion of f along diagonal}, we get the desired expansion of $\det M$.
\end{proof}

\section{Applications}\label{sec:corollaries}
In this section, we  apply the restriction isomorphism to obtain some identities about  polynomial functions on commuting skew symmetric matrices. 

From now on,  we suppose $\rm{char}\  \mathbb{K} =0$, and let $G=O_n(\mathbb{K})$ be the orthogonal group, so that its Lie algebra $\mathfrak{g}$ is the space of  $n\times n$ skew symmetric matrices.  We also fix the Cartan subalgebra 
\[\mathfrak{t}=\{\textmd{SK }(x_1,\cdots, x_{\lfloor\frac{n}{2}\rfloor})| x_i\in \mathbb{K}, \text{ for all } \ 1\leq i\leq \lfloor\frac{n}{2}\rfloor\}\]  
as in Section \ref{subsec:proof of lemma deg F_t}, where recall $\textmd{SK }(x_1,\cdots, x_{\lfloor\frac{n}{2}\rfloor})$ is the  $n\times n$ skew symmetric matrix defined in (\ref{equ:sk(x_1,...,x_m)}). Recall  $(X(1),\cdots, X(d))\in \mathfrak{C}^d_{\mathfrak{g}}(\mathbb{K}[\mathfrak{C}^d_{\mathfrak{g}}])$ is the tautological $\mathbb{K}[\mathfrak{C}^d_{\mathfrak{g}}]$-valued  point of $\mathfrak{C}^d_{\mathfrak{g}}$.  Under the restriction homomorphism $M_n( \mathbb{K}[\mathfrak{C}^d_{\mathfrak{g}}] )\rightarrow M_n(\mathbb{K}[\mathfrak{t}^d])$, the  skew symmetric  matrices $X(i)$ are mapped to $Y(i)$.

Let $R$ be a  $\mathbb{K}$-algebra.
\begin{corollary}
Suppose  $n\geq 3$ is odd and let $X_1,\cdots, X_d \in M_n(R)$ be commuting skew symmetric matrices. For any $f\in \mathbb{K}[x_1,\cdots, x_d] $, if $f(0,\cdots, 0)=0$, then $\det f(X_1,\cdots, X_d)=0$.   
\end{corollary}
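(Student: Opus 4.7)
\smallskip

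\noindent\textbf{Proof proposal.} The plan is to realize the identity universally on the commuting scheme and then transport it via the Chevalley restriction isomorphism to the Cartan side, where it is visible from the chosen realization of $\mathfrak{t}$.

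First, consider the tautological commuting skew symmetric matrices $X(1),\dots,X(d)\in M_n(\mathbb{K}[\mathfrak{C}^d_{\mathfrak{g}}])$ and form
\[
P:=\det f(X(1),\dots,X(d))\in \mathbb{K}[\mathfrak{C}^d_{\mathfrak{g}}].
\]
Since the determinant is conjugation invariant and $f$ is applied entrywise, $P$ lies in $\mathbb{K}[\mathfrak{C}^d_{\mathfrak{g}}]^G$. Moreover, because the restriction homomorphism $M_n(\mathbb{K}[\mathfrak{C}^d_{\mathfrak{g}}])\to M_n(\mathbb{K}[\mathfrak{t}^d])$ sends $X(i)$ to $Y(i)$ and respects polynomial operations, we have $\Phi(P)=\det f(Y(1),\dots,Y(d))$.

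Next I would compute $\Phi(P)$ using the explicit Cartan chosen in Section~\ref{subsec:proof of lemma deg F_t}: for $G=O_n(\mathbb{K})$ with $n$ odd, every element of $\mathfrak{t}$ has the form $\mathrm{SK}(x_1,\dots,x_m)$ with $m=(n-1)/2$, and such a matrix has its $n$-th row and column equal to zero. Hence each $Y(i)$ is of the block form $\mathrm{diag}(Y(i)',0)$ with $Y(i)'$ of size $(n-1)\times(n-1)$. The set of matrices with zero $n$-th row and column is closed under products and sums, so every monomial $Y(1)^{j_1}\cdots Y(d)^{j_d}$ of total degree $\ge 1$ has vanishing $n$-th row. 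Since $f(0,\dots,0)=0$, the polynomial $f(Y(1),\dots,Y(d))$ is a linear combination of such monomials, hence has zero $n$-th row, and therefore
\[
\Phi(P)=\det f(Y(1),\dots,Y(d))=0 \quad\text{in}\quad \mathbb{K}[\mathfrak{t}^d]^W.
\]

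By Theorem~\ref{main thm: char.  0}, $\Phi$ is an isomorphism, so $P=0$ in $\mathbb{K}[\mathfrak{C}^d_{\mathfrak{g}}]^G\subset \mathbb{K}[\mathfrak{C}^d_{\mathfrak{g}}]$. Finally, given commuting skew symmetric matrices $X_1,\dots,X_d\in M_n(R)$, the universal property of the commuting scheme provides a $\mathbb{K}$-algebra homomorphism $\mathbb{K}[\mathfrak{C}^d_{\mathfrak{g}}]\to R$ sending $X(i)\mapsto X_i$; applying it entrywise to the identity $P=0$ yields $\det f(X_1,\dots,X_d)=0$ in $R$.

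There is essentially no obstacle here: the Chevalley restriction theorem does all the heavy lifting, and the only substantive step is the elementary observation that the explicit model of $\mathfrak{t}$ for odd $n$ produces skew symmetric matrices with a zero row, which forces the vanishing on the Cartan side.
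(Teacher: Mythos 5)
Your proposal is correct and follows essentially the same route as the paper: pass to the tautological point of $\mathfrak{C}^d_{\mathfrak{g}}$, note that $\Phi(\det f(X(1),\dots,X(d)))=\det f(Y(1),\dots,Y(d))=0$ on the explicit Cartan, invoke the isomorphism of Theorem~\ref{main thm: char.  0}, and specialize to $R$. The only difference is that you spell out why the restriction vanishes (the $n$-th row of $\mathrm{SK}(x_1,\dots,x_m)$ is zero for odd $n$), a detail the paper leaves implicit.
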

\begin{proof}
Clearly there exists a $\mathbb{K}$-algebra homomorphism $\varphi: \mathbb{K}[\mathfrak{C}_{\mathfrak{g}}^d]\rightarrow R$ such that $\varphi $ maps the matrix  $X(i)$ to $ X_i$, \ $i=1,\cdots, d$.  Then $\det f(X_1,\cdots, X_d)=\varphi(\det f(X(1),\cdots, X(d)))$. Since $\det f(X(1),\cdots, X(d))\in \mathbb{K}[\mathfrak{C}_{\mathfrak{g}}^d]^G$, we can  apply  the restriction homomorphism  
\[\Phi: \mathbb{K}[\mathfrak{C}_{\mathfrak{g}}^d]^G\rightarrow \mathbb{K}[\mathfrak{t}^d]^W\] 
to obtain 
\begin{displaymath}
\Phi(\det f(X(1),\cdots, X(d)))=\det f(Y(1),\cdots, Y(d)).
\end{displaymath}
By the definition of $Y(i)$ we see $\det f(Y(1),\cdots, Y(d))=0$. Since $\Phi$ is an isomorphism by Theorem \ref{main thm: char.  0}, we obtain $\det f(X(1),\cdots, X(d))=0$ and hence $\det f(X_1,\cdots, X_d)=0$.
\end{proof}

\begin{corollary}\label{cor:multiplicative of Pfaffian}
Suppose $n\geq 2$ is even. Let $X_1, X_2, X_3\in M_n(R)$ be commuting skew symmetric matrices. Then 
\begin{displaymath}
\mathrm{Pf }(X_1X_2X_3)=(-1)^{\frac{n}{2}}\mathrm{Pf }(X_1) \mathrm{Pf }(X_2) \mathrm{Pf }(X_3).
\end{displaymath}
\end{corollary}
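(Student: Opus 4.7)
The plan is to apply the restriction isomorphism $\Phi \colon \mathbb{K}[\mathfrak{C}^3_{\mathfrak{g}}]^{G'} \to \mathbb{K}[\mathfrak{t}^3]^{W'}$ of Theorem~\ref{thm:main thm in the n even G=SO_n case} with $d = 3$ and $G' = SO_n(\mathbb{K})$. First, I would reduce to the universal case: any triple of commuting skew-symmetric matrices $(X_1, X_2, X_3) \in M_n(R)^3$ determines a unique $\mathbb{K}$-algebra homomorphism $\varphi \colon \mathbb{K}[\mathfrak{C}^3_{\mathfrak{g}}] \to R$ sending $X(i) \mapsto X_i$, and since both sides of the claimed identity are polynomial expressions in matrix entries, it suffices to establish the identity for the tautological triple $(X(1), X(2), X(3))$.

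Next I would consider the difference
\[
f := \mathrm{Pf}\bigl(X(1)X(2)X(3)\bigr) - (-1)^{n/2}\,\mathrm{Pf}\bigl(X(1)\bigr)\mathrm{Pf}\bigl(X(2)\bigr)\mathrm{Pf}\bigl(X(3)\bigr) \in \mathbb{K}[\mathfrak{C}^3_{\mathfrak{g}}].
\]
A short check using pairwise commutativity gives $(X(1)X(2)X(3))^t = -X(3)X(2)X(1) = -X(1)X(2)X(3)$, so the first Pfaffian is well defined. Under $g \in G = O_n(\mathbb{K})$ the matrix $X(i)$ transforms as $g X(i) g^t$, and hence each Pfaffian picks up a factor of $\det g$; since $(\det g)^3 = \det g$ for orthogonal $g$, both terms of $f$ transform by the same character $\det g$. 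In particular $f \in \mathbb{K}[\mathfrak{C}^3_{\mathfrak{g}}]^{G'}$, so by the injectivity of $\Phi$ in Theorem~\ref{thm:main thm in the n even G=SO_n case} it is enough to show $\Phi(f) = 0$.

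Finally I would verify $\Phi(f) = 0$ by direct block-diagonal computation on the distinguished Cartan $\mathfrak{t}$ consisting of matrices $\mathrm{SK}(x_1,\ldots,x_{n/2})$ from Section~\ref{subsec:proof of lemma deg F_t}. For $Y_i = \mathrm{SK}(y_{i,1},\ldots,y_{i,n/2})$, a $2 \times 2$ block calculation shows that consecutive pairs of these matrices multiply to scalar blocks, giving
\[
Y_1 Y_2 Y_3 = \mathrm{SK}\bigl(y_{1,1}y_{2,1}y_{3,1},\ \ldots,\ y_{1,n/2}y_{2,n/2}y_{3,n/2}\bigr),
\]
while $\mathrm{Pf}(Y_i) = (\sqrt{-1})^{n/2}\prod_k y_{i,k}$. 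Comparing powers of $\sqrt{-1}$ via $(\sqrt{-1})^{n/2}/(\sqrt{-1})^{3n/2} = (\sqrt{-1})^{-n} = (-1)^{n/2}$ then yields the identity on $\mathfrak{t}^3$. There is no serious obstacle here: the deep content resides in Theorem~\ref{thm:main thm in the n even G=SO_n case}, and what remains is the observation that $f$ sits in the correct $G'$-isotypic component together with a routine block-diagonal Pfaffian computation on the Cartan.
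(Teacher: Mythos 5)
Your proposal is correct and follows essentially the same route as the paper's proof: reduce to the tautological triple $(X(1),X(2),X(3))$ via the evaluation homomorphism, observe that the difference of the two Pfaffian expressions lies in $\mathbb{K}[\mathfrak{C}^3_{\mathfrak{g}}]^{G'}$, and conclude from the injectivity of $\Phi$ in Theorem \ref{thm:main thm in the n even G=SO_n case} together with the block-diagonal computation on the Cartan $\mathfrak{t}$. You merely spell out the equivariance and Cartan calculations that the paper leaves as ``direct computations,'' and your verification of these details (including the sign $(\sqrt{-1})^{-n}=(-1)^{n/2}$) is accurate.
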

\begin{proof}
The  case $n=2$ is trivial, so we assume $n\geq 4$ and take $d=3$. Let $G ^{\prime}=SO_n(\mathbb{K})$ be the special orthogonal group, which has the same Lie algebra as $G=O_n(\mathbb{K})$. 
There exists a $\mathbb{K}$-algebra homomorphism $\varphi: \mathbb{K}[\mathfrak{C}_{\mathfrak{g}}^d]\rightarrow R$ such that $\varphi(X(i))=X_i$, \ $i=1,2,3$. Then 
\begin{displaymath}
\begin{split}
&\mathrm{Pf }(X_1X_2X_3)-(-1)^{\frac{n}{2}}\mathrm{Pf }(X_1) \mathrm{Pf }(X_2) \mathrm{Pf }(X_3)\\ 
&=\varphi (\mathrm{Pf }(X(1)X(2)X(3))-(-1)^{\frac{n}{2}}\mathrm{Pf }(X(1)) \mathrm{Pf }(X(2)) \mathrm{Pf }(X(3))).
\end{split}
\end{displaymath}
Let $r=\mathrm{Pf }(X(1)X(2)X(3))-(-1)^{\frac{n}{2}}\mathrm{Pf }(X(1)) \mathrm{Pf }(X(2)) \mathrm{Pf }(X(3))$, we see $r\in \mathbb{K}[\mathfrak{C}_{\mathfrak{g}}^d]^{G^{\prime}}$. Then we apply  the  restriction homomorphism  $\Phi:\mathbb{K}[\mathfrak{C}_{\mathfrak{g}}^d]^{G^{\prime}} \rightarrow \mathbb{K}[\mathfrak{t}^d]^{W^{\prime}}$ to obtain 
\begin{displaymath}
\Phi(r)=\mathrm{Pf }(Y(1)Y(2)Y(3))-(-1)^{\frac{n}{2}}\mathrm{Pf }(Y(1)) \mathrm{Pf }(Y(2)) \mathrm{Pf }(Y(3)).
\end{displaymath}
By direct computations, we can see $\Phi(r)=0$. Since $\Phi$ is an isomorphism by Theorem \ref{thm:main thm in the n even G=SO_n case}, we get  finally $r=0$.
\end{proof}

For a positive integer $m$, write $\mathcal{P}_m$ for the set of partitions $\lambda=\lambda_1\cup\cdots  \cup\lambda_h$ of the set $\{1,\cdots, m\}$ into the disjoint union of non-empty subsets $\lambda_i$, and denote $h(\lambda)=h$ the number of parts of the partition $\lambda$.  

\begin{corollary}\label{cor:trace identity}
Suppose $n\geq 2$, $d\geq 1$. Let $m=\lfloor \frac{n}{2}\rfloor+1$. Suppose  $X_1, \cdots, X_d\in M_n(R)$ are commuting skew symmetric matrices. For $j=1,\cdots, m$, let $Y_j=\prod_{i=1}^d X_i^{a_{ij}}\in M_n(R)$ be a monomial of $X_1,\cdots, X_d$, with $\sum_{i=1}^d a_{ij}>0$ even.  Then 
\begin{displaymath}
\sum\limits_{\lambda\in \mathcal{P}_{m}}(\frac{-1}{2})^{h(\lambda)}\prod\limits_{i=1}^{h(\lambda)}
((|\lambda_i|-1)! \cdot \mathrm{ tr }\prod \limits_{s\in \lambda_i} Y_s)=0.
\end{displaymath}
\end{corollary}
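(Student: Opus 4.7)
The plan is to read the identity off a single coefficient of $\sqrt{F_{\mathfrak{g}}}$ that must vanish because its total $T$-degree exceeds $\lfloor n/2\rfloor$; the required degree bound is precisely Lemma \ref{lemma:deg of sqrt F_g}. Since $X_1,\ldots,X_d$ pairwise commute in $M_n(R)$, there is a $\mathbb{K}$-algebra homomorphism $\varphi:\mathbb{K}[\mathfrak{C}^d_{\mathfrak{g}}]\to R$ with $\varphi(X(i))=X_i$, so it suffices to prove the universal identity in which each $X_i$ is replaced by the tautological matrix $X(i)$. Put $\mathcal{Y}_j:=\prod_{i=1}^d X(i)^{a_{ij}}\in M_n(\mathbb{K}[\mathfrak{C}^d_{\mathfrak{g}}])$; these matrices commute pairwise.

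First I would introduce fresh variables $T_1,\ldots,T_m$ and specialize $F_{\mathfrak{g}}$ by the substitution
\[T_I\;\longmapsto\;\sum_{\substack{1\le j\le m\\ (a_{1j},\ldots,a_{dj})=I}}T_j\qquad(I\in S),\]
with the remaining $T_I$ sent to $0$ (the hypothesis that each $\sum_i a_{ij}$ is positive and even guarantees that the relevant multi-indices lie in $S$). Under this specialization $F_{\mathfrak{g}}$ becomes
\[F:=\det\Bigl(I_n+\sum_{j=1}^m\mathcal{Y}_jT_j\Bigr)\in\mathbb{K}[\mathfrak{C}^d_{\mathfrak{g}}]^G[T_1,\ldots,T_m].\]
By uniqueness of the square root with constant coefficient $1$ (Lemma \ref{lemma:f^2 determines f}), the specialization of $\sqrt{F_{\mathfrak{g}}}$ is $\sqrt{F}$, and by Lemma \ref{lemma:deg of sqrt F_g} it has total $T$-degree at most $\lfloor n/2\rfloor=m-1$. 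In particular, the coefficient of the squarefree monomial $T_1T_2\cdots T_m$ in $\sqrt{F}$ is zero.

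It remains to identify this coefficient with the sum appearing in the corollary. Writing $\sqrt{F}=\exp\bigl(\tfrac12\mathrm{tr}\log(I_n+A)\bigr)$ for $A=\sum_j\mathcal{Y}_jT_j$ and expanding $\log(1+x)=\sum_{k\ge1}(-1)^{k+1}x^k/k$, the commutativity of the $\mathcal{Y}_j$'s shows that for every non-empty $\lambda\subseteq\{1,\ldots,m\}$,
\[[\textstyle\prod_{s\in\lambda}T_s]\,\mathrm{tr}\log(I_n+A)=(-1)^{|\lambda|+1}(|\lambda|-1)!\,\mathrm{tr}\Bigl(\prod_{s\in\lambda}\mathcal{Y}_s\Bigr).\]
The exponential formula for formal power series then gives
\[[T_1\cdots T_m]\,\sqrt{F}=\sum_{\lambda\in\mathcal{P}_m}\prod_{i=1}^{h(\lambda)}\frac{(-1)^{|\lambda_i|+1}(|\lambda_i|-1)!\,\mathrm{tr}\bigl(\prod_{s\in\lambda_i}\mathcal{Y}_s\bigr)}{2}.\]
Since $\sum_i(|\lambda_i|+1)=m+h(\lambda)$, the global sign factors out as $(-1)^m$; setting the above expression equal to zero, dividing by $(-1)^m$, and applying $\varphi$ yields the claimed identity.

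The essential geometric input is the degree bound of Lemma \ref{lemma:deg of sqrt F_g}; the remainder is formal. The main (minor) obstacle will be carefully tracking signs and combinatorial factors through the exponential–logarithm expansion, a standard cumulant-type computation that is available because $\mathrm{char}\,\mathbb{K}=0$.
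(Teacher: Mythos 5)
Your proof is correct, but it follows a genuinely different route from the paper. The paper's own argument is a two-line reduction: since the expression in question is a $G$-invariant element of $\mathbb{K}[\mathfrak{C}^d_{\mathfrak{g}}]$, the injectivity of the restriction isomorphism $\Phi$ (Theorem \ref{main thm: char.  0}) reduces the identity to the case $R=\mathbb{K}[\mathfrak{t}^d]$, $X_i=Y(i)$, where it is quoted as a reformulation of a known multisymmetric-function identity of Domokos. You instead bypass both the restriction theorem and the external reference: after passing to the tautological matrices, you attach fresh variables $T_1,\dots,T_m$ to the monomials $\mathcal{Y}_j$, observe that the resulting $\sqrt{F}$ is a specialization of $\sqrt{F_{\mathfrak{g}}}$ (uniqueness of square roots, Lemma \ref{lemma:f^2 determines f}) and hence has degree at most $\lfloor n/2\rfloor=m-1$ by Lemma \ref{lemma:deg of sqrt F_g}, so the coefficient of $T_1\cdots T_m$ vanishes; the exp--log (cumulant) computation of that coefficient, valid since $\mathrm{char}\,\mathbb{K}=0$ and the $\mathcal{Y}_j$ commute, is carried out correctly, including the sign bookkeeping $\prod_i(-1)^{|\lambda_i|+1}=(-1)^{m+h(\lambda)}$, and the specialization trick cleanly handles repeated exponent vectors. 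What each approach buys: yours is self-contained modulo the technical degree bound (it does not even use the isomorphism $\Phi$, only Lemma \ref{lemma:deg of sqrt F_g}) and in effect reproves the Domokos identity on the way; the paper's is shorter given the machinery already in place and makes transparent that the corollary is equivalent, via the Chevalley restriction isomorphism, to the corresponding identity for the diagonal (torus) case.
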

\begin{proof}
In a similar way as above, it suffices  to verify the identity under the assumption   $R=\mathbb{K}[\mathfrak{t}^d]$ and  $X_i=Y(i)$, \ $i=1,\cdots, d$. Then the required trace identity is just a reformulation of Domokos \cite[Proposition 2.3]{Domokos}.
\end{proof}

\begin{example}
As an illustration, we take $n=4$, so $m=3$. All of the partitions of $\{1,2,3\}$ are:
\begin{displaymath}
\{1,2,3\}, \ \{1,2\}\cup \{3\}, \ \{1,3\}\cup \{2\}, \ \{2,3\}\cup \{1\}, \ \{1\}\cup \{2\}\cup\{3\}.
\end{displaymath}
Then according to Corollary \ref{cor:trace identity}, for any $d\geq 1$ commuting skew symmetric $4\times 4$ matrices $X_1,\cdots, X_d\in M_4(R)$, and for  $Y_1, Y_2, Y_3$ which are monomials of $X_i$ of even degree, we have the following trace identity:
\begin{eqnarray*}
&2\rm{tr}(Y_1) \rm{tr}(Y_2Y_3)+2\rm{tr}(Y_2 )\rm{tr}(Y_1Y_3)+2\rm{tr}(Y_3)\rm{tr}(Y_1Y_2)\\
&=8\rm{tr}(Y_1Y_2Y_3)+\rm{tr}(Y_1) tr(Y_2) tr(Y_3).
\end{eqnarray*}
To be more specific, let $Y_i=X_i^2$, \ $i=1,2,3$, then this identity reduces to:
\begin{eqnarray*}
&2\rm{tr}(X_1^2) tr(X^2_2X^2_3)+2\rm{tr}(X^2_2) tr(X^2_1X^2_3)+2\rm{tr}(X^2_3) tr(X^2_1X^2_2)\\ 
&=8\rm{tr}(X^2_1X^2_2X^2_3)+\rm{tr}(X^2_1) tr(X^2_2) tr(X^2_3).
\end{eqnarray*}
\end{example}

\end{document}